\makeatletter \@addtoreset{equation}{section} \makeatother
\newcommand{\eref}[1]{(\ref{#1})}
\newcommand{\tref}[1]{Theorem \ref{#1}}
\newcommand{\lref}[1]{Lemma \ref{#1}}
\newcommand{\rref}[1]{Remark \ref{#1}}
\newcommand{\sref}[1]{Section \ref{#1}}
\newcommand{\jump}[1]{[#1]}
\newcommand{\vertiii}[1]{{\left\vert\kern-0.25ex\left\vert\kern-0.25ex\left\vert #1 \right\vert\kern-0.25ex\right\vert\kern-0.25ex\right\vert}}
\theoremstyle{plain} \newtheorem{thm}{Theorem}[section] \newtheorem{lem}{Lemma}[section]  \newtheorem{cor}{Corollary}[section]
\theoremstyle{definition} \newtheorem{rem}{Remark}[section]
\title[Finite element approximation for the Stokes equations]{Penalty method with Crouzeix--Raviart approximation for\\the Stokes equations under slip boundary condition}
\author[Takahito Kashiwabara]{Takahito Kashiwabara}
\address{The Graduate School of Mathematical Sciences, The University of Tokyo, 3-8-1 Komaba, Meguro, Tokyo 153-8914, Japan}
\email{tkashiwa@ms.u-tokyo.ac.jp}
\author[Issei Oikawa]{Issei Oikawa}
\address{Faculty of Science and Engineering, Waseda University, 3-4-1 Okubo, Shinjuku, Tokyo 169-8555, Japan}
\email{oikawa@aoni.waseda.jp}
\author[Guanyu Zhou]{Guanyu Zhou}
\address{Department of Applied Mathematics, Tokyo University of Science, 1-3 Kagurazaka, Shinjuku, Tokyo 162-8601}
\email{zhoug@rs.tus.ac.jp }
\date{\today}
\subjclass[2010]{Primary: 65N30; Secondary: 35Q30.}
\keywords{Nonconforming FEM; Stokes equations; Slip boundary condition; Domain perturbation; Discrete $H^{1/2}$-norm}
\thanks{This study was supported by JSPS Grant-in-Aid for Young Scientists B (17K14230, 17K14243) and by JSPS Grant-in-Aid for Early-Career Scientists (18K13460).}
\begin{document}
\begin{abstract}
	The Stokes equations subject to non-homogeneous slip boundary conditions are considered in a smooth domain $\Omega \subset \mathbb R^N \, (N=2,3)$.
	We propose a finite element scheme based on the nonconforming P1/P0 approximation (Crouzeix--Raviart approximation) combined with a penalty formulation and with reduced-order numerical integration in order to address the essential boundary condition $u \cdot n_{\partial\Omega} = g$ on $\partial\Omega$.
	Because the original domain $\Omega$ must be approximated by a polygonal (or polyhedral) domain $\Omega_h$ before applying the finite element method, we need to take into account the errors owing to the discrepancy $\Omega \neq \Omega_h$, that is, the issues of domain perturbation.
	In particular, the approximation of $n_{\partial\Omega}$ by $n_{\partial\Omega_h}$ makes it non-trivial whether we have a discrete counterpart of a lifting theorem, i.e., right-continuous inverse of the normal trace operator $H^1(\Omega)^N \to H^{1/2}(\partial\Omega)$; $u \mapsto u\cdot n_{\partial\Omega}$.
	In this paper we indeed prove such a discrete lifting theorem, taking advantage of the nonconforming approximation, and consequently we establish the error estimates $O(h^\alpha + \epsilon)$ and $O(h^{2\alpha} + \epsilon)$ for the velocity in the $H^1$- and $L^2$-norms respectively, where $\alpha = 1$ if $N=2$ and $\alpha = 1/2$ if $N=3$.
	This improves the previous result [T. Kashiwabara et al., Numer.\ Math.\ 134 (2016), pp.\ 705--740] obtained for the conforming approximation in the sense that there appears no reciprocal of the penalty parameter $\epsilon$ in the estimates.
\end{abstract}
\maketitle

\section{Introduction}
This work is continuation of \cite{KOZ16} and we consider the same PDEs as there, that is, the slip boundary value problem of the Stokes equations in a bounded smooth domain $\Omega\subset\mathbb R^N$ as follows:
\begin{equation} \label{eq: Stokes slip BC}
	\left\{
	\begin{aligned}
		u - \nu \Delta u + \nabla p &= f \quad\text{in}\quad \Omega, \\
		\mathrm{div}\, u &= 0 \quad\text{in}\quad \Omega, \\
		u\cdot n &= g \quad\text{on}\quad \Gamma := \partial\Omega, \\
		(\mathbb I - n\otimes n) \sigma(u, p) n &= \tau \quad\text{on}\quad \Gamma.
	\end{aligned}
	\right.
\end{equation}
As in \cite{KOZ16}, $\nu>0$ is a viscosity constant, $n$ means the outer unit normal to $\Gamma$, and $\sigma(u, p) := -p \mathbb I + \nu(\nabla u + (\nabla u)^\top)$ denotes the stress tensor.
We impose the compatibility condition between \eref{eq: Stokes slip BC}$_2$ and \eref{eq: Stokes slip BC}$_3$ by
\begin{equation} \label{eq: compatibility of g}
	\int_\Gamma g \, ds = 0.
\end{equation}
The first term of \eref{eq: Stokes slip BC}$_1$ is added in order to avoid cumbersomeness concerning rigid body rotations (see \cite[Remark 1.1]{KOZ16}).

Before explaining the goals of the present paper, let us review the results of \cite{KOZ16}.
Since the original domain $\Omega$ has a curved boundary, we need to approximate it by a polygonal or polyhedral domain $\Omega_h$ to invoke the finite element method, where we construct meshes, build finite element spaces, and define variational formulations.
In case of the slip boundary problem, however, one has to be careful in setting a test function space.
In fact, imposing the constraint $v_h \cdot n_h = 0$ at each degree of freedom on $\Gamma_h$ ($n_h$ being the outer unit normal to $\Gamma_h$), which seems natural at first glance, would result in a variational crime.
Several strategies to overcome it are proposed e.g.\ in \cite{BaDe99, DiUr15, Kno99, Ver87}.

In \cite{KOZ16}, we considered to weakly impose the constraint above by the penalty method together with reduced-order numerical integration.
Employing the P1/P1 approximation, we derived the error bound $O(h + \epsilon^{1/2} + h^{2\alpha}/\epsilon^{1/2})$ for the $H^1$- and $L^2$-norms of velocity and pressure, respectively.
Here, $h$ and $\epsilon$ denote the discretization and penalty parameters, respectively, and the number $\alpha$ is given by $\alpha = 1$ if $N=2$ and $\alpha = 1/2$ if $N=3$.
In particular, the optimal rate of convergence $O(h)$ was achieved by choosing $\epsilon = O(h^2)$ in the two-dimensional case.
This strategy was then extended to the stationary Navier--Stokes equations in \cite{ZKO16} and to the non-stationary Stokes equations in \cite{ZKO17}.

The first goal of the present paper is to improve the error bound mentioned above.
In fact, the rate $O(h + \epsilon^{1/2} + h^{2\alpha}/\epsilon^{1/2})$ is not optimal because it is known that the penalty method admits the optimal rate of convergence $O(h + \epsilon)$ for polygonal or polyhedral domains, i.e., when $\Omega = \Omega_h$ (see \cite{CaLi09}).
We show that the nonconforming P1/P0 approximation (also known as the Crouzeix--Raviart approximation, see \cite{BuHa05, CrRa73}) for smooth domains, combined with the penalty method and with reduced-order numerical integration, leads to the rate $O(h^{\alpha} + \epsilon)$, where the meaning of $\alpha$ is the same as above.
Therefore, for the two-dimensional case we establish the optimal rate $O(h + \epsilon)$ even when $\Omega \neq \Omega_h$.
Moreover, we also provide the $L^2$-error estimate for velocity, giving the rate of convergence $O(h^{2\alpha} + \epsilon)$, which was not available in \cite{KOZ16}.

The key point of our approach is that, in the Crouzeix--Raviart approximation, the degrees of freedom for velocity (namely, the midpoints of edges or the barycenters of faces) agree with those of $n_h$ on the boundary $\Gamma_h$.
This fact enables us to prove a discrete counterpart to the inf-sup condition
\begin{equation*}
	C \|\mu\|_{H^{-1/2}(\Gamma)} \le \sup_{v \in H^1(\Omega)} \frac{\int_\Gamma (v\cdot n) \mu \, ds}{\|v\|_{H^1(\Omega)}} \qquad \forall \mu \in H^{-1/2}(\Gamma),
\end{equation*}
which was not available for the P1/P1 approximation in \cite{KOZ16}.
This follows from a discrete counterpart of a lifting theorem, more precisely, a stability estimate concerning a right continuous inverse of the trace operator in the normal direction:
\begin{equation*}
	H^1(\Omega)^N \to H^{1/2}(\Gamma); \quad v \mapsto v|_{\Gamma} \cdot n.
\end{equation*}
We emphasize, however, that such a discrete lifting theorem in $\Omega_h$ is completely non-trivial since $n_h$, which is only piecewisely constant on $\Gamma_h$, has jump discontinuities and thus fails to belong to $H^{1/2}(\Gamma_h)^N$.
Similarly, the trace of a nonconforming P1 function $v_h$ to the boundary does not necessarily admit $H^{1/2}$-regularity (cf.\ \cite[Appendix]{BHP05}).
To overcome those difficulties, we introduce a discrete version of the $H^{1/2}(\Gamma_h)$-norm and combine it with the so called \emph{enriching operator} (cf.\ \cite[Appendix B]{BSZ14}) to reduce the nonconforming approximation to the conforming one, which is a basic strategy to prove the discrete lifting theorem.

The second goal of the present paper is to provide, in case of nonconforming approximations, a framework to address the errors owing to the discrepancy $\Omega \neq \Omega_h$, which we refer to as \emph{domain perturbation}.
To the best of our knowledge, there are very few studies in the literature dealing with the issues of domain perturbation when nonconforming approximations, including discontinuous Galerkin methods, are involved.
However, nonconforming approximations in the situation of domain perturbation is important when considering interfacial transmission problems (an example is the Stokes--Darcy problem, see e.g.\ \cite{BHP05, LSY03}).
In fact, for such problems it is natural to encounter physical jump discontinuities in normal or tangential directions along curved interfaces, which could be treated by the use of nonconforming approximations.
In future work, we would like to extend the techniques developed in this paper to interface problems in dealing with domain perturbation.

The rest of this paper is organized as follows.
In \sref{sec2} we introduce variational formulation, triangulation, and finite element spaces.
We also propose our finite element scheme and state the main results.
In \sref{sec3}, auxiliary lemmas relating to the discrete $H^{1/2}$-norm and to domain perturbation estimates are stated.
Some of their proofs will be given in Appendices.
After establishing discrete well-posedness in \sref{sec4}, we derive the $H^1$- and $L^2$-error estimates (for velocity) in Sections \ref{sec5} and \ref{sec6}, respectively.
We give a numerical example in \sref{sec7} to confirm the theoretical result.
Throughout this paper, $C$ will denote a generic constant which may depend only on $\Omega$, $N$, and $\nu$ unless otherwise stated.

\section{Preliminaries and Main Theorem} \label{sec2}
\subsection{Function spaces and variational forms}
Throughout this paper, we adopt the standard notion of Lebesgue and Sobolev spaces.
To state a variational formulation for \eref{eq: Stokes slip BC}, we set
\begin{align*}
	V = H^1(\Omega)^N, \quad Q = L^2(\Omega), \quad \mathring V = H^1_0(\Omega)^N, \quad \mathring Q = L^2_0(\Omega),
\end{align*}
and
\begin{equation*}
	V_n = \{ v \in V \,:\, v\cdot n = 0 \; \text{on} \; \Gamma \}.
\end{equation*}
Next, for a domain $G\subset\mathbb R^N$ we define bilinear forms as follows:
\begin{align*}
	a_G(u, v) &= (u, v)_G + \frac\nu2 (\mathbb E(u), \mathbb E(v))_G, \\
	b_G(p, v) &= -(p, \mathrm{div}\, v)_G, \\
	c_{\partial G} (\lambda, \mu) &= (\lambda, \mu)_{\partial G},
\end{align*}
where $\mathbb E(u) := \nabla u + (\nabla u)^\top$ and $(\cdot, \cdot)_G$ denotes the inner product of $L^2(G)$.

The weak form for \eref{eq: Stokes slip BC} now reads as follows: find $(u, p) \in V\times \mathring Q$ satisfying $u\cdot n = g$ on $\Gamma$ and
\begin{equation} \label{eq: 2-field formulation}
	\left\{
	\begin{aligned}
		a(u, v) + b(p, v) &= (f, v)_\Omega + (\tau, v)_\Gamma \quad &&\forall v \in V_n, \\
		b(q, u) &= 0 \quad &&\forall q \in\mathring Q, \\
	\end{aligned}
	\right.
\end{equation}
where we have employed the abbreviations $a := a_{\Omega}$ and $b := b_{\Omega}$.
Defining the Lagrange multiplier $\lambda := -\sigma(u,p)n\cdot n \in H^{-1/2}(\Gamma) =: \Lambda$, one sees that $(u, p, \lambda)$ satisfies
\begin{equation} \label{eq: 3-field formulation}
	\left\{
	\begin{aligned}
		a(u, v) + b(p, v) + c(\lambda, v\cdot n) &= (f, v)_\Omega + (\tau, v)_\Gamma \quad &&\forall v \in V, \\
		b(q, u) &= 0 \quad &&\forall q \in Q, \\
		c(\mu, u\cdot n - g) &= 0 \quad &&\forall \mu \in \Lambda,
	\end{aligned}
	\right.
\end{equation}
where $c$ means $c_{\partial\Omega}$.
The well-posedness of \eref{eq: Stokes slip BC} (or \eref{eq: 2-field formulation}, \eref{eq: 3-field formulation}) is well known e.g.\ in \cite{BdV04}; in particular, if $f \in L^2(\Omega)$, $g \in H^{3/2}(\Gamma)$, and $\tau \in H^{1/2}(\Gamma)^N$, then there exists a unique solution such that $u \in H^2(\Omega)^N$ and $p \in H^1(\Omega) \cap L^2_0(\Omega)$.

\subsection{Triangulations}
Let $\{\mathcal T_h\}_{h\downarrow0}$ be a regular family of triangulations of a polyhedral domain $\Omega_h$, which is assigned the \emph{mesh size} $h>0$.
Namely, we assume that:
\begin{enumerate}[(H1)]
	\item each $T\in\mathcal T_h$ is a closed $N$-simplex such that $h_T := \mathrm{diam}\,T \le h$;
	\item $\Omega_h = \bigcup_{T\in\mathcal T_h}T$;
	\item the intersection of any two distinct elements is empty or consists of their common face of dimension $\le N-1$;
	\item there exists a constant $C>0$, independent of $h$, such that $\rho_T \ge Ch_T$ for all $T\in\mathcal T_h$ where $\rho_T$ denotes the diameter of the inscribed ball of $T$.
\end{enumerate}
Moreover, we denote by $\mathcal E_h$ the set of the edges or faces, that is,
\begin{equation*}
	\mathcal E_h = \{e\subset\overline\Omega_h \,:\, \text{$e$ is an $(N-1)$-dimensional face of some $T \in \mathcal T_h$} \}.
\end{equation*}
The sets of the interior and boundary edges are denoted by $\mathring{\mathcal E}_h$ and $\mathcal E_h^\partial$ respectively, namely,
\begin{equation*}
	\mathcal E_h^\partial = \{ e\in\mathcal E_h \,:\, e\subset\Gamma_h \}, \qquad \mathring{\mathcal E}_h = \mathcal E_h\setminus\mathcal E_h^\partial.
\end{equation*}
We assume that $\Omega_h$ approximates $\Omega$ in the following sense:
\begin{enumerate}[(H1)]
	\setcounter{enumi}{4}
	\item the vertices of every $e \in \mathcal E_h^\partial$ lie on $\Gamma = \partial\Omega$.
\end{enumerate}
Throughout this paper, we confine ourselves to the case where $0 < h \ll 1$ is sufficiently small, which will not be emphasized below.

The set of vertices and that of midpoints of edges are defined as
\begin{equation*}
	\mathcal V_h = \{ p\in\overline\Omega_h \,:\, \text{$p$ is a vertex of some $T \in \mathcal T_h$} \}, \qquad \mathcal M_h = \{ m_e \in \overline\Omega_h \,:\, e \in \mathcal E_h \}, 
\end{equation*}
where $m_e$ means the midpoint (barycenter) of $e\in\mathcal E_h$.
We introduce a broken Sobolev space by
\begin{equation*}
	H^1(\mathcal T_h) = \{v \in L^2(\Omega_h) \,:\, v|_T \in H^1(T) \; \forall T \in \mathcal T_h\}.
\end{equation*}
To describe jump discontinuities across interior edges, for $v \in H^1(\mathcal T_h)$ we define
\begin{equation*}
	\jump{v}(x) := \lim_{s\to0+} (v(x + sn_e) - v(x - sn_e)), \quad x \in e \in \mathring{\mathcal E}_h,
\end{equation*}
where $n_e$ is a unit normal vector to $e$.
For $e \in \mathring{\mathcal E}_h$ (resp.\ $e \in \mathcal E_h^\partial$), there exists a unique element $T_e^{\pm} \in \mathcal T_h$ (resp.\ $T \in \mathcal T_h$) such that $m_e \pm s n_e \in T_e^{\pm}$ with sufficiently small $s>0$ (resp.\ $m_e \in T_e$).
\begin{rem}
	There are two choices for the direction of $n_e$.
	In this paper, we suppose that each $e \in \mathring{\mathcal E}_h$ is given an arbitrary orientation, which determines the direction of $n_e$.
	Note that, given a vector function $v$, the jump term $[v\cdot n_e](x)$ is well defined regardless of the orientation.
\end{rem}

\subsection{Crouzeix--Raviart element}
For each $T\in\mathcal T_h$ we denote by $P_k(T)$ the space of the polynomial functions of degree up to $k$ defined in $T$.
In the Crouzeix--Raviart element, velocity and pressure are approximated by nonconforming P1 and P0 functions, respectively.  Thereby we introduce
\begin{align*}
	V_h &= \{ v_h \in H^1(\mathcal T_h)^N \,:\, v_h|_T \in P_1(T)^N \; \forall T \in \mathcal T_h, \quad \jump{v_h} (m_e) = \textstyle\frac1{|e|}\int_e \jump{v_h} \,ds = 0 \; \forall e \in \mathring{\mathcal E}_h \}, \\
	Q_h &= \{ q_h \in L^2(\Omega_h) \,:\, v_h|_T \in P_0(T) \; \forall T \in \mathcal T_h \},
\end{align*}
where $|e|$ stands for the $(N-1)$-dimensional measure of $e$.
We will also utilize the conforming P1 finite element space, that is,
\begin{equation*}
	\overline V_h = \{ v_h \in C(\overline\Omega_h)^N \,:\, v_h|_T \in P_1(T)^N \; \forall T \in \mathcal T_h \}.
\end{equation*}

The nodal basis functions of $V_h$ and $\overline V_h$ are denoted by $\{\phi_e\}_{e \in \mathcal E_h}$ and $\{\bar\phi_p\}_{p \in \mathcal V_h}$ respectively, where $\phi_e \in V_h$ and $\bar\phi_p \in \overline V_h$ are defined by the conditions
\begin{equation*}
	\phi_e(x) = \begin{cases} 1 & \text{if } x=m_e, \\ 0 & \text{if } x\neq m_e, e\in \mathcal E_h, \end{cases} \qquad
	\bar\phi_p(x) = \begin{cases} 1 & \text{if } x=p, \\ 0 & \text{if } x\neq p, x\in \mathcal V_h. \end{cases}
\end{equation*}
It follows from \cite[Theorem 3.1.2]{Cia78} and regularity of meshes that
\begin{align*}
	\|\phi_e\|_{H^m(T)} &\le Ch_e^{N/2 - m}, && e \in \mathcal E_h, \, T \in \mathcal T_h, \, e \cap T \neq \emptyset, \\
	\|\phi_e\|_{H^m(e')} &\le Ch_e^{(N-1)/2 - m}, && e, e' \in \mathcal E_h, \, e \cap e' \neq \emptyset,
\end{align*}
where $h_e := \operatorname{diam}e$, and the quantities dependent only on a fixed reference element (e.g.\ unit simplex) are combined into generic constants $C$.
Similar estimates also hold for nodal basis functions $\bar\phi_p$ of $\overline V_h$, provided that the vertex $p$ belongs to $T \in \mathcal T_h$ or $e' \in \mathcal E_h$.

Approximate spaces for $\mathring V$ and $\mathring Q$ are given as
\begin{equation*}
	\mathring V_h = \{ v_h \in V_h \,:\, v_h(m_e) = 0 \; \forall e \in \mathcal E_h^\partial \}, \qquad \mathring Q_h = Q_h \cap \mathring Q.
\end{equation*}
We note, however, that $v_h \in \mathring V_h$ does not imply $v_h|_{\Gamma_h} \equiv 0$.
We equip $V_h$ and $Q_h$ with the norms
\begin{equation*}
	\|v_h\|_{V_h} = \Big( \|v_h\|_{L^2(\Omega_h)}^2 + \sum_{T \in \mathcal T_h} \|\nabla v_h\|_{L^2(T)}^2 \Big)^{1/2}, \qquad \|q_h\|_{Q_h} = \|q_h\|_{L^2(\Omega_h)}.
\end{equation*}
To describe Lagrange multipliers defined on $\Gamma_h$, we set
\begin{align*}
	\Lambda_h &= \{\mu_h \in L^2(\Gamma_h) \,:\, \mu_h|_e \in P_0(e) \; \forall e \in \mathcal E_h^\partial\}, \\
	\overline\Lambda_h &= \{\mu_h \in C(\Gamma_h) \,:\, \mu_h|_e \in P_1(e) \; \forall e \in \mathcal E_h^\partial\}.
\end{align*}

An interpolation operator $\Pi_h : H^1(\Omega_h)^N \to V_h$ is defined by $\Pi_h v(m_e) = \frac1{|e|} \int_e v \, ds$ for $e \in \mathcal E_h$.
It is known (see \cite{CrRa73}) that
\begin{align*}
	\|v - \Pi_h v\|_{L^2(T)} + h_T \|\nabla(v - \Pi_h v)\|_{L^2(T)} &\le Ch_T^2 \|\nabla^2 v\|_{L^2(T)}, \qquad T \in \mathcal T_h, \; v \in H^2(T), \\
	\|v - \Pi_h^\partial v\|_{H^{-1/2}(e)} &\le Ch_e \|\nabla v\|_{L^2(T_e)}, \qquad e \in \mathcal E_h^\partial, \; v \in H^1(T_e).
\end{align*}
For convenience, we also define an analogue of $\Pi_h$ restricted to the boundary, namely, we define $\Pi_h^\partial: L^2(\Gamma_h) \to \Lambda_h$ by $\Pi_h^\partial v(m_e) = \frac1{|e|} \int_e v\, ds$ for all $e \in \mathcal E_h^\partial$.

The continuity at the midpoints ensures that
\begin{equation} \label{eq: equivalence of double-line and triple-line norms}
	\sum_{e \in \mathring{\mathcal E}_h} h_e^{-1} \|\jump{v_h}\|_{L^2(e)}^2 \le C \sum_{T \in \mathcal T_h} \|\nabla v_h\|_{L^2(T)}^2 \quad \forall v_h \in V_h.
\end{equation}
In fact, since $\jump{v_h}(m_e) = 0$ for $e \in \mathring{\mathcal E}_h$ and $\nabla v_h$ is piecewisely constant, we have
\begin{align*}
	\frac1{h_e} \int_e |\jump{v_h}|^2\, ds &\le \frac1{2h_e} \left( \int_e \big| v_h|_{T_e^+} - v_h(m_e) \big|^2\, ds + \int_e \big| v_h|_{T_e^-} - v_h(m_e) \big|^2\, ds \right) \\
		&\le \frac{|e| h_e}2 (\|\nabla v_h\|_{L^\infty(T_e^{+})}^2 + \|\nabla v_h\|_{L^\infty(T_e^{-})}^2) \le C \sum_{T \in \mathcal T_h(e)} \|\nabla v_h\|_{L^2(T)}^2,
\end{align*}
which after the summation for $e\in \mathring{\mathcal E}_h$ proves \eref{eq: equivalence of double-line and triple-line norms}.
Hence $\|\cdot\|_{V_h}$ is equivalent to $\vertiii{\,\cdot\,}_{V_h}$ given by
\begin{equation*}
	\vertiii{v_h}_{V_h} = \Big( \|v_h\|_{L^2(\Omega_h)}^2 + \sum_{T \in \mathcal T_h} \|\nabla v_h\|_{L^2(T)}^2 + \sum_{e \in \mathring{\mathcal E}_h} h_e^{-1} \|\jump{v_h}\|_{L^2(e)}^2 \Big)^{1/2}, \quad v_h \in V_h,
\end{equation*}
which often appears in discontinuous Galerkin methods.

Adding up the trace inequality $\|v\|_{L^2(e)} \le C \|v\|_{L^2(T_e)}^{1/2} \|v\|_{H^1(T_e)}^{1/2}$ for $e \in \mathcal E_h^\partial$ yields
\begin{equation*}
	\|v\|_{L^2(\Gamma_h)} \le C \|v\|_{L^2(\Omega_h)}^{1/2} \|v\|_{V_h}^{1/2}, \quad v \in H^1(\mathcal T_h),
\end{equation*}
where the constant $C$ depends only on a reference element.

An interpolation operator for pressure is defined as the projector $R_h : Q \to Q_h$, that is, $(R_h p - p, q_h)_{\Omega_h} = 0$ for all $p \in Q$ and $q_h \in Q_h$.
Then we have (see \cite[Lemma 12.4.3]{BrSc07})
\begin{equation*}
	\|R_h p - p\|_{Q_h} \le Ch \|\nabla p\|_{L^2(\Omega_h)}, \qquad p \in H^1(\Omega_h).
\end{equation*}
We also note that $R_h(\mathring Q) \subset \mathring Q_h$.

\subsection{FE scheme with penalty and main theorem}
We propose a finite element approximate problem to \eref{eq: Stokes slip BC} as follows: choose $\epsilon>0$ and find $(u_h, p_h) \in V_h\times Q_h$ such that
\begin{equation} \label{eq: discrete 2-field formulation}
	\left\{
	\begin{aligned}
		&a_h(u_h, v_h) + b_h(p_h, v_h) + \frac1\epsilon c_h(u_h\cdot n_h - \tilde g, v_h\cdot n_h) + j_h(u_h, v_h) = (\tilde f, v_h)_{\Omega_h} + (\tilde\tau, v_h)_{\Gamma_h} && \forall v_h \in V_h, \\
		&b_h(q_h, u_h) = 0 && \forall q_h \in Q_h.
	\end{aligned}
	\right.
\end{equation}
Here, we are making use of an extension operator $P: W^{m,p}(\Omega) \to W^{m,p}(\mathbb R^N)$ satisfying the stability condition $\|Pv\|_{W^{m,p}(\mathbb R^N)} \le C\|v\|_{W^{m,p}(\Omega)}$, where the constant $C$ depends only on $N$, $\Omega$, $m$, and $p$.
If this is combined with a stable lifting operator (right continuous inverse of the trace operator) $L: W^{m-1/p, p}(\Gamma) \to W^{m,p}(\Omega) \, (m\ge 1)$, one can also consider extensions from $\Gamma$ to $\mathbb R^N$.
In the following, all of such extensions are simply denoted by $\tilde f$, $\tilde g$, $\tilde \tau$, etc.
\begin{rem}
	The way of extensions may be arbitrary as far as they satisfy the stability conditions mentioned above.
	In particular, $P$ or $L$ has no effect on the rate of convergence in Theorems \ref{thm: H1 error estimate} and \ref{thm: L2 error estimate}, whereas the constants $C$ appearing there will depend on the choice of them.
\end{rem}

The bilinear forms in \eref{eq: discrete 2-field formulation} are defined by
\begin{align*}
	a_h(u, v) &= \sum_{T \in \mathcal T_h} \Big( (u, v)_T + \frac\nu2 (\mathbb E(u), \mathbb E(v))_T \Big), && u, v \in H^1(\mathcal T_h), \\
	b_h(p, v) &= - \sum_{T \in \mathcal T_h} (p, \operatorname{div}v)_T, && p \in Q, v \in H^1(\mathcal T_h), \\
	c_h(\lambda, \mu) & = (\Pi_h^\partial \lambda, \Pi_h^\partial \mu)_{\Gamma_h}, && \lambda, \mu \in L^2(\Gamma_h), \\
	j_h(u, v) &= \sum_{e \in \mathring{\mathcal E}_h} \frac{\gamma}{h_e} ([u], [v])_e, && u, v \in H^1(\mathcal T_h),
\end{align*}
where $\gamma$ is a stabilization parameter, which one can choose to be any positive constant.
\begin{rem}
	For $u_h, v_h \in V_h$, we see that $c_h(u_h\cdot n_h, v_h\cdot n_h)$ agrees with the midpoint (barycenter) formula applied to $(u_h\cdot n_h, v_h\cdot n_h)_{\Gamma_h}$.
	In this sense, reduced-order numerical integration is applied to the penalty term.
\end{rem}

The main results of this paper are the well-posedness and error estimates to \eref{eq: discrete 2-field formulation} stated as follows.
\begin{thm} \label{thm: well-posedness}
	There exists a unique solution $(u_h, p_h) \in V_h \times Q_h$ of \eref{eq: discrete 2-field formulation}.
	Moreover, it satisfies
	\begin{align}
		\|u_h\|_{V_h} + \|\mathring p_h\|_{Q_h} &\le C \big( \|f\|_{L^2(\Omega)} + \|\tau\|_{H^{1/2}(\Gamma)} + (1 + h\epsilon^{-1/2}) \|g\|_{H^{3/2}(\Gamma)} \big), \label{eq: stability of uh and ph} \\
		|k_h| &\le C \big( \|f\|_{L^2(\Omega)} + \|\tau\|_{H^{1/2}(\Gamma)} + (1 + h^2\epsilon^{-1}) \|g\|_{H^{3/2}(\Gamma)} \big), \label{eq: stability of kh}
	\end{align}
	where $k_h := (p_h, 1)_{\Omega_h}/|\Omega_h|$ and $\mathring p_h := p_h - k_h \in \mathring Q_h$.
\end{thm}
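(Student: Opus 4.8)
The plan is to read \eref{eq: discrete 2-field formulation} as a penalized saddle-point problem for $(u_h,p_h)$ and to apply the Brezzi theory, the one non-standard ingredient being a discrete lifting of the boundary datum that keeps the right-hand side free of negative powers of $\epsilon$. Write the velocity block as $A_h(u,v):=a_h(u,v)+\tfrac1\epsilon c_h(u\cdot n_h,v\cdot n_h)+j_h(u,v)$, so that the first line of \eref{eq: discrete 2-field formulation} reads $A_h(u_h,v_h)+b_h(p_h,v_h)=(\tilde f,v_h)_{\Omega_h}+(\tilde\tau,v_h)_{\Gamma_h}+\tfrac1\epsilon c_h(\tilde g,v_h\cdot n_h)$. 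I would first record the two structural pillars. For coercivity, the penalty term is nonnegative and $j_h(v_h,v_h)$ controls the jump contribution to $\vertiii{v_h}_{V_h}$, so it remains to bound $\sum_{T}\|\nabla v_h\|_{L^2(T)}^2$ by the symmetric gradient plus jumps plus $\|v_h\|_{L^2(\Omega_h)}^2$; this is a broken Korn inequality for piecewise $H^1$ vector fields, and with \eref{eq: equivalence of double-line and triple-line norms} it yields $A_h(v_h,v_h)\ge C\|v_h\|_{V_h}^2+\tfrac1\epsilon\|\Pi_h^\partial(v_h\cdot n_h)\|_{L^2(\Gamma_h)}^2$. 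For the inf-sup condition, the classical Crouzeix--Raviart estimate for $b_h$ on $\mathring V_h\times\mathring Q_h$ holds a fortiori on the larger $V_h$ tested against mean-zero pressures, while the constant pressure mode is reached by a single fixed field of non-zero flux (e.g. $\Pi_h$ applied to $x\mapsto x$); hence $\mathrm{div}_h\colon V_h\to Q_h$ is surjective with a bounded right inverse. Existence and uniqueness then follow from a dimension count: the system is square, so it suffices to treat homogeneous data, where testing with $v_h=u_h$ and invoking the constraint gives $A_h(u_h,u_h)=0$, hence $u_h=0$, after which surjectivity of $\mathrm{div}_h$ forces $p_h=0$.

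To prove \eref{eq: stability of uh and ph} the key move is to eliminate the factor $\tfrac1\epsilon$ standing in front of $c_h(\tilde g,\cdot)$, which a crude Young inequality would turn into the unwanted $\epsilon^{-1/2}\|g\|$. I would invoke the discrete lifting theorem to construct $w_h\in V_h$ with $\Pi_h^\partial(w_h\cdot n_h)=\Pi_h^\partial\tilde g$, $\|w_h\|_{V_h}\le C\|g\|_{H^{3/2}(\Gamma)}$, and $\mathrm{div}_h w_h$ constant (this last being optimal, since $\int_{\Gamma_h}\tilde g$ need not vanish). Setting $\bar u_h:=u_h-w_h$, the penalty datum cancels exactly, so $\bar u_h$ solves $A_h(\bar u_h,v_h)+b_h(p_h,v_h)=\tilde F(v_h)$ with $|\tilde F(v_h)|\le C(\|f\|_{L^2(\Omega)}+\|\tau\|_{H^{1/2}(\Gamma)}+\|g\|_{H^{3/2}(\Gamma)})\,\|v_h\|_{V_h}$ and no $\epsilon$ remaining; the constraint becomes $b_h(q_h,\bar u_h)=-b_h(q_h,w_h)$, whose mean-zero part vanishes because $\mathrm{div}_h w_h$ is constant. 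Testing with $\bar u_h$ and using coercivity then bounds $\|u_h\|_{V_h}$ together with the penalty residual $\epsilon^{-1/2}\|\Pi_h^\partial(\bar u_h\cdot n_h)\|_{L^2(\Gamma_h)}$ by the data.

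For the pressure I would split $p_h=\mathring p_h+k_h$. The mean-zero part is recovered from the inf-sup condition: on zero-flux test functions $b_h(\mathring p_h,v_h)$ equals $\tilde F(v_h)-A_h(\bar u_h,v_h)$, and the only term carrying $\epsilon$ is the penalty pairing $\tfrac1\epsilon c_h(\bar u_h\cdot n_h,v_h\cdot n_h)$, which I would estimate through the discrete $H^{-1/2}$--$H^{1/2}$ duality on $\Gamma_h$ (the discrete $H^{1/2}$-trace bound $\|\Pi_h^\partial(v_h\cdot n_h)\|_{1/2,h}\le C\|v_h\|_{V_h}$ together with the penalty-residual control) to produce the factor $h\epsilon^{-1/2}\|g\|_{H^{3/2}(\Gamma)}$ of \eref{eq: stability of uh and ph}. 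The constant $k_h$ is the delicate mode and is treated separately: testing with a fixed unit-flux field $v_h^\ast$ (so that $b_h(k_h,v_h^\ast)$ isolates $k_h$) turns the penalty contribution into $\tfrac1\epsilon\big(\int_{\Gamma_h}u_h\cdot n_h-\int_{\Gamma_h}\tilde g\big)$; the discrete constraint with $q_h\equiv1$ gives $\int_{\Gamma_h}u_h\cdot n_h=0$, and the compatibility \eref{eq: compatibility of g} combined with a domain-perturbation estimate $\big|\int_{\Gamma_h}\tilde g\big|=\big|\int_{\Gamma_h}\tilde g-\int_\Gamma g\big|\le Ch^2\|g\|_{H^{3/2}(\Gamma)}$ yields precisely the term $h^2\epsilon^{-1}\|g\|_{H^{3/2}(\Gamma)}$ of \eref{eq: stability of kh}, the remaining contributions being absorbed by the already-bounded $\|u_h\|_{V_h}$ and $\|\mathring p_h\|_{Q_h}$.

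The main obstacle is attaining control that is uniform (or only mildly singular) in $\epsilon$. A direct energy argument only delivers $\epsilon^{-1/2}\|g\|$, and the refinement to $h\epsilon^{-1/2}$ and $h^2\epsilon^{-1}$ hinges on two genuinely non-trivial facts from the preceding section: the existence of a stable discrete lifting with prescribed midpoint normal trace --- non-trivial precisely because $n_h$ is only piecewise constant and $\Pi_h^\partial(v_h\cdot n_h)$ need not lie in $H^{1/2}(\Gamma_h)$ --- and the boundedness of the normal-trace pairing in the discrete $H^{\pm1/2}$ norms. The separation of $p_h$ into $\mathring p_h$ and $k_h$ is essential, since the flux incompatibility $\int_{\Gamma_h}\tilde g\neq0$ pollutes only the constant mode and is the sole source of the worse power $h^2\epsilon^{-1}$.
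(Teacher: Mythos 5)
Your overall strategy---homogenize the penalty by subtracting a discrete lifting $w_h$ of the boundary datum, then run energy, inf-sup, and unit-flux-test estimates---is viable and rests on exactly the same pillars as the paper's proof: the discrete Korn inequality \eref{eq: discrete Korn's inequality}, the inf-sup condition \eref{eq: inf-sup for bh}, \lref{lem: discrete lifting theorem} combined with Lemmas \ref{lem: compatibility with usual H1/2 norm}--\ref{lem: compatibility with usual H1/2 norm when nh is involved}, and the bound $|c_h(\tilde g,1)|\le Ch^2\|\tilde g\|_{H^2(\tilde\Omega)}$ coming from \eref{eq: compatibility of g} and the boundary-skin estimates. (The paper instead keeps $u_h$ and disposes of the term $\tfrac1\epsilon c_h(u_h\cdot n_h-\tilde g,\tilde g)$ by testing the scheme with a lifting $z_h$ of $\Pi_h^\partial\tilde g$; your $k_h$ step is essentially identical to the paper's.) However, your velocity step has a genuine gap. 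Testing with $v_h=\bar u_h$ leaves the term $b_h(p_h,\bar u_h)$; its mean-zero part vanishes, as you note, but its constant part does not: since, for Crouzeix--Raviart functions, elementwise integration by parts and the mean-zero jumps give $-b_h(1,v_h)=\int_{\Gamma_h}\Pi_h^\partial(v_h\cdot n_h)\,ds$, the prescription $\Pi_h^\partial(w_h\cdot n_h)=\Pi_h^\partial\tilde g$ forces $b_h(1,\bar u_h)=c_h(\tilde g,1)\neq0$ in general, hence $b_h(k_h,\bar u_h)=k_h\,c_h(\tilde g,1)$ survives. So the claim that testing with $\bar u_h$ bounds $\|u_h\|_{V_h}$ ``by the data'' is false as stated: the unknown $k_h$ enters the velocity estimate, while your own bound for $k_h$ uses $\|u_h\|_{V_h}$ and $\|\mathring p_h\|_{Q_h}$, so the three estimates are mutually coupled and cannot be read sequentially. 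The circle can be closed, because the coupling coefficient is $O(h^2\|g\|_{H^{3/2}(\Gamma)})$; a simultaneous absorption argument---precisely what the paper performs when combining \eref{eq: stability ph}--\eref{eq: stability of uh, reminder term}---finishes the proof (and, done this way, your route would even yield the slightly sharper factor $h^2\epsilon^{-1/2}$ in \eref{eq: stability of uh and ph}). But that closing step is missing, and without it the argument is circular.

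Your treatment of $\mathring p_h$ is also off the mark. For $v_h\in\mathring V_h$, the test space of \eref{eq: inf-sup for bh}, the penalty pairing vanishes \emph{identically}: $n_h$ is constant on each boundary edge, $v_h$ is affine there, and the midpoint rule gives $\Pi_h^\partial(v_h\cdot n_h)|_e=v_h(m_e)\cdot n_h(m_e)=0$. There is therefore nothing to estimate by duality, and the factor $h\epsilon^{-1/2}\|g\|_{H^{3/2}(\Gamma)}$ in \eref{eq: stability of uh and ph} does not originate in this step at all---it enters the pressure bound only through $\|u_h\|_{V_h}$. Worse, the ``discrete $H^{1/2}$-trace bound'' $\|\Pi_h^\partial(v_h\cdot n_h)\|_{1/2,\Lambda_h}\le C\|v_h\|_{V_h}$ that your duality argument would need is nowhere in the paper and is not innocuous: the paper explicitly warns that traces of nonconforming P1 functions need not lie in $H^{1/2}(\Gamma_h)$, so this is an unproven (and nontrivial) lemma. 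Finally, a smaller omission: \lref{lem: discrete lifting theorem} gives $w_h$ with prescribed midpoint normal trace and $\|w_h\|_{V_h}\le C\|\Pi_h^\partial\tilde g\|_{1/2,\Lambda_h}$, but \emph{not} with constant broken divergence; you must further correct $w_h$ by some $z_h\in\mathring V_h$ solving the divergence equation, using the surjectivity implicit in \eref{eq: inf-sup for bh} (legitimate, since the required right-hand side is mean-zero and elements of $\mathring V_h$ do not alter boundary midpoint values), and this construction should be spelled out rather than attributed to the lifting theorem alone.
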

\begin{rem} \label{rem: stability}
	(i) If $g = 0$, the terms involving $\epsilon^{-1}$ do not appear.
	
	(ii) Even if $g \neq 0$, $\|u_h\|_{V_h}$ becomes independent of $\epsilon \le 1$ in the end as a consequence of \tref{thm: H1 error estimate}.
\end{rem}

\begin{thm} \label{thm: H1 error estimate}
	Let $(u, p) \in H^2(\Omega)^N\times H^1(\Omega)$ be the solution of \eref{eq: Stokes slip BC} and $(u_h, p_h) \in V_h\times Q_h$ be that of \eref{eq: discrete 2-field formulation}.
	Then we obtain
	\begin{equation*}
		\vertiii{\tilde u - u_h}_{V_h} + \|\tilde p - \mathring p_h\|_{Q_h} \le C(h^\alpha + \epsilon) (\|f\|_{L^2(\Omega)} + \|g\|_{H^{3/2}(\Gamma)} + \|\tau\|_{H^{1/2}(\Gamma)}),
	\end{equation*}
	where $\alpha = 1$ if $N=2$ and $\alpha = 1/2$ if $N=3$.
\end{thm}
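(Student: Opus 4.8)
The plan is to run a Strang-type argument for the saddle-point system, adapted simultaneously to the nonconforming space $V_h$, to the penalty treatment of the slip condition, and to the domain perturbation $\Omega \neq \Omega_h$. First I would split the error through the interpolants $\Pi_h\tilde u \in V_h$ and $R_h\tilde p \in Q_h$, writing $\tilde u - u_h = (\tilde u - \Pi_h\tilde u) + e_h$ with $e_h := \Pi_h\tilde u - u_h$, and likewise $\tilde p - \mathring p_h = (\tilde p - R_h\tilde p) + \eta_h$. The interpolation parts are $O(h)$ by the Crouzeix--Raviart and $R_h$ estimates recalled in \sref{sec2} (using $u\in H^2(\Omega)$, $p\in H^1(\Omega)$ and the stability of the extensions $\tilde{\,\cdot\,}$), so it remains to bound the discrete errors $\vertiii{e_h}_{V_h} + \|\eta_h\|_{Q_h}$.

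Next I would derive the error equation for $(e_h,\eta_h)$. Testing the strong form \eref{eq: Stokes slip BC}$_1$ against an arbitrary $v_h\in V_h$ and integrating by parts elementwise produces $a_h(\tilde u,v_h)+b_h(\tilde p,v_h)$ together with boundary contributions $\sum_{e\in\mathring{\mathcal E}_h}\int_e \sigma(u,p)n_e\cdot\jump{v_h}\,ds$ across interior edges and $\sum_{e\in\mathcal E_h^\partial}\int_e \sigma(u,p)n_h\cdot v_h\,ds$ on $\Gamma_h$. On $\Gamma$ the slip condition gives $\sigma(u,p)n = \tau - \lambda n$ with $\lambda = -\sigma(u,p)n\cdot n\in H^{-1/2}(\Gamma)$; inserting this, splitting $\lambda(n\cdot v_h)=\lambda(n_h\cdot v_h)+\lambda((n-n_h)\cdot v_h)$, and converting $\int_{\Gamma_h},\int_{\Omega_h}$ to $\int_\Gamma,\int_\Omega$, I obtain after subtracting the scheme \eref{eq: discrete 2-field formulation} an identity of the form
\begin{equation*}
	a_h(e_h,v_h)+b_h(\eta_h,v_h)+\tfrac1\epsilon c_h(e_h\cdot n_h,v_h\cdot n_h)+j_h(e_h,v_h)=\langle\mathcal R,v_h\rangle,\qquad b_h(q_h,e_h)=\langle\mathcal S,q_h\rangle,
\end{equation*}
where $\mathcal R,\mathcal S$ collect (a) the interpolation remainders $a_h(\tilde u-\Pi_h\tilde u,v_h)$ and $b_h(\tilde p-R_h\tilde p,v_h)$, (b) the nonconformity term with $\jump{v_h}$, (c) the penalty/multiplier term comparing $c_h(\lambda,v_h\cdot n_h)$ with $\tfrac1\epsilon c_h(\tilde u\cdot n_h-\tilde g,v_h\cdot n_h)$ (plus the reduced-integration remainder between the exact boundary integral and $c_h$), and (d) the domain-perturbation discrepancies, including the delicate $\lambda((n-n_h)\cdot v_h)$ term; note that the Fortin property $b_h(q_h,\Pi_h\tilde u)=b_h(q_h,\tilde u)$ makes $\mathcal S$ purely a domain-perturbation residual.

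I would then invoke the discrete stability behind \tref{thm: well-posedness} --- the coercivity of $a_h+j_h+\tfrac1\epsilon c_h(\,\cdot\,n_h,\,\cdot\,n_h)$ on $V_h$ with respect to $\vertiii{\,\cdot\,}_{V_h}$ and the discrete inf-sup condition for $b_h$, which rests on the discrete lifting theorem and the discrete $H^{1/2}$-norm of \sref{sec3} --- to conclude $\vertiii{e_h}_{V_h}+\|\eta_h\|_{Q_h}\le C(\|\mathcal R\|_{V_h'}+\|\mathcal S\|_{Q_h'})$. Each residual is estimated separately: the interpolation and nonconformity terms are $O(h)$, the latter by exploiting that $\jump{v_h}$ has vanishing edge-mean, subtracting the edge-average of $\sigma(u,p)n_e$, and using \eref{eq: equivalence of double-line and triple-line norms}; the penalty/multiplier term is $O(\epsilon)$ once $\|\lambda\|_{H^{-1/2}(\Gamma)}\le C(\text{data})$ is combined with the normal-trace inf-sup; and the domain-perturbation terms are $O(h^\alpha)$ via the auxiliary lemmas of \sref{sec3}.

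The main obstacle is the boundary and domain-perturbation analysis, and this is exactly where the exponent $\alpha$ --- and its drop from $1$ in two dimensions to $1/2$ in three --- enters. Because $n_h$ is only piecewise constant and discontinuous across edges, $n-n_h$ is \emph{not} small in $H^{1/2}(\Gamma_h)$ and the normal trace $v_h\cdot n_h$ of a nonconforming function need not be $H^{1/2}$-regular, so neither the pairing $c_h(\lambda,v_h\cdot n_h)$ nor the boundary discrepancy carrying $\lambda((n-n_h)\cdot v_h)$ can be controlled by naive $H^{\pm1/2}$ duality. Routing these estimates through the discrete $H^{1/2}(\Gamma_h)$-norm and the enriching operator of \sref{sec3}, one finds that the $H^{1/2}(\Gamma_h)$-estimation of the normal-vector discrepancy (equivalently, of the nonconforming normal trace) is only $O(h^{1/2})$ when $N=3$ while it is $O(h)$ when $N=2$; this is precisely what yields $\alpha=1/2$ for $N=3$ against $\alpha=1$ for $N=2$.
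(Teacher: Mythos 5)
Your skeleton (interpolant splitting, elementwise integration by parts, residual functional, discrete stability) matches the paper's, but the way you close the argument contains a genuine gap: the stability framework you invoke cannot deliver the rate $O(h^\alpha+\epsilon)$. If you keep the penalty form $\frac1\epsilon c_h(e_h\cdot n_h, v_h\cdot n_h)$ coercively on the left and bound the error by dual norms of the residuals, then $\mathcal R$ necessarily contains both $c_h(\tilde\lambda, v_h\cdot n_h)$ and $\frac1\epsilon c_h(\tilde u\cdot n_h-\tilde g, v_h\cdot n_h)$. The first is $O(1)$ as a functional of $v_h$, and the only way to absorb it inside your framework is to play it against the penalty quadratic term by Young's inequality, which costs $\epsilon\|\Pi_h^\partial\tilde\lambda\|_{L^2(\Gamma_h)}^2$ in a quadratic inequality, i.e.\ only $O(\epsilon^{1/2})$ for the error itself; the second has dual norm of order $h^{2\alpha}/\epsilon$ by \eref{eq1: L2 error of u cdot nh - g}. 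Your route therefore reproduces exactly the old bound $O(h+\epsilon^{1/2}+h^{2\alpha}/\epsilon)$ of \cite{KOZ16} that the theorem is designed to improve, and your claim that ``the penalty/multiplier term is $O(\epsilon)$'' is unsupported as stated. The paper avoids this by introducing the discrete multiplier $\lambda_h:=\frac1\epsilon\Pi_h^\partial(u_h\cdot n_h-\tilde g)$, passing to the three-field system \eref{eq: discrete 3-field formulation}, and writing $I_3=-c_h(\mu_h-\lambda_h,\tilde u\cdot n_h-\tilde g)+\epsilon c_h(\mu_h-\lambda_h,\mu_h)-\epsilon\|\mu_h-\lambda_h\|^2_{L^2(\Gamma_h)}$: the multiplier error $\mu_h-\lambda_h$ is measured in the norms furnished by the two discrete inf-sup conditions \eref{eq1: discrete inf-sup for c} and \eref{eq2: discrete inf-sup for c} (consequences of \lref{lem: discrete lifting theorem}), re-expressed through the Galerkin identity \eref{eq: Galerkin orthogonality}, and absorbed; the clean $O(\epsilon)$ comes from $\epsilon c_h(\mu_h-\lambda_h,\mu_h)$, with no $1/\epsilon$ or $\epsilon^{1/2}$ anywhere.

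You also misattribute the origin of the exponent $\alpha$. The discrete $H^{1/2}$-machinery (Lemmas \ref{lem: compatibility with usual H1/2 norm} and \ref{lem: compatibility with usual H1/2 norm when nh is involved}, the enriching operator, and the lifting theorem) is dimension-independent --- the estimates there rest on $\|n_h-n\circ\pi\|_{L^\infty(\Gamma_h)}\le Ch$ and give the same powers of $h$ for $N=2$ and $N=3$ --- so it cannot explain the drop from $\alpha=1$ to $\alpha=1/2$. The true source is the flux superconvergence estimate of \lref{lem: error estimate for flux}: applying the divergence theorem on the skin region $\pi(e,\delta_e)$, in two dimensions the ``reminder boundaries'' $R$ and $R_h$ coincide, the lateral term vanishes, and the flux error is $O(h_e^{9/2})$, whereas in three dimensions the lateral boundary $L_e$ contributes $O(h_e\delta_e)=O(h_e^3)$. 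Through \lref{lem: error estimate of u dot nh - g} and \eref{eq1: L2 error of u cdot nh - g}--\eref{eq2: L2 error of u cdot nh - g} this yields $\|\Pi_h^\partial(\tilde u\cdot n_h-\tilde g)\|_{L^2(\Gamma_h)}=O(h^{2\alpha})$, which is precisely what enters $I_3$ and produces $h^\alpha$ in the final bound after the inf-sup feedback. In short, $\alpha$ measures the superconvergence of the edge-averaged (reduced-integration) constraint residual $\tilde u\cdot n_h-\tilde g$, not the $H^{1/2}$-size of $n-n_h$ or of nonconforming traces.
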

\begin{thm} \label{thm: L2 error estimate}
	Under the same assumption as in the previous theorem, we obtain
	\begin{equation*}
		\|\tilde u - u_h\|_{L^2(\Omega_h)} \le C(h^{2\alpha} + \epsilon) (\|f\|_{L^2(\Omega)} + \|g\|_{H^{3/2}(\Gamma)} + \|\tau\|_{H^{1/2}(\Gamma)}).
	\end{equation*}
\end{thm}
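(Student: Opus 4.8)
The plan is to run an Aubin--Nitsche duality argument, taking advantage of the symmetry of the bilinear form $a_h$. Writing $e := \tilde u - u_h \in V_h$ for the velocity error on $\Omega_h$, I would introduce the dual Stokes problem on the original domain $\Omega$: find $(\phi, \xi) \in H^2(\Omega)^N \times (H^1(\Omega)\cap L^2_0(\Omega))$ with
\[
	\phi - \nu\Delta\phi + \nabla\xi = e|_\Omega,\quad \operatorname{div}\phi = 0 \ \text{in}\ \Omega,\quad \phi\cdot n = 0,\quad (\mathbb I - n\otimes n)\sigma(\phi,\xi)n = 0 \ \text{on}\ \Gamma.
\]
Since $a$ is symmetric, this is an instance of \eref{eq: Stokes slip BC} with homogeneous boundary data $g=0$, $\tau=0$ and source $e|_\Omega$, so the regularity recalled in \sref{sec2} gives $\|\phi\|_{H^2(\Omega)} + \|\xi\|_{H^1(\Omega)} \le C\|e\|_{L^2(\Omega_h)}$, the mismatch between $\Omega$ and $\Omega_h$ in defining the source being absorbed by the domain-perturbation lemmas of \sref{sec3}.

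Next I would represent $\|e\|_{L^2(\Omega_h)}^2$ through this dual solution. Testing the continuous and discrete weak forms against each other, and using the extension $\tilde\phi$ together with its Crouzeix--Raviart interpolant $\Pi_h\tilde\phi$, one arrives at a decomposition of the schematic form
\[
	\|e\|_{L^2(\Omega_h)}^2 = a_h(e,\, \tilde\phi - \Pi_h\tilde\phi) + (\text{nonconforming consistency}) + (\text{penalty terms}) + (\text{domain-perturbation remainders}),
\]
where the Galerkin-type relation from \eref{eq: discrete 2-field formulation} has been used to eliminate the conforming part $a_h(e, \Pi_h\tilde\phi)$ up to lower-order contributions. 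The leading term is estimated by the interpolation bound $\|\tilde\phi - \Pi_h\tilde\phi\|_{V_h} \le Ch\|\phi\|_{H^2(\Omega)} \le Ch\|e\|_{L^2(\Omega_h)}$, so that, after invoking $\vertiii{e}_{V_h} \le C(h^\alpha + \epsilon)(\cdots)$ from \tref{thm: H1 error estimate}, this part is of order $(h^{1+\alpha} + h\epsilon)\|e\|_{L^2(\Omega_h)} \le Ch^{2\alpha}\|e\|_{L^2(\Omega_h)}$, using $1+\alpha\ge 2\alpha$. The genuine rate $h^{2\alpha}$ is thus dictated not by this interior term but by the boundary and domain-perturbation terms treated next.

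Those remaining contributions are controlled by the discrete $H^{1/2}$-norm and domain-perturbation machinery of \sref{sec3}. The nonconforming consistency terms, present because $\Pi_h\tilde\phi$ and $u_h$ jump across interior edges, are handled with \eref{eq: equivalence of double-line and triple-line norms} and the enriching operator, each such term carrying an extra power of $h$ from the smoothness of $\phi$ and hence contributing at order $h^{2\alpha}$. The delicate block is the penalty one, in which $\tfrac1\epsilon c_h(u_h\cdot n_h - \tilde g,\, \tilde\phi\cdot n_h)$ appears together with the mismatch between $\tilde\phi\cdot n_h$ and the exact condition $\phi\cdot n = 0$. Here one must balance the geometric discrepancy $|n_h - n| = O(h)$ against the fact, read off from \tref{thm: well-posedness} and \tref{thm: H1 error estimate}, that $u_h\cdot n_h - \tilde g$ is itself of order $\epsilon$; the discrete lifting theorem and its attendant inf--sup condition then let the discrete normal stress $\epsilon^{-1}\Pi_h^\partial(u_h\cdot n_h - \tilde g)$ be bounded uniformly and paired against the dual normal trace without any loss of a reciprocal power of $\epsilon$.

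I expect the main obstacle to be precisely this interaction of the penalty consistency error with the domain perturbation along the curved boundary inside the duality argument. In contrast to the interior $h$-terms, the penalty error does not acquire an extra power of $h$ under duality, so the crux is to show that it enters $\|e\|_{L^2(\Omega_h)}^2$ multiplied by a factor of order $\|e\|_{L^2(\Omega_h)}$ rather than $\epsilon^{-1}\|e\|_{L^2(\Omega_h)}$, thereby producing a clean $O(\epsilon)$ and not an $O(\epsilon^{1/2})$ or worse. Securing this cancellation, while controlling the normal-trace approximation $\tilde\phi\cdot n_h$ of the dual solution, is the technical heart of the proof, and it is exactly where the Crouzeix--Raviart structure, namely the coincidence of the velocity degrees of freedom with those of $n_h$ on $\Gamma_h$, turns out to be decisive.
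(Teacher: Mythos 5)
Your overall route is the same as the paper's: an Aubin--Nitsche duality argument based on the dual slip--Stokes problem with homogeneous boundary data, interpolation of the dual velocity by $\Pi_h$, and resolution of the penalty block by pairing $\lambda_h = \frac1\epsilon\Pi_h^\partial(u_h\cdot n_h - \tilde g)$ against the dual normal stress and invoking the Lagrange-multiplier stability \eref{eq: stability for Lagrange multiplier} that comes from the discrete lifting theorem --- this is exactly how the paper obtains the clean $O(\epsilon)$, so your identification of the crux and of the tool that resolves it is correct. (One small repair: your dual source $e|_\Omega$ is not well defined, since $u_h$ lives only on $\Omega_h$ and in general $\Omega \not\subset \Omega_h$; the paper instead tests against an arbitrary unit-norm $\varphi \in C_0^\infty(\Omega_h)^N$ extended by zero, which sidesteps this.)

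There is, however, a genuine gap in your treatment of the consistency terms, and it is precisely where the rate $h^{2\alpha}$ is decided. You assert that the nonconforming consistency contributions gain ``an extra power of $h$ from the smoothness of $\phi$'' and hence land at $O(h^{2\alpha})$. That is not the actual mechanism and, as stated, it fails for $N=2$: the generic residual bound of \lref{lem: asymptotic Galerkin orthogonality}(ii) applied to the dual interpolant gives only $|\mathrm{Res}(w_h)| \le Ch$, which is short of the required $O(h^2)$ when $\alpha = 1$ (for $N=3$, where $2\alpha = 1$, the naive bound happens to suffice). The paper needs a second, structurally different residual estimate, \lref{lem: another estimate of Res(v)}: the residual is rewritten through signed boundary-skin integrals, and the properties $\operatorname{div} w = 0$ in $\Omega$ and $w\cdot n = 0$ on $\Gamma$ of the dual solution are combined with the superconvergent flux estimate of \lref{lem: error estimate for flux} (through \eref{eq1: L2 error of u cdot nh - g}) to upgrade the boundary term $c_h(\tilde\lambda, \tilde w\cdot n_h)$ to $O(h^{2\alpha})$; smoothness alone cannot produce this gain --- it is the vanishing of the exact normal flux, measured through the Crouzeix--Raviart midpoint functionals, that does. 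The same flux machinery (\lref{lem: error estimate of u dot nh - g}, not the balancing of $|n_h - n\circ\pi| = O(h)$ against ``$u_h\cdot n_h - \tilde g = O(\epsilon)$'' that you describe) is what bounds the primal term $(\Pi_h^\partial(\tilde u\cdot n_h - \tilde g), \mu_h)_{\Gamma_h}$ arising in the boundary block by $Ch^{2\alpha}$; your sketch conflates this interpolation-type error with the discrete penalty identity. Without these two applications of the flux lemma, your argument delivers only $O(h + \epsilon)$, i.e.\ no improvement over \tref{thm: H1 error estimate} in the two-dimensional case.
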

The proofs of Theorems \ref{thm: well-posedness}--\ref{thm: L2 error estimate} will be given in Sections \ref{sec4}--\ref{sec6}, respectively.

\section{Auxiliary Lemmas} \label{sec3}
\subsection{Discrete $H^{1/2}$-norm}
It is well known that there exists a right continuous inverse of the trace operator $H^1(\Omega)^N \to H^{1/2}(\Gamma) ; \; v \mapsto (v\cdot n)|_\Gamma$, which we call a \emph{lifting operator} with respect to the normal component.
We need its analogue in the Crouzeix--Raviart element case.
However, since functions having jump discontinuities do not belong to $H^{1/2}$, we devise a discrete $H^{1/2}(\Gamma_h)$-norm for $\mu_h \in \Lambda_h$ as follows:
\begin{equation*}
	\|\mu_h\|_{1/2, \Lambda_h} = \Big( \| E_h^\partial \mu_h \|_{H^{1/2}(\Gamma_h)}^2 + \sum_{e \in \mathcal E_h^\partial} \sum_{e' \in \mathcal E_h^\partial(e)} h_e^{N-2} |\mu_h(m_e) - \mu_h(m_{e'})|^2 + h\|\mu_h\|_{ L^2(\Gamma_h) }^2 \Big)^{1/2}.
\end{equation*}
Here, $E_h^\partial: \Lambda_h \to \overline\Lambda_h$ is a kind of \emph{enriching operators} (cf.\ \cite[Appendix B]{BSZ14}) defined by
\begin{equation*}
	E_h^\partial \mu_h = \sum_{p\in\mathcal V_h(\Gamma_h)} \Big( \frac1{\# \mathcal E_h^\partial(p)} \sum_{e \in \mathcal E_h^\partial(p)} \mu_h(m_e) \Big) \bar\phi_p,
\end{equation*}
where $\mathcal V_h(\Gamma_h) = \mathcal V_h \cap \Gamma_h$, $\mathcal E_h^\partial(p) = \{e \in \mathcal E_h^\partial \,:\, p \in e\}$ means the boundary elements sharing the vertex $p$, and $\bar\phi_p \in \bar V_h$ is a nodal basis of the conforming P1 functions given in \sref{sec2}.
Note that, as a result of the regularity of meshes, the number of elements $\# \mathcal E_h^\partial(p)$ is bounded independently of $p$ and $h$.
Moreover, $\mathcal E_h^\partial(e) = \{ e' \in \mathcal E_h^\partial \,:\, e\cap e' \neq \emptyset \}$ denotes the neighboring boundary edges around $e$.

The discrete $H^{1/2}$-norm is compatible with the usual $H^{1/2}$-norm as follows.
\begin{lem} \label{lem: compatibility with usual H1/2 norm}
	If $\mu \in H^{1/2}(\Gamma_h)$, then
	\begin{equation*}
		\|\Pi_h^\partial\mu\|_{1/2,\Lambda_h} \le C \|\mu\|_{H^{1/2}(\Gamma_h)}.
	\end{equation*}
\end{lem}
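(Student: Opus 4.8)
The plan is to bound separately the three contributions that make up $\|\Pi_h^\partial\mu\|_{1/2,\Lambda_h}^2$: the enriched conforming part $\|E_h^\partial\Pi_h^\partial\mu\|_{H^{1/2}(\Gamma_h)}^2$, the discrete difference seminorm $\sum_{e}\sum_{e'\in\mathcal E_h^\partial(e)}h_e^{N-2}|\Pi_h^\partial\mu(m_e)-\Pi_h^\partial\mu(m_{e'})|^2$, and the scaled term $h\|\Pi_h^\partial\mu\|_{L^2(\Gamma_h)}^2$, showing that each is $\le C\|\mu\|_{H^{1/2}(\Gamma_h)}^2$.

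The latter two are routine. Since $\Pi_h^\partial$ is the $L^2(\Gamma_h)$-orthogonal projection onto piecewise constants it is $L^2$-stable, so $h\|\Pi_h^\partial\mu\|_{L^2(\Gamma_h)}^2\le h\|\mu\|_{L^2(\Gamma_h)}^2\le\|\mu\|_{H^{1/2}(\Gamma_h)}^2$ because $h\ll1$. For the middle term I would write $\Pi_h^\partial\mu(m_e)-\Pi_h^\partial\mu(m_{e'})=\frac1{|e||e'|}\int_e\int_{e'}(\mu(x)-\mu(y))\,ds_y\,ds_x$, apply Cauchy--Schwarz, and then use the shape-regularity facts $|e|\sim|e'|\sim h_e^{N-1}$ together with $|x-y|\le Ch_e$ for $x\in e$, $y\in e'$ on neighbouring faces to obtain $h_e^{N-2}|\Pi_h^\partial\mu(m_e)-\Pi_h^\partial\mu(m_{e'})|^2\le C\int_e\int_{e'}\frac{|\mu(x)-\mu(y)|^2}{|x-y|^N}\,ds_y\,ds_x$. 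Summing over the finitely overlapping pairs of neighbours bounds this by the Gagliardo seminorm $|\mu|_{H^{1/2}(\Gamma_h)}^2$.

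The main obstacle is the first term. I would recognise $\mathcal I_h:=E_h^\partial\Pi_h^\partial$ as a Cl\'ement/Scott--Zhang-type quasi-interpolation onto the conforming space $\overline\Lambda_h$, whose coefficient $c_p$ at a vertex $p$ is the average over $\mathcal E_h^\partial(p)$ of the edge means of $\mu$; note that the resulting function is continuous on $\Gamma_h$, hence in $H^1(\Gamma_h)$. First I establish local $L^2$-stability $\|\mathcal I_h\mu\|_{L^2(e)}\le C\|\mu\|_{L^2(\omega_e)}$ over a fixed patch $\omega_e$, via $\|\mathcal I_h\mu\|_{L^2(e)}^2\sim h_e^{N-1}\sum_{p\in e}|c_p|^2$ and bounding each $c_p$ by Jensen and Cauchy--Schwarz. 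Next, since $\mathcal I_h$ reproduces constants, the surface-gradient seminorm on a face satisfies $|\mathcal I_h\mu|_{H^1(e)}^2\sim h_e^{N-3}\sum_{p,p'}|c_p-c_{p'}|^2$; estimating the differences $c_p-c_{p'}$ of edge means by a Poincar\'e inequality on the connected patch $\omega_e$ makes the powers of $h_e$ cancel and yields $|\mathcal I_h\mu|_{H^1(e)}^2\le C\|\nabla_{\Gamma_h}\mu\|_{L^2(\omega_e)}^2$, whence $H^1$-stability after summation. With $\mathcal I_h$ bounded on both $L^2(\Gamma_h)$ and $H^1(\Gamma_h)$ uniformly in $h$, the estimate $\|\mathcal I_h\mu\|_{H^{1/2}(\Gamma_h)}\le C\|\mu\|_{H^{1/2}(\Gamma_h)}$ follows by interpolation, using $H^{1/2}(\Gamma_h)=[L^2(\Gamma_h),H^1(\Gamma_h)]_{1/2}$.

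The delicate point I would watch is the uniformity in $h$ of this last interpolation step: the equivalence between the interpolation norm and the intrinsic Gagliardo $H^{1/2}(\Gamma_h)$-norm a priori depends on the geometry of the varying domain $\Gamma_h$. I would resolve this by invoking that, for $h\ll1$, $\Gamma_h$ is uniformly Lipschitz as a small perturbation of the fixed smooth boundary $\Gamma$, so the equivalence constants may be chosen independent of $h$; alternatively one may transport the estimate to the fixed surface $\Gamma$. Collecting the three bounds completes the proof.
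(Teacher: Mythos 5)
Your proof follows essentially the same route as the paper's: the same decomposition into the three terms of $\|\cdot\|_{1/2,\Lambda_h}$, the identical Cauchy--Schwarz/Gagliardo-seminorm computation for the difference term, and for the enriched term the local $L^2$-stability of $E_h^\partial\Pi_h^\partial$ via coefficient bounds together with $H^1$-stability via constant reproduction and a Poincar\'e/Bramble--Hilbert estimate on patches, concluded by $L^2$--$H^1$ interpolation. Your explicit attention to the $h$-uniformity of the identification $H^{1/2}(\Gamma_h)=[L^2(\Gamma_h),H^1(\Gamma_h)]_{1/2}$ on the varying surface $\Gamma_h$ is a point the paper leaves implicit, but it does not alter the argument.
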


We also state discrete $H^{1/2}$-stability when $n_h$ is involved.
\begin{lem} \label{lem: compatibility with usual H1/2 norm when nh is involved}
	Let $\mu \in H^{1/2}(\Gamma_h)$, $v \in H^{1/2}(\Gamma_h)^N$, and $A \in H^{1/2}(\Gamma_h)^{N^2}$ be scalar, vector, and matrix functions respectively.
	Then we have
	\begin{align*}
		\|(\Pi_h^\partial \mu) n_h\|_{1/2, \Lambda_h} &\le C \|\mu\|_{H^{1/2}(\Gamma_h)}, \\
		\|(\Pi_h^\partial v) \cdot n_h\|_{1/2, \Lambda_h} &\le C \|v\|_{H^{1/2}(\Gamma_h)}, \\
		\|(\Pi_h^\partial A) n_h \cdot n_h\|_{1/2, \Lambda_h} &\le C \|A\|_{H^{1/2}(\Gamma_h)}.
	\end{align*}
\end{lem}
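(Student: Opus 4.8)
The plan is to reduce each of the three estimates to \lref{lem: compatibility with usual H1/2 norm} by splitting off a genuinely $H^{1/2}$-regular part, the remaining error being controlled by the smallness of the discrepancy between the piecewise-constant normal $n_h$ and a smooth extension of the true normal. I will describe the argument for the second (vector) estimate; the first and third are entirely analogous, with $v$ replaced by $\mu\tilde n$ and by $\tilde n^\top A\tilde n$ in the role played below.

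First I fix a smooth extension $\tilde n$ of the unit outer normal of $\Gamma$ to a tubular neighbourhood (e.g.\ $\tilde n(x) = n(\pi(x))$ with $\pi$ the nearest-point projection onto $\Gamma$), so that $\tilde n|_{\Gamma_h}$ is defined and smooth for $h\ll1$. By (H5) and the smoothness of $\Gamma$ one has the geometric (domain-perturbation) bound $\bigl| n_h|_e - \tilde n(x)\bigr| \le Ch_e$ for every $x\in e$ and $e\in\mathcal E_h^\partial$; this is the fact that makes the lemma work. Writing $w := v\cdot\tilde n \in H^{1/2}(\Gamma_h)$ and using that multiplication by the fixed smooth field $\tilde n$ is bounded on $H^{1/2}(\Gamma_h)$ uniformly in $h$, \lref{lem: compatibility with usual H1/2 norm} gives $\|\Pi_h^\partial w\|_{1/2,\Lambda_h}\le C\|w\|_{H^{1/2}(\Gamma_h)}\le C\|v\|_{H^{1/2}(\Gamma_h)}$. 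It therefore remains to estimate the correction $r_h := (\Pi_h^\partial v)\cdot n_h - \Pi_h^\partial w\in\Lambda_h$, whose midpoint values are $r_h(m_e) = \frac1{|e|}\int_e v\cdot(n_h|_e - \tilde n)\,ds$; the perturbation bound together with $|e|\sim h_e^{N-1}$ then yields the pointwise estimate $|r_h(m_e)|\le Ch_e^{(3-N)/2}\|v\|_{L^2(e)}$.

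Next I bound the three constituents of $\|r_h\|_{1/2,\Lambda_h}$ using only this pointwise estimate. Since $r_h$ is piecewise constant, direct summation over edges (and over the boundedly many neighbours $e'\in\mathcal E_h^\partial(e)$, with $h_e\sim h_{e'}$) gives $h\|r_h\|_{L^2(\Gamma_h)}^2 \le Ch^3\|v\|_{L^2(\Gamma_h)}^2$ for the $L^2$ term and $\sum_e\sum_{e'}h_e^{N-2}|r_h(m_e)-r_h(m_{e'})|^2\le Ch\|v\|_{L^2(\Gamma_h)}^2$ for the jump term, both of which are $\le C\|v\|_{H^{1/2}(\Gamma_h)}^2$. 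For the enriching term the nodal values of $E_h^\partial r_h$ inherit the same smallness, so $\|E_h^\partial r_h\|_{L^2(\Gamma_h)}^2\le Ch^2\|v\|_{L^2(\Gamma_h)}^2$; interpolating $\|\cdot\|_{H^{1/2}}^2\le C\|\cdot\|_{L^2}\|\cdot\|_{H^1}$ and applying an inverse inequality $\|E_h^\partial r_h\|_{H^1}\le Ch^{-1}\|E_h^\partial r_h\|_{L^2}$ to the P1 enrichment gives $\|E_h^\partial r_h\|_{H^{1/2}(\Gamma_h)}^2\le Ch\|v\|_{L^2(\Gamma_h)}^2\le C\|v\|_{H^{1/2}(\Gamma_h)}^2$. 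Collecting the three bounds yields $\|r_h\|_{1/2,\Lambda_h}\le C\|v\|_{H^{1/2}(\Gamma_h)}$ (in fact with a spare factor $h^{1/2}$, so the correction is higher order), and the triangle inequality $\|(\Pi_h^\partial v)\cdot n_h\|_{1/2,\Lambda_h}\le\|\Pi_h^\partial w\|_{1/2,\Lambda_h}+\|r_h\|_{1/2,\Lambda_h}$ completes the proof.

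The step I expect to be the main obstacle is the enriching term, i.e.\ controlling $\|E_h^\partial r_h\|_{H^{1/2}(\Gamma_h)}$: the $H^{1/2}$-norm is nonlocal, the enrichment averages nodal values across edges on which $n_h$ jumps, and the inverse inequality I invoke must be handled with care on shape-regular (possibly non-quasi-uniform) boundary meshes, so that one may need to argue with local mesh sizes rather than the global $h$. All remaining ingredients---the geometric bound $\bigl|n_h-\tilde n\bigr|=O(h)$, the boundedness of the smooth multiplier on $H^{1/2}$, and the reduction to \lref{lem: compatibility with usual H1/2 norm}---are robust and transfer verbatim to the scalar and tensor cases, where one expands $n_e^\top A n_e - \tilde n^\top A\tilde n = (n_e-\tilde n)^\top A n_e + \tilde n^\top A(n_e - \tilde n)$ to recover the same $O(h_e)$ pointwise control.
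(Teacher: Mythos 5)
Your proof is correct and takes essentially the same route as the paper: the identical splitting $n_h = n\circ\pi + (n_h - n\circ\pi)$, with the smooth part reduced to \lref{lem: compatibility with usual H1/2 norm} via uniform boundedness of multiplication by $n\circ\pi$ on $H^{1/2}(\Gamma_h)$, and the correction $\Pi_h^\partial\big(v\cdot(n_h - n\circ\pi)\big)$ controlled term-by-term in the discrete norm using the geometric bound $\|n_h - n\circ\pi\|_{L^\infty(e)} \le Ch_e$. The only cosmetic difference is that you derive the $H^1(\Gamma_h)$ bound on the enriched correction from its $L^2$ bound by a local inverse inequality before interpolating, whereas the paper computes that $H^1$ bound directly from the nodal basis gradient estimates; these amount to the same calculation.
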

The proofs of Lemmas \ref{lem: compatibility with usual H1/2 norm} and \ref{lem: compatibility with usual H1/2 norm when nh is involved} will be given in Appendices \ref{sec: A.1} and \ref{sec: A.2}, respectively.

\subsection{Discrete lifting theorems with respect to the normal component}
Let us state a first version of discrete lifting theorems.
\begin{lem}
	For all $\mu_h \in \Lambda_h$ we obtain
	\begin{equation} \label{eq1: discrete inf-sup for c}
		C\Big( \sum_{e\in\mathcal E_h^\partial} h_e \|\mu_h\|_{L^2(e)}^2 \Big)^{1/2} \le \sup_{v_h\in V_h} \frac{c_h(\mu_h, v_h\cdot n_h)}{\|v_h\|_{V_h}}.
	\end{equation}
\end{lem}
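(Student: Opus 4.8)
The plan is to prove the bound by an explicit construction of a near-optimal test function, exploiting the characteristic feature of the Crouzeix--Raviart setting: the velocity degrees of freedom (the midpoints $m_e$) coincide with the points at which the piecewise-constant normal $n_h$ is evaluated. First I would rewrite the form $c_h(\mu_h, v_h\cdot n_h)$ in a fully discrete manner. Since $\mu_h$ is piecewise constant on $\Gamma_h$, we have $\Pi_h^\partial\mu_h = \mu_h$. On each $e \in \mathcal E_h^\partial$ the normal is a constant unit vector $n_h|_e$, and since $v_h|_e$ is affine, its edge-average equals its midpoint value; hence $\Pi_h^\partial(v_h\cdot n_h)$ is piecewise constant with value $v_h(m_e)\cdot n_h|_e$ on $e$. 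Therefore
\[
	c_h(\mu_h, v_h\cdot n_h) = \sum_{e \in \mathcal E_h^\partial} |e|\, \mu_h(m_e)\, \big(v_h(m_e)\cdot n_h|_e\big).
\]

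Next I would construct the candidate test function. Writing $\{\phi_e\}$ for the Crouzeix--Raviart nodal basis, I set $v_h = \sum_{e \in \mathcal E_h^\partial} h_e\, \mu_h(m_e)\, n_h|_e\, \phi_e$, which lies in $V_h$ as a combination of basis functions and whose interior-midpoint values vanish. The crucial point is that this prescribes $v_h(m_e) = h_e\mu_h(m_e)\, n_h|_e$, so that $v_h(m_e)\cdot n_h|_e = h_e\mu_h(m_e)$ because $|n_h|_e| = 1$. Substituting into the formula above and using $\|\mu_h\|_{L^2(e)}^2 = |e|\,\mu_h(m_e)^2$ gives, with $S := \sum_{e \in \mathcal E_h^\partial} h_e\|\mu_h\|_{L^2(e)}^2$,
\[
	c_h(\mu_h, v_h\cdot n_h) = \sum_{e \in \mathcal E_h^\partial} h_e\, |e|\, \mu_h(m_e)^2 = S.
\]

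Then I would bound the denominator $\|v_h\|_{V_h}$. Each boundary basis function $\phi_e$ is supported on its single adjacent element $T_e$, and at most $N+1$ of them are nonzero on any fixed element, so a finite-overlap argument combined with the local estimates $\|\phi_e\|_{H^m(T_e)} \le Ch_e^{N/2-m}$ yields $\sum_{T} \|\nabla v_h\|_{L^2(T)}^2 \le C\sum_{e}(h_e\mu_h(m_e))^2 h_e^{N-2}$, with an analogous $L^2$-contribution of order $h_e^N$ that is lower order for $h \ll 1$. Since $(h_e\mu_h(m_e))^2 h_e^{N-2} = h_e^N\mu_h(m_e)^2 \sim h_e\|\mu_h\|_{L^2(e)}^2$, this gives $\|v_h\|_{V_h}^2 \le CS$. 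Combining the two estimates, $c_h(\mu_h, v_h\cdot n_h)/\|v_h\|_{V_h} \ge S/(CS^{1/2}) = S^{1/2}/C$, which is precisely \eref{eq1: discrete inf-sup for c}.

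The hard part will not be any single estimate but rather the two bookkeeping facts that make the construction work: that the edge-average of an affine $v_h$ equals its midpoint value (so that $\Pi_h^\partial$ acts diagonally on midpoint data and the form collapses to a clean midpoint sum), and the finite-overlap bound controlling $\|v_h\|_{V_h}$ by the diagonal quantity $S$. The essential mechanism, however, is the alignment of the degrees of freedom of $V_h$ with the faces on which $n_h$ is constant, which is exactly what permits us to set $v_h(m_e)$ equal to a scalar multiple of $n_h|_e$; this is the structural advantage unavailable in the P1/P1 setting of \cite{KOZ16}.
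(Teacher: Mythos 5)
Your proposal is correct and follows essentially the same route as the paper: the paper's proof defines the identical test function $v_h = \sum_{e\in\mathcal E_h^\partial} h_e\,\mu_h(m_e)\,n_h(m_e)\,\phi_e$, observes $c_h(\mu_h, v_h\cdot n_h) = \sum_{e\in\mathcal E_h^\partial} h_e\|\mu_h\|_{L^2(e)}^2$, and bounds $\|v_h\|_{V_h}^2$ by the same quantity using the local estimate $\|\phi_e\|_{H^1(T_e)}\le Ch_e^{N/2-1}$. The only cosmetic difference is bookkeeping: where you use $|e|\sim h_e^{N-1}$ to convert midpoint values into $L^2(e)$-norms, the paper invokes the equivalent inverse inequality $\|\mu_h\|_{L^\infty(e)}\le Ch_e^{(1-N)/2}\|\mu_h\|_{L^2(e)}$, and your explicit finite-overlap argument for $\|v_h\|_{V_h}$ is if anything slightly more careful than the paper's direct equality.
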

\begin{proof}
	Define $v_h \in V_h$ by $v_h = \sum_{e\in\mathcal E_h^\partial} h_e \mu_h(m_e) n_h(m_e) \phi_e$.
	Then we see that $c_h(\mu_h, v_h\cdot n_h) = \sum_{e \in \mathcal E_h^\partial} h_e \|\mu_h\|_{L^2(e)}^2$ and that
	\begin{align*}
		\|v_h\|_{V_h}^2 &= \sum_{e\in\mathcal E_h^\partial} h_e^2 |\mu_h(m_e) n_h(m_e)|^2 \|\phi_e\|_{H^1(T_e)}^2 \le \sum_{e\in\mathcal E_h^\partial} h_e^2 \|\mu_h\|_{L^\infty(e)}^2 \times C h_e^{N-2} \le C \sum_{e\in\mathcal E_h^\partial} h_e \|\mu_h\|_{L^2(e)}^2,
	\end{align*}
	where we have used a local inverse inequality $\|\mu_h\|_{L^\infty(e)} \le Ch_e^{(1-N)/2} \|\mu_h\|_{L^2(e)}$.
	Combining the two relations, we obtain the desired inf-sup condition.
\end{proof}

We need a more refined discrete lifting theorem than the one above.
\begin{lem} \label{lem: discrete lifting theorem}
	For $\mu_h \in \Lambda_h$ there exists $v_h \in V_h$ satisfying $(v_h \cdot n_h)(m_e) = \mu_h(m_e)$ for all $e \in \mathcal E_h^\partial$, together with the stability estimate
	\begin{equation} \label{eq: discrete lift stability}
		\|v_h\|_{V_h} \le C\|\mu_h\|_{1/2, \Lambda_h}.
	\end{equation}
\end{lem}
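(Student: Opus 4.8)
The plan is to split $\mu_h$ into a smooth part, captured by the enriching operator, and a rough remainder, and to lift each separately: the smooth part through the \emph{continuous} lifting theorem followed by Crouzeix--Raviart interpolation, and the remainder through the elementary nodal lift $\mu_h\mapsto\sum_e\mu_h(m_e)n_h(m_e)\phi_e$ already used in the previous lemma. Concretely, I would set $\bar\mu_h := E_h^\partial\mu_h\in\overline\Lambda_h$, which is continuous and piecewise affine, hence lies in $H^{1/2}(\Gamma_h)$ with $\|\bar\mu_h\|_{H^{1/2}(\Gamma_h)}\le\|\mu_h\|_{1/2,\Lambda_h}$ directly from the definition of the discrete norm. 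I would then seek $v_h = v_h^c + v_h^r$, where $v_h^c$ lifts the smooth part $\bar\mu_h$ and $v_h^r$ corrects the midpoint defect so that the exact constraint $(v_h\cdot n_h)(m_e)=\mu_h(m_e)$ is restored.

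For the smooth part I would first transfer $\bar\mu_h$ to $\Gamma$ along the normal projection $\pi:\Gamma_h\to\Gamma$ (a bi-Lipschitz map close to the identity for $h\ll1$), obtaining $\mu^\ast := \bar\mu_h\circ\pi^{-1}\in H^{1/2}(\Gamma)$ with $\|\mu^\ast\|_{H^{1/2}(\Gamma)}\le C\|\bar\mu_h\|_{H^{1/2}(\Gamma_h)}$. Applying the continuous lifting operator $L$ gives $V := L\mu^\ast\in H^1(\Omega)^N$ with $V\cdot n = \mu^\ast$ on $\Gamma$ and $\|V\|_{H^1(\Omega)}\le C\|\mu^\ast\|_{H^{1/2}(\Gamma)}$. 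Extending $V$ to $\tilde V := PV$ and defining $v_h^c := \Pi_h\tilde V\in V_h$ is legitimate precisely because $\Pi_h$ needs only $H^1$-regularity (it uses edge averages) and is $H^1$-stable, so that $\|v_h^c\|_{V_h}\le C\|\tilde V\|_{H^1(\Omega_h)}\le C\|\mu_h\|_{1/2,\Lambda_h}$. Since $n_h$ is constant on each $e\in\mathcal E_h^\partial$, the key identity $(v_h^c\cdot n_h)(m_e)=\frac1{|e|}\int_e\tilde V\cdot n_h\,ds$ holds, which is where the coincidence of velocity and normal degrees of freedom is exploited.

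Next I would set $r_h(m_e):=\mu_h(m_e)-(v_h^c\cdot n_h)(m_e)$ and take $v_h^r:=\sum_{e\in\mathcal E_h^\partial}r_h(m_e)\,n_h(m_e)\,\phi_e\in V_h$, which yields $(v_h\cdot n_h)(m_e)=\mu_h(m_e)$ for all $e$. By the basis estimate $\|\phi_e\|_{H^1(T_e)}\le Ch_e^{N/2-1}$ and the finite overlap of supports, $\|v_h^r\|_{V_h}^2\le C\sum_{e}h_e^{N-2}|r_h(m_e)|^2$, so everything reduces to estimating the defect. I would split $r_h(m_e)=\delta_h(m_e)+s_h(m_e)$ with $\delta_h(m_e):=\mu_h(m_e)-\bar\mu_h(m_e)$ and $s_h(m_e):=\bar\mu_h(m_e)-\frac1{|e|}\int_e\tilde V\cdot n_h\,ds$. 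Because $E_h^\partial$ replaces $\mu_h(m_e)$ by an average of its neighbours, $\delta_h(m_e)$ is a linear combination of the differences $\mu_h(m_e)-\mu_h(m_{e'})$ with $e'\in\mathcal E_h^\partial(e)$, so that $\sum_e h_e^{N-2}|\delta_h(m_e)|^2$ is bounded by the second term of $\|\mu_h\|_{1/2,\Lambda_h}^2$.

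The hard part will be the estimate of $s_h$, and this is exactly where domain perturbation enters. Using that $\bar\mu_h$ is affine on $e$ (so $\bar\mu_h(m_e)=\frac1{|e|}\int_e\bar\mu_h\,ds$) and choosing the extended normal $\tilde n$ to be constant along the fibres of $\pi$, so that $\tilde n|_{\Gamma_h}=n\circ\pi$, I would rewrite $\bar\mu_h-\tilde V\cdot n_h$ on $e$ as $(\tilde V\circ\pi-\tilde V)\cdot\tilde n + \tilde V\cdot(\tilde n-n_h)$, using $V\cdot n=\mu^\ast$ on $\Gamma$. The first summand is controlled by the $H^1$-norm of $\tilde V$ over the $O(h^2)$-thin tube between $\Gamma$ and $\Gamma_h$, while the second is $O(h_e)$ pointwise since $|\tilde n-n_h|\le Ch_e$ on $e$; after multiplying by $h_e^{N-2}$ and summing, both contributions should be bounded by $Ch\,\|\tilde V\|_{H^1}^2$ plus $Ch\|\mu_h\|_{L^2(\Gamma_h)}^2$, hence by $\|\mu_h\|_{1/2,\Lambda_h}^2$ (the third term of the discrete norm absorbing the normal-perturbation part). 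I expect the delicate points to be the rigorous transfer estimates between $\Gamma$ and $\Gamma_h$ and the choice of $\tilde n$ that makes the leading geometric term cancel, both of which rely on the domain perturbation estimates of \sref{sec3}. Collecting the three bounds then gives $\|v_h\|_{V_h}\le\|v_h^c\|_{V_h}+\|v_h^r\|_{V_h}\le C\|\mu_h\|_{1/2,\Lambda_h}$, which is \eref{eq: discrete lift stability}.
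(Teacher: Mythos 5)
Your proposal is correct, and its overall skeleton --- enrich, lift, interpolate with $\Pi_h$, then restore the exact midpoint constraint by the nodal correction $\sum_{e} r_h(m_e)\, n_h(m_e)\,\phi_e$ --- matches the paper's, but the lifting itself goes a genuinely different route. The paper enriches the \emph{vector} datum: it sets $v_h = \Pi_h L_h E_h^\partial(\mu_h n_h)$ at interior midpoints, where $L_h$ is a full trace lifting posed on the polyhedral domain $\Omega_h$ itself; the main difficulty then concentrates in proving $\|E_h^\partial(\mu_h n_h)\|_{H^{1/2}(\Gamma_h)} \le C\|\mu_h\|_{1/2,\Lambda_h}$, which the paper handles by comparing $E_h^\partial(\mu_h n_h)$ with $(E_h^\partial \mu_h)\,n\circ\pi$ and interpolating between $L^2$- and $H^1$-bounds (its Steps 2--3). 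You instead enrich the \emph{scalar} $\mu_h$, transport $E_h^\partial\mu_h$ to $\Gamma$ by $\pi^{-1}$, and invoke the continuous normal-trace lifting on the smooth domain $\Omega$ (note the paper's $L$ is a scalar trace inverse, so one should take e.g.\ $V = L(\mu^\ast n)$ componentwise to get $V\cdot n = \mu^\ast$ on $\Gamma$); the $H^{1/2}$-control of the lifted datum is then free, being the first term in the definition of $\|\mu_h\|_{1/2,\Lambda_h}$, and the roughness of $n_h$ resurfaces only in your correction term $s_h$, which the boundary-skin estimate \eref{eq: boundary-skin estimate 2} together with $\|n\circ\pi - n_h\|_{L^\infty(e)}\le Ch_e$ controls with a factor $h_e$ to spare; your $\delta_h$-part coincides in substance with the $\mu_h$-difference part of the paper's Step 1. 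What each approach buys: the paper never leaves $\Omega_h$, so it needs no transfer of $H^{1/2}$-functions between $\Gamma_h$ and $\Gamma$, but must work to place the discontinuous $n_h$ inside an $H^{1/2}$-object; your route makes the $H^{1/2}$-bound trivial but hinges on the uniform bi-Lipschitz equivalence of $H^{1/2}(\Gamma_h)$ and $H^{1/2}(\Gamma)$ under $\pi$ --- a standard domain-perturbation fact for $h\ll 1$, but the one ingredient of your argument not already proved in \sref{sec3}, so it should be stated and justified (or cited) explicitly in a complete write-up.
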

The proof of this lemma will be given in Appendix \ref{sec: A.3}.

\begin{cor}
	For all $\mu_h \in \Lambda_h$ we obtain
	\begin{equation} \label{eq2: discrete inf-sup for c}
		C\|\mu_h\|_{-1/2, \Lambda_h} \le \sup_{v_h \in V_h} \frac{c_h(\mu_h, v_h\cdot n_h)}{\|v_h\|_{V_h}}.
	\end{equation}
\end{cor}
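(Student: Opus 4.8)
The plan is to establish \eref{eq2: discrete inf-sup for c} by a duality argument that converts the stability estimate of \lref{lem: discrete lifting theorem} into the stated inf-sup bound. Recall that the negative-order norm is, by definition, the dual of $\|\cdot\|_{1/2,\Lambda_h}$ realized through the $L^2(\Gamma_h)$-pairing, namely
\begin{equation*}
	\|\mu_h\|_{-1/2,\Lambda_h} = \sup_{0 \neq \eta_h \in \Lambda_h} \frac{(\mu_h, \eta_h)_{\Gamma_h}}{\|\eta_h\|_{1/2,\Lambda_h}}.
\end{equation*}
Since every element of $\Lambda_h$ is piecewise constant, $\Pi_h^\partial$ acts as the identity there, so that $(\mu_h, \eta_h)_{\Gamma_h} = c_h(\mu_h, \eta_h)$; thus it suffices to bound this pairing from below by the right-hand side of \eref{eq2: discrete inf-sup for c} multiplied by $\|\eta_h\|_{1/2,\Lambda_h}$.

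First I would fix an arbitrary $\eta_h \in \Lambda_h$ and apply \lref{lem: discrete lifting theorem} to produce $v_h \in V_h$ with $(v_h\cdot n_h)(m_e) = \eta_h(m_e)$ for every $e \in \mathcal E_h^\partial$ and $\|v_h\|_{V_h} \le C\|\eta_h\|_{1/2,\Lambda_h}$. The crucial observation is then that $\Pi_h^\partial(v_h\cdot n_h) = \eta_h$: on each boundary edge the trace of $v_h$ is affine and $n_h$ is constant, so $v_h\cdot n_h$ is affine on $e$, whence its average over $e$ coincides with its midpoint value $(v_h\cdot n_h)(m_e) = \eta_h(m_e)$; since both sides are piecewise constant and agree at all midpoints, they are equal. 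Consequently
\begin{equation*}
	c_h(\mu_h, v_h\cdot n_h) = (\Pi_h^\partial \mu_h, \Pi_h^\partial(v_h\cdot n_h))_{\Gamma_h} = (\mu_h, \eta_h)_{\Gamma_h}.
\end{equation*}

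Combining these facts, for this particular $v_h$ one obtains
\begin{equation*}
	\sup_{w_h \in V_h} \frac{c_h(\mu_h, w_h\cdot n_h)}{\|w_h\|_{V_h}} \ge \frac{c_h(\mu_h, v_h\cdot n_h)}{\|v_h\|_{V_h}} \ge \frac{(\mu_h, \eta_h)_{\Gamma_h}}{C\|\eta_h\|_{1/2,\Lambda_h}},
\end{equation*}
and taking the supremum over $\eta_h \in \Lambda_h$ yields \eref{eq2: discrete inf-sup for c} with the reciprocal of the lifting constant. I expect no essential obstacle beyond \lref{lem: discrete lifting theorem} itself, which is the genuinely hard ingredient and is assumed here; the only points requiring care are the precise reading of the dual norm and the identity $\Pi_h^\partial(v_h\cdot n_h) = \eta_h$, both of which hinge on the Crouzeix--Raviart degrees of freedom being the edge midpoints.
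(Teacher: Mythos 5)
Your proposal is correct and follows essentially the same route as the paper: realize the dual norm $\|\mu_h\|_{-1/2,\Lambda_h}$ through the pairing with elements of $\Lambda_h$, lift the (near-)maximizing $\eta_h$ to some $v_h \in V_h$ via \lref{lem: discrete lifting theorem}, and use that $c_h$ only sees midpoint values so that $c_h(\mu_h, v_h\cdot n_h) = (\mu_h,\eta_h)_{\Gamma_h}$. The paper's proof is the same argument stated with a maximizer $\lambda_h$ (available by finite dimensionality) and leaves implicit the identity $\Pi_h^\partial(v_h\cdot n_h) = \eta_h$, which you correctly justify from the affineness of $v_h\cdot n_h$ on each boundary edge.
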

\begin{proof}
	By the definition of the dual norm, there exists $\lambda_h\in\Lambda_h$ such that  $\|\mu_h\|_{-1/2,\Lambda_h} = \frac{c_h(\mu_h, \lambda_h)}{\|\lambda_h\|_{1/2,\Lambda_h}}$.
	We apply \lref{lem: discrete lifting theorem} to $\lambda_h$ to obtain some $v_h \in V_h$ such that $v_h \cdot n_h = \lambda_h$ at all $m_e$'s lying on $\Gamma_h$ and $\|v_h\|_{V_h} \le C\|\lambda_h\|_{1/2,\Lambda_h}$.
	It is now immediate to deduce \eref{eq2: discrete inf-sup for c}.
\end{proof}

\subsection{Estimates on the boundary-skin layer}
Let us introduce a tubular neighborhood of $\Gamma$ with width $\delta > 0$ by $\Gamma(\delta) = \{x\in\mathbb R^N \,:\, \operatorname{dist}(x,\Gamma) < \delta\}$.
For sufficiently small $\delta_0>0$, we know that (see \cite[Section 14.6]{GiTr98}) there holds a unique decomposition $\Gamma(\delta_0) \ni x = \bar x + t n(\bar x)$ with $\bar x \in \Gamma$.
The maps $\pi: \Gamma(\delta_0) \to \Gamma$; $x\mapsto \bar x$ and $d: \Gamma(\delta_0) \to \mathbb R$; $x\mapsto t$ imply the orthogonal projection to $\Gamma$ and the signed-distance function, respectively.
We fix a bounded smooth domain $\tilde\Omega$ that contains $\Omega \cup \Gamma(\delta_0)$.

If the mesh size $h$ is sufficiently small, we proved in \cite[Section 8]{KOZ16} that $\pi|_{\Gamma_h} : \Gamma_h \to \Gamma$ is a homeomorphism and that $|d(x)| \le Ch_e^2 =: \delta_e$ for $x \in e \in \mathcal E_h^\partial$.
Then the following \emph{boundary-skin estimates} are obtained:
\begin{align}
	\Big| \int_{\pi(e)} f \, ds - \int_e f\circ\pi \, ds \Big| &\le C\delta_e \|f\|_{L^1(e)}, && f \in L^1(e), \label{eq: boundary-skin estimate 1} \\
	\|f - f\circ\pi\|_{L^p(e)} &\le C\delta_e^{1-1/p} \|\nabla f\|_{L^p(\pi(e, \delta_e))}, && f \in W^{1,p}(\pi(e, \delta_e)), \label{eq: boundary-skin estimate 2} \\
	\|f\|_{L^p(\pi(e, \delta_e))} &\le C\delta_e^{1/p} \|f\|_{L^p(\pi(e))} + C\delta_e \|\nabla f\|_{L^p(\pi(e, \delta_e))}, && f \in W^{1,p}(\pi(e, \delta_e)), \label{eq: boundary-skin estimate 3}
\end{align}
where $p \in [1,\infty]$ and $\pi(e, \delta_e) := \{ \bar x + t n(\bar x) \in \mathbb R^2 \,:\, \bar x \in \pi(e), \; |t| < \delta_e \}$ denotes a tubular neighborhood of $\pi(e) \subset \Gamma$.
As a version of \eref{eq: boundary-skin estimate 3}, we also have (see \cite[Lemma A.1]{KaKe18})
\begin{equation*}
	\|f\|_{L^p( (\Omega_h\setminus\Omega) \cap \pi(e,\delta_e) )} \le C\delta_e^{1/p} \|f\|_{L^p(e)} + C\delta_e \|\nabla f\|_{L^p( (\Omega_h\setminus\Omega) \cap \pi(e,\delta_e) )}.
\end{equation*}
Adding up the estimates above for $e \in \mathcal E_h^\partial$, we obtain corresponding global estimates on boundary-skin layers.
In particular one has
\begin{equation} \label{eq: corollary of boundary-skin estimate 3}
	\|v\|_{L^2(\Omega_h \setminus \Omega)} \le Ch \|v\|_{H^1(\Omega_h)} \quad \forall v \in H^1(\Omega_h).
\end{equation}
Here we present its version in case of a nonconforming approximation.
For the proof, see \sref{sec: A.4}.
\begin{lem} \label{lem: a boundary-skin estimate for nonconforming approximation}
	For all $v \in V_h + H^1(\Omega_h)^N$ we obtain
	\begin{equation*}
		\|v\|_{L^2(\Omega_h \setminus \Omega)} \le Ch \vertiii{v}_{V_h}.
	\end{equation*}
\end{lem}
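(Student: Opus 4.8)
The plan is to localize $\Omega_h\setminus\Omega$ to the boundary elements and apply the element-wise boundary-skin estimate, exploiting that although $v$ is globally nonconforming, its restriction to each $T\in\mathcal T_h$ still lies in $H^1(T)$. Since the vertices of every boundary edge lie on $\Gamma$, the thin set $\Omega_h\setminus\Omega$ is contained in $\bigcup_{e\in\mathcal E_h^\partial}(\Omega_h\setminus\Omega)\cap\pi(e,\delta_e)$, where by the construction of \cite{KOZ16} each sliver $(\Omega_h\setminus\Omega)\cap\pi(e,\delta_e)$ sits inside the single interior element $T_e$ attached to $e$, and the overlap of these pieces is bounded by mesh regularity. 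On each $T_e$ we have $v|_{T_e}\in H^1(T_e)$, so I may invoke the variant of \eref{eq: boundary-skin estimate 3} due to \cite{KaKe18} with $p=2$, namely $\|v\|_{L^2((\Omega_h\setminus\Omega)\cap\pi(e,\delta_e))}\le C\delta_e^{1/2}\|v\|_{L^2(e)}+C\delta_e\|\nabla v\|_{L^2(T_e)}$, where $\|v\|_{L^2(e)}$ denotes the trace taken from $T_e$.

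Squaring, summing over $e\in\mathcal E_h^\partial$, and inserting $\delta_e=Ch_e^2\le Ch^2$ then yields
\[
\|v\|_{L^2(\Omega_h\setminus\Omega)}^2 \le C\sum_{e\in\mathcal E_h^\partial}h_e^2\|v\|_{L^2(e)}^2 + C\,h^4\sum_{T\in\mathcal T_h}\|\nabla v\|_{L^2(T)}^2 .
\]
The second sum is harmless, being bounded by $Ch^4\vertiii{v}_{V_h}^2\le Ch^2\vertiii{v}_{V_h}^2$ since $h\ll1$. The first sum is the crux. Bounding $h_e\le h$ turns it into $h^2\|v\|_{L^2(\Gamma_h)}^2$, and here I apply the trace inequality $\|v\|_{L^2(\Gamma_h)}\le C\|v\|_{L^2(\Omega_h)}^{1/2}\|v\|_{V_h}^{1/2}$ recorded in \sref{sec2}, which is assembled edge-by-edge and is therefore valid on the whole broken space $H^1(\mathcal T_h)$, hence for $v\in V_h+H^1(\Omega_h)^N$. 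Using $\|v\|_{L^2(\Omega_h)}\le\|v\|_{V_h}\le\vertiii{v}_{V_h}$, the first sum is thus controlled by $Ch^2\vertiii{v}_{V_h}^2$. Collecting the two bounds and taking square roots gives the claim.

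The point requiring care—and the genuine difference from the conforming corollary \eref{eq: corollary of boundary-skin estimate 3}—is the treatment of the boundary trace term $\sum_{e}h_e^2\|v\|_{L^2(e)}^2$. For an $H^1(\Omega_h)$ function one would bound $\|v\|_{L^2(\Gamma_h)}$ through the global trace operator into $L^2(\Gamma)$, but this route is unavailable once $v$ carries jumps across interior edges. The substitute is precisely the multiplicative (broken) trace inequality above, which tolerates nonconformity; it is also what keeps the estimate at order $O(h)$ rather than degrading to the suboptimal $O(h^{1/2})$ that a naive application of the scaled trace inequality $\|v\|_{L^2(e)}^2\le C(h_e^{-1}\|v\|_{L^2(T_e)}^2+h_e\|\nabla v\|_{L^2(T_e)}^2)$ element-by-element would produce. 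Observe that no enriching operator is needed for this particular lemma, and that the right-hand side may in fact be sharpened to the smaller norm $\|v\|_{V_h}$.
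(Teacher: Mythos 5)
Your scaffolding (localize $\Omega_h\setminus\Omega$ over boundary faces, apply the variant of \eref{eq: boundary-skin estimate 3} from \cite{KaKe18}, reduce everything to $\sum_{e}h_e^2\|v\|_{L^2(e)}^2$) is reasonable, but the step you yourself identify as the crux contains a genuine gap. You invoke the multiplicative trace inequality $\|v\|_{L^2(\Gamma_h)}\le C\|v\|_{L^2(\Omega_h)}^{1/2}\|v\|_{V_h}^{1/2}$ and justify it as being ``assembled edge-by-edge and therefore valid on the whole broken space $H^1(\mathcal T_h)$.'' That claim is false: take $v=\chi_{T_e}$, the indicator function of a single boundary element. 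Then $v\in H^1(\mathcal T_h)$, its broken gradient vanishes, so $\|v\|_{L^2(\Omega_h)}=\|v\|_{V_h}=|T_e|^{1/2}\sim h^{N/2}$, while $\|v\|_{L^2(\Gamma_h)}=|e|^{1/2}\sim h^{(N-1)/2}$; the inequality would force $1\le Ch^{1/2}$. The reason edge-by-edge assembly cannot produce it is that the correctly scaled elementwise trace inequality necessarily carries an inverse power of $h_e$, namely $\|v\|_{L^2(e)}^2\le C\bigl(h_e^{-1}\|v\|_{L^2(T_e)}^2+\|v\|_{L^2(T_e)}\|\nabla v\|_{L^2(T_e)}\bigr)$; inserting this into your first sum yields only $\sum_e h_e^2\|v\|_{L^2(e)}^2\le Ch\|v\|_{L^2(\Omega_h)}^2+Ch^2\|v\|_{L^2(\Omega_h)}\|v\|_{V_h}$, i.e.\ exactly the suboptimal $O(h^{1/2})$ rate you set out to avoid. (The display in \sref{sec2} that you quote is itself misstated in the generality $v\in H^1(\mathcal T_h)$, by the same counterexample; its legitimate content is the multiplicative trace inequality on the fixed Lipschitz domain $\Omega_h$, i.e.\ for conforming $v\in H^1(\Omega_h)$, where no jumps are present.)

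The inequality can be salvaged on the space actually relevant here, $v\in V_h+H^1(\Omega_h)^N$, in the weaker form $\|v\|_{L^2(\Gamma_h)}\le C\|v\|_{L^2(\Omega_h)}^{1/2}\vertiii{v}_{V_h}^{1/2}+Ch^{1/2}\vertiii{v}_{V_h}$, which would still close your computation (the extra term contributes $Ch^{3}\vertiii{v}_{V_h}^2$ to the first sum). But the only way to prove such a broken trace bound is to pass to a conforming companion: write $v_h=E_hv_h+(v_h-E_hv_h)$ with the enriching operator $E_h:V_h\to\overline V_h$, apply the conforming inequality to the $H^1(\Omega_h)$ part, and control the correction $v_h-E_hv_h$ in $L^2(\Omega_h)$, in the broken $H^1$-norm, and on $\Gamma_h$ by the jump seminorm $\bigl(\sum_{e\in\mathring{\mathcal E}_h}h_e^{-1}\|\jump{v_h}\|_{L^2(e)}^2\bigr)^{1/2}$. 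This is precisely the mechanism of the paper's proof, which splits $\|v+v_h\|_{L^2(\Omega_h\setminus\Omega)}\le\|v+E_hv_h\|_{L^2(\Omega_h\setminus\Omega)}+\|v_h-E_hv_h\|_{L^2(\Omega_h\setminus\Omega)}$, applies \eref{eq: corollary of boundary-skin estimate 3} to the conforming part, and bounds the correction by the jumps. So your closing remark that ``no enriching operator is needed'' marks exactly where the gap sits: the broken multiplicative trace inequality you lean on is not cheaper than the lemma itself, and is false in the generality you claim. A secondary, repairable point: the sliver $(\Omega_h\setminus\Omega)\cap\pi(e,\delta_e)$ is not exactly contained in $T_e$ (near the vertices of $e$ it may protrude into neighboring elements, where a broken $v$ has different values), so applying the estimate of \cite{KaKe18} with the trace taken from $T_e$ also requires justification for nonconforming $v$.
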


\subsection{Interpolation estimates for $u\cdot n = g$}
Although the approximability of $n_h$ to $n$ is only $O(h)$ on $\Gamma_h$, at the midpoints of edges it is improved to $O(h^2)$ for $N=2$ as result of super-convergence.
This was a key observations in \cite{KOZ16} to deal with errors caused by discretization of $u\cdot n = g$; this idea, however, demanded the assumption of the $W^{2,\infty}$-regularity for velocity $u$.
Here we present a different approach which only requires $u \in H^2(\Omega)^N$, taking advantage of the divergence-free condition.
\begin{lem} \label{lem: error estimate for flux}
	Let $u \in H^2(\Omega)^N$ satisfy $\mathrm{div}\, u = 0$.
	Then for $e\in\mathcal E_h^\partial$ we have
	\begin{equation*}
		\left| \int_e u\cdot n_h\, ds - \int_{\pi(e)} u\cdot n\, ds \right| \le
		\begin{cases}
			Ch_e^{9/2} \|\nabla^2\tilde u\|_{L^2(\pi(e, \delta_e))} & \quad\text{if}\quad N=2, \\
			Ch_e^3 \|\tilde u\|_{H^2(\tilde\Omega)} & \quad\text{if}\quad N=3.
		\end{cases}
	\end{equation*}
\end{lem}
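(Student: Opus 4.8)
The plan is to convert the one-dimensional flux mismatch into a bulk integral of $\mathrm{div}\,\tilde u$ over the thin sliver trapped between the flat face $e$ and the curved patch $\pi(e)$, and then to exploit that this integrand vanishes identically in $\Omega$ and has zero trace on $\Gamma$. This is the natural way to bring the divergence-free hypothesis into play, as advertised in the paragraph preceding the lemma.

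First I would introduce the sliver $D_e$ swept by the normal segments joining $\pi(e)\subset\Gamma$ to $e\subset\Gamma_h$; by hypothesis (H5) the vertices of $e$ lie on $\Gamma$ and are fixed by $\pi$, so for $N=2$ the boundary of $D_e$ is exactly $e\cup\pi(e)$ with no lateral pieces, while for $N=3$ there appear in addition the lateral strips along the three chords bounding the triangle $e$. Applying the divergence theorem to the extension $\tilde u$, and noting that the outward fluxes across $e$ and across $\pi(e)$ carry the constant normal $n_h$ and the exact normal $n$ respectively, one obtains the exact identity
\[
	\int_e \tilde u\cdot n_h\,ds - \int_{\pi(e)} u\cdot n\,ds = \pm\int_{D_e} \mathrm{div}\,\tilde u\,dx \ \big(+\ \text{lateral terms if } N=3\big).
\]
Since $\mathrm{div}\,u=0$ in $\Omega$, the integrand is supported only on $D_e\setminus\Omega$, which lies inside the tube $\pi(e,\delta_e)$ with $\delta_e = h_e^2$, so the whole difference is already of the size of a boundary-skin quantity.

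Next I would quantify the smallness of $\mathrm{div}\,\tilde u$ on this tube. Because $\mathrm{div}\,\tilde u\in H^1(\tilde\Omega)$ vanishes a.e.\ in $\Omega$, its trace on $\Gamma$ is zero; writing $\mathrm{div}\,\tilde u(\bar x + t n) = \int_0^t \partial_s(\mathrm{div}\,\tilde u)\,ds$ along normals and invoking the boundary-skin estimate \eref{eq: boundary-skin estimate 3} (or its $\Omega_h\setminus\Omega$ variant quoted just after it), one controls $\|\mathrm{div}\,\tilde u\|_{L^2(D_e\setminus\Omega)}$ by $C\delta_e\,\|\nabla^2\tilde u\|_{L^2(\pi(e,\delta_e))}$. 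Combined with $|D_e\setminus\Omega|\le C h_e^{N-1}\delta_e$ and a Cauchy--Schwarz step in the normal variable, this produces a local bound with the correct structure, in which every factor $\delta_e = h_e^2$ is tracked explicitly; for $N=3$ the additional lateral flux across $\partial D_e$, which cannot be reduced to a div-free contribution, is absorbed at the cost of passing to the global norm $\|\tilde u\|_{H^2(\tilde\Omega)}$, yielding the rate $C h_e^3$.

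The main obstacle is extracting the \emph{sharp} power of $h_e$ in the two-dimensional case. A direct skin bound of the type above delivers only a suboptimal order, and squeezing out the final power to reach $h_e^{9/2}$ requires using that $\mathrm{div}\,\tilde u$ vanishes to first order at $\Gamma$ together with the near-symmetry of the sliver $D_e$ about the midpoint $m_e$ (the chord osculates $\Gamma$ to second order there), so that the leading moment of $\mathrm{div}\,\tilde u$ across the layer cancels. This is delicate precisely because only $u\in H^2(\Omega)^N$ is available: no trace of $\nabla^2\tilde u$ on $\Gamma$ exists, which forbids the naive first-moment estimate and compels every bound to be carried out in the bulk norm $\|\nabla^2\tilde u\|_{L^2(\pi(e,\delta_e))}$. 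In three dimensions the parallel difficulty is the lateral boundary of $D_e$, whose contribution is what degrades the estimate to the weaker, globally-normed form $\|\tilde u\|_{H^2(\tilde\Omega)}$.
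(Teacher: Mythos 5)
Your decomposition is essentially the paper's. The paper applies the divergence theorem on $D=\pi(e,\delta_e)\cap\Omega$ and $D_h=\pi(e,\delta_e)\cap\Omega_h$ and subtracts, which produces exactly your identity: the flux mismatch equals $\int_{D_h\setminus D}\operatorname{div}\tilde u\,dx$ plus remainder-boundary terms that vanish for $N=2$ (because the vertices of $e$ lie on $\Gamma$) and, for $N=3$, are bounded by $|L_e|\,\|\tilde u\|_{L^\infty(\tilde\Omega)}\le Ch_e\delta_e\|\tilde u\|_{H^2(\tilde\Omega)}$ via Sobolev embedding; your lateral-strip treatment of the three-dimensional case is the same as the paper's. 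The bulk term is then handled, as you propose, through the vanishing trace of $\operatorname{div}\tilde u$ on $\Gamma$ and the skin estimate \eref{eq: boundary-skin estimate 3}.

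Concerning the obstacle you raise for $N=2$: your instinct is correct, and the paper does not overcome it either. The paper's chain is
\begin{equation*}
	|I_1|\le C\delta_e\|\nabla\operatorname{div}\tilde u\|_{L^1(\pi(e,\delta_e))}\le C\delta_e\,|\pi(e,\delta_e)|^{1/2}\,\|\nabla\operatorname{div}\tilde u\|_{L^2(\pi(e,\delta_e))},
\end{equation*}
followed by the assertion $|\pi(e,\delta_e)|^{1/2}\le Ch_e^{N-1}\delta_e$. That assertion is false as written: one has $|\pi(e,\delta_e)|\le Ch_e^{N-1}\delta_e$, hence $|\pi(e,\delta_e)|^{1/2}\le Ch_e^{(N-1)/2}\delta_e^{1/2}=Ch_e^{3/2}$ for $N=2$, and the argument honestly yields $C\delta_e h_e^{3/2}=Ch_e^{7/2}$ --- precisely the exponent your direct bound produces. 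No moment-cancellation or symmetry argument appears in the paper, and with only $\|\nabla^2\tilde u\|_{L^2}$ available it is doubtful such an argument exists; so you should regard $9/2$ as a defect in the lemma's statement rather than a gap in your proof. The discrepancy is also inconsequential for the paper: in the only place the lemma is used (the proof of \lref{lem: error estimate of u dot nh - g}, hence \eref{eq1: L2 error of u cdot nh - g} and \eref{eq2: L2 error of u cdot nh - g}), a flux error of order $h_e^{7/2}$ is more than enough, since after multiplication by $h_e^{1-N}=h_e^{-1}$ and squaring it contributes $Ch_e^{6}\|\nabla^2\tilde u\|_{L^2(\pi(e,\delta_e))}^2$, which is still dominated by the $Ch_e^4$ terms involving $g$; every subsequent estimate goes through with $9/2$ replaced by $7/2$.
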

\begin{proof}
	We set $D := \pi(e, \delta_e) \cap \Omega$, $D_h := \pi(e, \delta_e) \cap \Omega_h$, and introduce ``reminder boundaries'' of $D$ and $D_h$ by $R = \partial D \setminus \pi(e)$ and $R_h = \partial D_h \setminus e$.
	Then it follows from the divergence theorem that
	\begin{align*}
		\int_e \tilde u\cdot n_h\, ds - \int_{\pi(e)} u\cdot n\, ds = \int_{D_h \setminus D} \mathrm{div}\, \tilde u\, dx -  \Big( \int_{R_h} \tilde u\cdot\nu_h\, ds - \int_R \tilde u\cdot\nu\, ds \Big) =: I_1 + I_2,
	\end{align*}
	where $\nu$ and $\nu_h$ denote the outer unit normals to $R$ and $R_h$, respectively.
	
	When $N=2$, $I_2 = 0$ since $R_h = R$.
	By \eref{eq: boundary-skin estimate 3}, we have (note that $\operatorname{div} u = 0$ on $\Gamma$)
	\begin{equation*}
		|I_1| \le \|\operatorname{div}\tilde u\|_{L^1(\pi(e, \delta_e))} \le C\delta_e \|\nabla \operatorname{div}\tilde u\|_{L^1(\pi(e, \delta_e))} \le C\delta_e |\pi(e, \delta_e)|^{1/2} \|\nabla \operatorname{div}\tilde u\|_{L^2(\pi(e, \delta_e))},
	\end{equation*}
	which combined with $|\pi(e, \delta_e)|^{1/2} \le Ch_e^{N-1}\delta_e$ implies the desired estimate.
	
	When $N=3$, denoting by $L_e = \{ \bar x + t n(\bar x) \,:\, \bar x \in \partial\pi(e), \; |t| \le \delta_e \}$ the lateral boundary of $\pi(e, \delta_e)$, we obtain
	\begin{equation*}
		|I_2| \le |L_e| \|\tilde u\|_{L^\infty(\tilde\Omega)} \le Ch_e\delta_e \|\tilde u\|_{H^2(\tilde\Omega)},
	\end{equation*}
	where we have used Sobolev's embedding theorem.
	Since the estimate of $I_2$ dominates that of $I_1$, the desired result follows.
\end{proof}
\begin{rem}
	If the extension satisfies $\operatorname{div}\tilde u = 0$ in $\tilde\Omega$, then the error becomes zero for $N = 2$.
\end{rem}

We apply the above lemma to estimate the error $\tilde u \cdot n_h - \tilde g$ on $\Gamma_h$.
\begin{lem} \label{lem: error estimate of u dot nh - g}
	Let $u \in H^2(\Omega)^N$ and $g \in H^{3/2}(\Gamma)$ satisfy $\mathrm{div}\, u = 0$ and $u\cdot n = g$.
	Then for $e \in \mathcal E_h^\partial$ we have
	\begin{equation*}
		\|\Pi_h^\partial (\tilde u\cdot n_h - \tilde g)\|_{L^2(e)}^2 \le
		\begin{cases}
			Ch_e^4 (\|\tilde g\|_{L^2(e)}^2 + \|\nabla \tilde g\|_{L^2(e)}^2 + \|\nabla^2 \tilde g\|_{L^2(\pi(e, \delta_e))}^2) + Ch_e^8 \|\nabla^2\tilde u\|_{L^2(\pi(e, \delta_e))}^2 &\hspace{-1mm} (N=2), \\
			Ch_e^4 (\|\tilde g\|_{L^2(e)}^2 + \|\nabla \tilde g\|_{L^2(e)}^2 + \|\nabla^2 \tilde g\|_{L^2(\pi(e, \delta_e))}^2) + Ch_e^4 \|\tilde u\|_{H^2(\tilde\Omega)}^2 &\hspace{-1mm} (N=3).
		\end{cases}
	\end{equation*}
\end{lem}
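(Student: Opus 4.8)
The plan is to exploit that $\Pi_h^\partial$ replaces a function by its mean value over $e$, so that $\Pi_h^\partial(\tilde u\cdot n_h - \tilde g)$ is the constant $\frac1{|e|}\int_e(\tilde u\cdot n_h - \tilde g)\,ds$ on $e$. Consequently
\begin{equation*}
	\|\Pi_h^\partial(\tilde u\cdot n_h - \tilde g)\|_{L^2(e)}^2 = \frac1{|e|}\Big| \int_e(\tilde u\cdot n_h - \tilde g)\,ds \Big|^2,
\end{equation*}
and since $|e| \sim h_e^{N-1}$ the whole task reduces to bounding the single scalar $A := \int_e(\tilde u\cdot n_h - \tilde g)\,ds$; the asserted bounds correspond to $|A| \le C h_e^{(N+3)/2}(\|\tilde g\|_{L^2(e)} + \|\nabla\tilde g\|_{L^2(e)} + \|\nabla^2\tilde g\|_{L^2(\pi(e,\delta_e))})$ plus the velocity remainder. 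Since $u\cdot n = g$ on $\Gamma$ and $\tilde g|_\Gamma = g$, we have $\int_{\pi(e)} u\cdot n\,ds = \int_{\pi(e)}\tilde g\,ds$, which lets me insert the exact flux and the pullback $\tilde g\circ\pi$ to write $A = A_1 + A_2 + A_3$ with
\begin{align*}
	A_1 &= \int_e\tilde u\cdot n_h\,ds - \int_{\pi(e)} u\cdot n\,ds, \\
	A_2 &= \int_{\pi(e)}\tilde g\,ds - \int_e\tilde g\circ\pi\,ds, \\
	A_3 &= \int_e(\tilde g\circ\pi - \tilde g)\,ds.
\end{align*}

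The first two terms are straightforward. The term $A_1$ is precisely the flux error controlled by \lref{lem: error estimate for flux}, giving $C h_e^{9/2}\|\nabla^2\tilde u\|_{L^2(\pi(e,\delta_e))}$ for $N=2$ and $C h_e^3\|\tilde u\|_{H^2(\tilde\Omega)}$ for $N=3$; after dividing $|A_1|^2$ by $|e|$ these reproduce exactly the two velocity terms in the claim. The term $A_2$ is a pure boundary-skin error, so \eref{eq: boundary-skin estimate 1} with $f=\tilde g$ yields $|A_2| \le C\delta_e\|\tilde g\|_{L^1(e)} \le C\delta_e|e|^{1/2}\|\tilde g\|_{L^2(e)}$; with $\delta_e = Ch_e^2$ this is of order $h_e^{(N+3)/2}\|\tilde g\|_{L^2(e)}$ and contributes the $h_e^4\|\tilde g\|_{L^2(e)}^2$ term.

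The delicate term is $A_3$, and this is where I expect the main obstacle. A direct use of Cauchy--Schwarz and \eref{eq: boundary-skin estimate 2} gives $|A_3| \le |e|^{1/2}\|\tilde g\circ\pi - \tilde g\|_{L^2(e)} \le C|e|^{1/2}h_e\|\nabla\tilde g\|_{L^2(\pi(e,\delta_e))}$, which is one power of $h_e$ short of the required order and still carries a volume norm instead of the boundary norm $\|\nabla\tilde g\|_{L^2(e)}$ appearing in the statement. To recover the missing power I would feed $\nabla\tilde g$ into \eref{eq: boundary-skin estimate 3}, namely $\|\nabla\tilde g\|_{L^2(\pi(e,\delta_e))} \le C\delta_e^{1/2}\|\nabla\tilde g\|_{L^2(\pi(e))} + C\delta_e\|\nabla^2\tilde g\|_{L^2(\pi(e,\delta_e))}$, trading one factor $\delta_e^{1/2} = Ch_e$ for a boundary trace of $\nabla\tilde g$ on $\pi(e)$ together with a higher-order Hessian remainder. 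Finally I would transfer $\|\nabla\tilde g\|_{L^2(\pi(e))}$ back to $\|\nabla\tilde g\|_{L^2(e)}$ via \eref{eq: boundary-skin estimate 1} applied to $f = |\nabla\tilde g|^2$ and \eref{eq: boundary-skin estimate 2} for the pullback $\nabla\tilde g\circ\pi$, the latter being absorbed into the Hessian term. Collecting these gives $|A_3| \le C h_e^{(N+3)/2}\|\nabla\tilde g\|_{L^2(e)} + C h_e^{(N+5)/2}\|\nabla^2\tilde g\|_{L^2(\pi(e,\delta_e))}$, so that $|A_3|^2/|e|$ yields $C h_e^4\|\nabla\tilde g\|_{L^2(e)}^2 + C h_e^6\|\nabla^2\tilde g\|_{L^2(\pi(e,\delta_e))}^2$, with the $h_e^6$ factor absorbed into $h_e^4$ for small $h$. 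Summing the three contributions through $|A|^2 \le 3(|A_1|^2 + |A_2|^2 + |A_3|^2)$ then gives the claimed bound in both dimensions. The crux is thus the sharpening of $A_3$: the naive skin estimate loses a power of $h_e$, and the cancellation must be extracted by the nested application of the boundary-skin estimates, with careful bookkeeping when passing between integrals over $\pi(e) \subset \Gamma$ and over $e \subset \Gamma_h$.
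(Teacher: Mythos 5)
Your proposal is correct and follows essentially the same route as the paper: the identical reduction to the mean-value integral, the same split into the flux error (handled by \lref{lem: error estimate for flux}), the Jacobian error (handled by \eref{eq: boundary-skin estimate 1}), and the pullback error, with the latter sharpened by the same nested application of \eref{eq: boundary-skin estimate 2} and \eref{eq: boundary-skin estimate 3} to trade the volume norm of $\nabla\tilde g$ for its trace on $e$ plus a Hessian remainder. The only cosmetic difference is that you work in $L^2$ throughout (applying Cauchy--Schwarz first), whereas the paper carries out the same chain in $L^1$ and converts to $L^2$ by H\"older at the end.
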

\begin{proof}
	Observe that
	\begin{align*}
		\|\Pi_h^\partial (\tilde u\cdot n_h - \tilde g)\|_{L^2(e)}^2 &= |e|^{-1} \left| \int_e \tilde u\cdot n_h\, ds - \int_e \tilde g\, ds \right|^2 \\
			&\le Ch_e^{1-N} \bigg( \Big| \int_e \tilde u\cdot n_h\, ds - \int_{\pi(e)} u\cdot n\, ds \Big|^2 + \Big| \int_{\pi(e)} g\, ds - \int_e \tilde g\, ds \Big|^2 \bigg).
	\end{align*}
	It follows from \eref{eq: boundary-skin estimate 1} and \eref{eq: boundary-skin estimate 2} that
	\begin{align*}
		\Big| \int_{\pi(e)} g\, ds - \int_e \tilde g\, ds \Big| &\le \Big| \int_{\pi(e)} g\, ds - \int_e g\circ\pi \, ds \Big| + \int_{e} |g\circ\pi - \tilde g| \, ds \le C\delta_e \|\tilde g\|_{L^1(e)} + C\|\nabla\tilde g\|_{L^1(\pi(e, \delta_e))} \\
			&\le C\delta_e (\|\tilde g\|_{L^1(e)} + \|\nabla\tilde g\|_{L^1(e)} + \|\nabla^2\tilde g\|_{L^1(\pi(e,\delta_e))}) \\
			&\le Ch_e^{(N+3)/2} (\|\tilde g\|_{L^2(e)} + \|\nabla \tilde g\|_{L^2(e)} + \|\nabla^2 \tilde g\|_{L^2(\pi(e, \delta_e))}).
	\end{align*}
	Combining these with \lref{lem: error estimate for flux}, we conclude the desired estimates.
\end{proof}
\begin{rem}
	If $(u, p)$ is a solution of \eref{eq: Stokes slip BC}, then adding up the result of \lref{lem: error estimate of u dot nh - g} for $e \in \mathcal E_h^\partial$ yields
	\begin{align}
		\|\Pi_h^\partial (\tilde u\cdot n_h - \tilde g)\|_{L^2(\Gamma_h)} &\le Ch^{2\alpha} \|u\|_{H^2(\Omega)}, \label{eq1: L2 error of u cdot nh - g} \\
		\Big( \sum_{e \in \mathcal E_h^\partial} h_e^{-1} \|\Pi_h^\partial (\tilde u\cdot n_h - \tilde g)\|_{L^2(e)}^2 \Big)^{1/2} &\le Ch^{2\alpha - 1/2} \|u\|_{H^2(\Omega)}, \label{eq2: L2 error of u cdot nh - g}
	\end{align}
	where $\alpha$ is the same as in \tref{thm: H1 error estimate} and we have used $\sum_{e \in \mathcal E_h^\partial} h_e^2 \le C$ in case $N=3$.
\end{rem}

\section{Well-posedness of the Approximate Problem} \label{sec4}
We adopt the following discrete version of Korn's inequality proved in \cite{Bre03} (see also \cite[p.\ 993]{BuHa05}):
\begin{equation} \label{eq: discrete Korn's inequality}
	C\|v_h\|_{V_h}^2 \le a_h(v_h, v_h) + j_h(v_h, v_h) \quad \forall v_h \in V_h.
\end{equation}
In addition, it is known that an inf-sup condition is valid for $b_h$ (see \cite[Section 8.4.4]{BBF13}):
\begin{equation} \label{eq: inf-sup for bh}
	C \|q_h\|_{Q_h} \le \sup_{v_h \in \mathring V_h} \frac{b_h(q_h, v_h)}{\|v_h\|_{V_h}} \quad \forall q_h \in \mathring Q_h.
\end{equation}
\begin{rem}
	The positive constants $C$ appearing above depend on the $C^{0,1}$-regularity of the domain $\Omega_h$, which is independent of $h$ if it is sufficiently small.
\end{rem}
\begin{proof}[Proof of \tref{thm: well-posedness}]
	Because the problem is linear and finite-dimensional, it suffices to show the \emph{a priori} estimate \eref{eq: stability of uh and ph} assuming the existence of a solution $(u_h, p_h)$ of \eref{eq: discrete 2-field formulation}.
	Since $v_h$ vanishes at $m_e$'s on $\Gamma_h$ and $b_h(1, v_h) = 0$ for $v_h \in \mathring V_h$, it follows from the inf-sup condition \eref{eq: inf-sup for bh} that
	\begin{align*}
		C \|\mathring p_h\|_{Q_h} &\le \sup_{v_h \in \mathring V_h} \frac{b_h(\mathring p_h, v_h)}{\|v_h\|_{V_h}}
			= \sup_{v_h \in \mathring V_h} \frac{ (\tilde f, v_h)_{\Omega_h} + (\tilde\tau, v_h)_{\Gamma_h} - a_h(u_h, v_h) - j_h(u_h, v_h) }{\|v_h\|_{V_h}} \\
			&\le C (\|\tilde f\|_{L^2(\Omega_h)} + \|\tilde\tau\|_{L^2(\Gamma_h)} + \|u_h\|_{V_h}).
	\end{align*}
	
	Next, by Lemmas \ref{lem: discrete lifting theorem} and \ref{lem: compatibility with usual H1/2 norm when nh is involved} there exists $w_h \in V_h$ such that $w_h \cdot n_h = -1$ at $m_e$'s on $\Gamma_h$ and $\|w_h\|_{V_h} \le C$.
	Taking $v_h = w_h$ in \eref{eq: discrete 2-field formulation}$_1$ and noting that $b_h(1, w_h) = -(1, w_h\cdot n_h)_{\Gamma_h} = |\Gamma_h|$, we obtain
	\begin{equation*}
		k_h |\Gamma_h| = (\tilde f, w_h)_{\Omega_h} + (\tilde\tau, w_h)_{\Gamma_h} - a_h(u_h, w_h) - b_h(\mathring p_h, w_h) - \frac1\epsilon c_h(u_h\cdot n_h - \tilde g, 1) - j_h(u_h, w_h).
	\end{equation*}
	This, together with $c_h(u_h\cdot n_h, 1) = -b_h(1, u_h) = 0$, gives an estimate for $k_h$:
	\begin{equation*}
		|k_h| \le C \big( \|\tilde f\|_{L^2(\Omega_h)} + \|\tilde\tau\|_{L^2(\Gamma_h)} + \|u_h\|_{V_h} + \|\mathring p_h\|_{Q_h} + \frac1\epsilon |c_h(\tilde g, 1)| \big),
	\end{equation*}
	where, by the definition of $\Pi_h^\partial$, by the compatibility condition \eref{eq: compatibility of g} and by \eref{eq: boundary-skin estimate 1}--\eref{eq: boundary-skin estimate 2}, we have
	\begin{equation*}
		|c_h(\tilde g, 1)| = |(\Pi_h^\partial \tilde g, 1)_{\Gamma_h}| = \left| \int_{\Gamma_h} \tilde g\, ds - \int_\Gamma g\, ds \right| \le Ch^2 \|\tilde g\|_{H^2(\tilde\Omega)}.
	\end{equation*}
	In conclusion, the pressure can be estimated as
	\begin{equation} \label{eq: stability ph}
		\|p_h\|_{Q_h} \le C (\|\tilde f\|_{L^2(\Omega_h)} + \|\tilde\tau\|_{L^2(\Gamma_h)} + \epsilon^{-1}h^2 \|\tilde g\|_{H^2(\tilde\Omega)} + \|u_h\|_{V_h}).
	\end{equation}
	
	Finally, making use of the discrete Korn's inequality \eref{eq: discrete Korn's inequality} and taking $v_h = u_h$ in \eref{eq: discrete 2-field formulation}$_1$ give
	\begin{align}
		C\|u_h\|_{V_h}^2 &\le a_h(u_h, u_h) + j_h(u_h, u_h) + \frac1\epsilon \|u_h\cdot n_h - \Pi_h^\partial \tilde g\|_{L^2(\Gamma_h)}^2 \notag \\
			&= (\tilde f, u_h)_{\Omega_h} + (\tilde\tau, u_h)_{\Gamma_h} - \frac1\epsilon c_h(u_h\cdot n_h - \tilde g, \tilde g). \label{eq: stability uh}
	\end{align}
	To address the third term on the last line, we find from Lemmas \ref{lem: discrete lifting theorem} and \ref{lem: compatibility with usual H1/2 norm when nh is involved} some $z_h \in V_h$ such that $z_h\cdot n_h = \Pi_h^\partial \tilde g$ at $m_e$'s on $\Gamma_h$ and $\|z_h\|_{V_h} \le C \|\tilde g\|_{H^{1/2}(\Gamma_h)}$.
	Letting now $v_h = z_h$ in \eref{eq: discrete 2-field formulation}$_1$ one gets
	\begin{align}
		\big| \frac1\epsilon c_h(u_h\cdot n_h - \Pi_h^\partial\tilde g, \Pi_h^\partial\tilde g) \big| &= \big| (\tilde f, z_h)_{\Omega_h} + (\tilde\tau, z_h)_{\Gamma_h} - a_h(u_h, z_h) - b_h(p_h, z_h) \big| \notag \\
			&\le C ( \|\tilde f\|_{L^2(\Omega_h)} + \|\tilde\tau\|_{L^2(\Gamma_h)} + \|u_h\|_{V_h} + \|p_h\|_{Q_h} ) \|\tilde g\|_{H^{1/2}(\Gamma_h)}. \label{eq: stability of uh, reminder term}
	\end{align}
	Combining the estimates \eref{eq: stability ph}--\eref{eq: stability of uh, reminder term}, performing an absorbing argument, and using the stability of extensions, we conclude \eref{eq: stability of uh and ph}.
\end{proof}

\section{$H^1$-error estimate} \label{sec5}
Let us introduce a discrete Lagrange multiplier by $\lambda_h := \frac1\epsilon \Pi_h^\partial (u_h \cdot n_h - \tilde g) \in \Lambda_h$.
An easy but important fact is that if $(u_h, p_h)$ solves \eref{eq: discrete 2-field formulation}, then $(u_h, p_h, \lambda_h)$ satisfies the following three-field formulation:
\begin{equation} \label{eq: discrete 3-field formulation}
	\left\{
	\begin{aligned}
		a_h(u_h, v_h) + b_h(p_h, v_h) + c_h(\lambda_h, v_h\cdot n_h) + j_h(u_h, v_h) &= (\tilde f, v_h)_{\Omega_h} + (\tilde \tau, v_h)_{\Gamma_h} && \forall v_h \in V_h, \\
		b_h(q_h, u_h) &= 0 && \forall q_h \in Q_h, \\
		c_h(\mu_h, u_h\cdot n_h - \tilde g) &= \epsilon c_h(\mu_h, \lambda_h) && \forall \mu_h \in \Lambda_h,
	\end{aligned}
	\right.
\end{equation}
which will be compared with \eref{eq: 3-field formulation} in the subsequent arguments.
\subsection{Consistency error estimate}
Since $\Omega\neq\Omega_h$ and a nonconforming element is employed, the consistency (i.e.\ the Galerkin orthogonality relation) does not hold exactly.
However, it is still valid in an asymptotic sense with respect to $h\to0$.
To see this, we introduce a functional $\mathrm{Res}(v)$ by
\begin{align}
	\mathrm{Res}(v) &:= (\tilde u - \nu\Delta\tilde u - \nu\nabla\operatorname{div}\tilde u + \nabla\tilde p - \tilde f, v)_{\Omega_h \setminus \Omega} + \sum_{e \in \mathring{\mathcal E}_h} (\sigma(\tilde u, \tilde p)n_e, \jump{v})_e \notag \\
		&\hspace{1cm} + (\sigma(\tilde u, \tilde p)n_h, v)_{\Gamma_h} - (\tilde\tau - \tilde\lambda n_h, v)_{\Gamma_h} + (\tilde\lambda, (\Pi_h^\partial v - v)\cdot n_h)_{\Gamma_h}, \label{eq: definition of Res(v)}
\end{align}
which is well-defined for $v \in H^1(\mathcal T_h)^N$.
The next lemma shows that $\mathrm{Res}(v)$ describes the residual of the consistency and that it is of $O(h)$.

\begin{lem} \label{lem: asymptotic Galerkin orthogonality}
	Let $(u, p, \lambda) \in H^2(\Omega)^N \times H^1(\Omega) \times H^{1/2}(\Gamma)$ be the solution of \eref{eq: 3-field formulation} and $(u_h, p_h, \lambda_h) \in V_h \times Q_h \times \Lambda_h$ be that of \eref{eq: discrete 2-field formulation}.
	
	(i) For $v_h \in V_h$ we have
	\begin{equation} \label{eq: Galerkin orthogonality}
		a_h(\tilde u - u_h, v_h) + b_h(\tilde p - p_h, v_h) + c_h(\tilde\lambda - \lambda_h, v_h \cdot n_h) - j_h(u_h, v_h) = \mathrm{Res}(v_h).
	\end{equation}
	
	(ii) For $v \in V_h + H^1(\Omega_h)^N$ we obtain
	\begin{equation*}
		|\mathrm{Res}(v)| \le Ch(\|u\|_{H^2(\Omega)} + \|p\|_{H^1(\Omega)}) \vertiii{v}_{V_h}.
	\end{equation*}
\end{lem}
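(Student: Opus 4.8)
The plan for part (i) is to obtain the identity \eref{eq: Galerkin orthogonality} as an \emph{exact} (unconditional) consequence of element-wise integration by parts, valid for all $v_h \in V_h$; the boundary conditions on $\Gamma$ play no role at this stage. After subtracting the discrete equation \eref{eq: discrete 3-field formulation}$_1$ (which replaces $a_h(u_h,\cdot)+b_h(p_h,\cdot)+c_h(\lambda_h,\cdot)+j_h(u_h,\cdot)$ by the right-hand side), the claim reduces to
\begin{equation*}
	a_h(\tilde u, v_h) + b_h(\tilde p, v_h) + c_h(\tilde\lambda, v_h\cdot n_h) - (\tilde f, v_h)_{\Omega_h} - (\tilde\tau, v_h)_{\Gamma_h} = \mathrm{Res}(v_h).
\end{equation*}
First I would integrate by parts on each $T\in\mathcal T_h$, using $\sigma(\tilde u,\tilde p):\nabla v_h = -\tilde p\,\operatorname{div}v_h + \frac\nu2\mathbb E(\tilde u):\mathbb E(v_h)$ together with $\operatorname{div}\sigma(\tilde u,\tilde p) = -\nabla\tilde p + \nu\Delta\tilde u + \nu\nabla\operatorname{div}\tilde u$, so that $a_h(\tilde u,v_h)+b_h(\tilde p,v_h)$ becomes $\int_{\Omega_h}(\tilde u + \nabla\tilde p - \nu\Delta\tilde u - \nu\nabla\operatorname{div}\tilde u)\cdot v_h\,dx$ plus the sum over $T$ of the element-boundary integrals of $(\sigma(\tilde u,\tilde p)n_{\partial T})\cdot v_h$. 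On $\Omega_h\cap\Omega$ the extensions equal $(u,p,f)$, so the interior equations $\tilde u-\nu\Delta\tilde u+\nabla\tilde p=\tilde f$ and $\operatorname{div}\tilde u=0$ force the volume integrand to equal $\tilde f$ there; after subtracting $(\tilde f,v_h)_{\Omega_h}$ only the integral over $\Omega_h\setminus\Omega$ survives, which is the first term of $\mathrm{Res}$. The element-boundary integrals split via $\mathcal E_h = \mathring{\mathcal E}_h\cup\mathcal E_h^\partial$: since $\sigma(\tilde u,\tilde p)$ is single-valued across each interior edge, the two adjacent contributions assemble (with the orientation convention for $n_e$) into $\sum_{e\in\mathring{\mathcal E}_h}(\sigma(\tilde u,\tilde p)n_e,\jump{v_h})_e$, while the boundary edges yield $(\sigma(\tilde u,\tilde p)n_h,v_h)_{\Gamma_h}$. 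For the penalty term I would use that $\Pi_h^\partial$ is the $L^2(\Gamma_h)$-orthogonal projection onto $\Lambda_h$ and that $n_h$ is piecewise constant, giving $c_h(\tilde\lambda,v_h\cdot n_h) = (\tilde\lambda,(\Pi_h^\partial v_h)\cdot n_h)_{\Gamma_h} = (\tilde\lambda n_h,v_h)_{\Gamma_h} + (\tilde\lambda,(\Pi_h^\partial v_h - v_h)\cdot n_h)_{\Gamma_h}$. Collecting all the $\Gamma_h$-contributions then reproduces the remaining terms of $\mathrm{Res}$ exactly.

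For part (ii) I would bound each of the five terms of $\mathrm{Res}(v)$ by $Ch(\|u\|_{H^2(\Omega)}+\|p\|_{H^1(\Omega)})\vertiii{v}_{V_h}$, repeatedly using that $\|f\|_{L^2(\Omega)}$, $\|\tau\|_{H^{1/2}(\Gamma)}$ and $\|\lambda\|_{H^{1/2}(\Gamma)}$—and hence the norms of their extensions—are all controlled by $\|u\|_{H^2(\Omega)}+\|p\|_{H^1(\Omega)}$ via the equation and trace theorems. The first term is immediate: Cauchy--Schwarz together with \lref{lem: a boundary-skin estimate for nonconforming approximation} gives a bound by $\|\,\cdot\,\|_{L^2(\tilde\Omega)}\,\|v\|_{L^2(\Omega_h\setminus\Omega)}\le Ch(\|u\|_{H^2}+\|p\|_{H^1})\vertiii{v}_{V_h}$. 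The interior-edge term is the standard nonconforming consistency error: since $\jump{v}$ has zero mean on each $e\in\mathring{\mathcal E}_h$ for $v\in V_h + H^1(\Omega_h)^N$, I may subtract the edge-average of $\sigma(\tilde u,\tilde p)n_e$ and invoke the trace-approximation bound $\|\sigma(\tilde u,\tilde p)n_e - \overline{\sigma(\tilde u,\tilde p)n_e}\|_{L^2(e)}\le Ch_e^{1/2}\|\nabla\sigma(\tilde u,\tilde p)\|_{L^2(\omega_e)}$, where $\omega_e = T_e^+\cup T_e^-$; pairing the factor $h_e^{1/2}$ with $h_e^{-1/2}\|\jump{v}\|_{L^2(e)}$ and applying Cauchy--Schwarz over edges, the definition of $\vertiii{\,\cdot\,}_{V_h}$ closes the estimate. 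The fifth (reduced-integration) term is entirely analogous: $(I-\Pi_h^\partial)(v\cdot n_h)$ has zero mean on each $e\in\mathcal E_h^\partial$, so subtracting $\Pi_h^\partial\tilde\lambda$ and applying the trace-approximation estimate to both factors produces a factor $h_e$ per edge.

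The main obstacle is the boundary stress pair, $(\sigma(\tilde u,\tilde p)n_h,v)_{\Gamma_h} - (\tilde\tau - \tilde\lambda n_h, v)_{\Gamma_h}$, which is exactly where domain perturbation enters. Setting $\phi := (\sigma(\tilde u,\tilde p) + \tilde\lambda\mathbb I)n_h - \tilde\tau$, the boundary conditions \eref{eq: Stokes slip BC}$_4$ and $\lambda = -\sigma(u,p)n\cdot n$ give $(\sigma(u,p)+\lambda\mathbb I)n - \tau = 0$ on $\Gamma$, so $\phi$ would vanish but for the replacement of $n$, $\Gamma$ by $n_h$, $\Gamma_h$. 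The strategy is to split $\phi = (\sigma(\tilde u,\tilde p)+\tilde\lambda\mathbb I)(n_h - n\circ\pi) + \psi$ with $\psi := (\sigma(\tilde u,\tilde p)+\tilde\lambda\mathbb I)(n\circ\pi) - \tilde\tau$. The first summand is $O(h)$ in $L^2(\Gamma_h)$, because $|n_h - n\circ\pi|\le Ch$ on $\Gamma_h$ and the trace of the $H^1$-tensor $\sigma(\tilde u,\tilde p)+\tilde\lambda\mathbb I$ on $\Gamma_h$ is controlled by $\|u\|_{H^2}+\|p\|_{H^1}$. For $\psi$ the key observation is that $\psi\circ\pi\equiv0$ on $\Gamma_h$ (since $\pi$ maps into $\Gamma$, where $\psi=0$), so \eref{eq: boundary-skin estimate 2} yields $\|\psi\|_{L^2(e)} = \|\psi - \psi\circ\pi\|_{L^2(e)}\le C\delta_e^{1/2}\|\nabla\psi\|_{L^2(\pi(e,\delta_e))}$; summing over $e$ with $\delta_e = h_e^2$ gives $\|\psi\|_{L^2(\Gamma_h)}\le Ch\|\nabla\psi\|_{L^2(\tilde\Omega)}\le Ch(\|u\|_{H^2}+\|p\|_{H^1})$. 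Hence $\|\phi\|_{L^2(\Gamma_h)}\le Ch(\|u\|_{H^2}+\|p\|_{H^1})$, and pairing with the trace bound $\|v\|_{L^2(\Gamma_h)}\le C\vertiii{v}_{V_h}$ finishes this pair. Summing the five contributions proves (ii).
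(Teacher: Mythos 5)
Your proposal is correct and follows essentially the same route as the paper's proof: element-wise integration by parts combined with the projection property of $\Pi_h^\partial$ for (i), and for (ii) the same term-by-term estimates (boundary-skin bound on $\Omega_h\setminus\Omega$, zero-mean jumps paired with a $P_0$-projection of the stress on interior edges, domain-perturbation of the boundary stress via $|n_h - n\circ\pi|\le Ch$ and comparison with compositions by $\pi$, and the double-projection orthogonality for the reduced-integration term). The only cosmetic difference is that you use the combined identity $(\sigma(u,p)+\lambda\mathbb I)n=\tau$ on $\Gamma$ in one piece, whereas the paper splits the boundary residual into tangential and normal parts $F_\tau$ and $F_n$ and invokes the two boundary conditions separately; both reduce to the same boundary-skin estimates.
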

\begin{rem}
	(i) As an easy consequence of \eref{eq: Galerkin orthogonality} we have
	\begin{align*}
		a_h(\tilde u - u_h, v_h) + b_h(\tilde p - p_h, v_h)  - j_h(u_h, v_h) = \mathrm{Res}(v_h) \qquad \forall v_h \in \mathring V_h.
	\end{align*}
	
	(ii) Noting that $b_h(k_h, v_h) + c_h(k_h, v_h\cdot n_h) = 0$, where $k_h$ is given in \tref{thm: well-posedness}, one has
	\begin{equation*}
		a_h(\tilde u - u_h, v_h) + b_h(\tilde p + k_h - p_h, v_h) + c_h(\tilde\lambda + k_h - \lambda_h, v_h \cdot n_h) - j_h(u_h, v_h) = \mathrm{Res}(v_h).
	\end{equation*}
	Since $R_h\tilde p - \tilde p$ and $\Pi_h^\partial\tilde\lambda - \tilde\lambda$ are orthogonal to the functions in $Q_h$ and to those in $\Lambda_h$ respectively, this in particular implies
	\begin{equation*}
		a_h(\tilde u - u_h, v_h) + b_h( R_h(\tilde p + k_h) - p_h, v_h) + c_h( \Pi_h^\partial (\tilde\lambda + k_h) - \lambda_h, v_h \cdot n_h) - j_h(u_h, v_h) = \mathrm{Res}(v_h).
	\end{equation*}
\end{rem}
\begin{proof}[Proof of \lref{lem: asymptotic Galerkin orthogonality}]
	(i) Integration by parts together with \eref{eq: discrete 3-field formulation}$_1$ shows that the left-hand side of \eref{eq: Galerkin orthogonality} equals
	\begin{align*}
		& \sum_{T \in \mathcal T_h} \Big( (\tilde u - \nu\Delta\tilde u - \nu\nabla\operatorname{div}\tilde u + \nabla\tilde p, v_h)_{T} + (\sigma(\tilde u, \tilde p)n_{\partial T}, v_h)_{\partial T} \Big) + c_h(\tilde\lambda, v_h\cdot n_h) - (\tilde f, v_h)_{\Omega_h} - (\tilde\tau, v_h)_{\Gamma_h} \\
		=\; &(\tilde u - \nu\Delta\tilde u - \nu\nabla\operatorname{div}\tilde u + \nabla\tilde p - \tilde f, v_h)_{\Omega_h \setminus \Omega} + \sum_{e \in \mathring{\mathcal E}_h} (\sigma(\tilde u, \tilde p)n_e, \jump{v_h})_e + (\sigma(\tilde u, \tilde p)n_h, v_h)_{\Gamma_h} \\
		&\hspace{1cm} - (\tilde\tau, v_h)_{\Gamma_h} + (\tilde\lambda, \Pi_h^\partial v_h\cdot n_h)_{\Gamma_h}.
	\end{align*}
	Since $-(-\tilde\lambda n_h, v_h)_{\Gamma_h} + (\tilde\lambda, -v_h\cdot n_h)_{\Gamma_h} = 0$, this implies \eref{eq: Galerkin orthogonality}.
	
	(ii) For simplicity we abbreviate $C(\|u\|_{H^2(\Omega)} + \|p\|_{H^1(\Omega)})$ as $C(u, p)$.
	The first term of $\mathrm{Res}(v)$ is bounded by
	\begin{equation*}
		|(\tilde u - \nu\Delta\tilde u - \nu\nabla\operatorname{div}\tilde u + \nabla\tilde p - \tilde f, v)_{\Omega_h\setminus\Omega}| \le C(u, p) \|v\|_{L^2(\Omega_h\setminus\Omega)} \le C(u, p)h \vertiii{v}_{V_h},
	\end{equation*}
	where we have used \eref{eq: boundary-skin estimate 3}.  For the second term, since $\int_e\jump{v}\,ds = 0$ for $e \in \mathring{\mathcal E}_h$, we have
	\begin{align*}
		\bigg| \sum_{e\in\mathring{\mathcal E}_h} \big( \sigma(\tilde u, \tilde p)n_e, \jump{v} \big)_e \bigg| &= \bigg| \sum_{e\in\mathring{\mathcal E}_h} \big( (\sigma(\tilde u, \tilde p) - \Pi^e \sigma(\tilde u, \tilde p) )n_e, \jump{v} \big)_e \bigg| \\
			&\le \bigg( \sum_{e\in\mathring{\mathcal E}_h} h_e \|\sigma(\tilde u, \tilde p) - \Pi^e \sigma(\tilde u, \tilde p)\|_{L^2(e)}^2 \bigg)^{1/2} \bigg( \sum_{e\in\mathring{\mathcal E}_h} h_e^{-1} \|\jump{v}\|_{L^2(e)}^2 \bigg)^{1/2} \\
			&\le C\bigg( \sum_{e\in\mathring{\mathcal E}_h} h_e^2 \|\sigma(\tilde u, \tilde p)\|_{H^{1/2}(e)}^2 \bigg)^{1/2} \bigg( \sum_{e\in\mathring{\mathcal E}_h} h_e^{-1} \|\jump{v}\|_{L^2(e)}^2 \bigg)^{1/2} \\
			&\le C(u, p)h \vertiii{v}_{V_h},
	\end{align*}
	where $\Pi^e$ denotes the orthogonal projector from $L^2(e)$ onto $P_0(e)$.
	To address the third and forth terms we observe that
	\begin{equation*}
		\sigma(\tilde u, \tilde p)n_h - \tilde \tau + \tilde\lambda n_h = \big( \sigma(\tilde u, \tilde p)(I - n_h\otimes n_h)n_h - \tilde\tau \big) + \big( \sigma(\tilde u, \tilde p)n_h\cdot n_h + \tilde\lambda \big) n_h =: F_\tau + F_n.
	\end{equation*}
	Recalling that $\sigma(u, p)(I - n\otimes n)n = \tau$ on $\Gamma$, one has
	\begin{equation*}
		F_\tau = \sigma(\tilde u, \tilde p)(I - n_h\otimes n_h)n_h - \big( \sigma(u, p)(I - n\otimes n)n \big)\circ\pi + \tau\circ\pi - \tilde\tau,
	\end{equation*}
	which, combined with the estimates
	\begin{equation*}
		\|\sigma(\tilde u, \tilde p) - \sigma(u, p)\circ\pi\|_{L^2(\Gamma_h)} \le C(u, p)h, \quad \|\tilde\tau - \tau\circ\pi\|_{L^2(\Gamma_h)} \le C(u, p)h, \quad \|n_h - n\circ\pi\|_{L^\infty(\Gamma_h)} \le Ch,
	\end{equation*}
	yields $\|F_\tau\|_{L^2(\Gamma_h)} \le C(u, p)h$.  Similarly we have $\|F_n\|_{L^2(\Gamma_h)} \le C(u, p)h$.  Therefore,
	\begin{equation*}
		|(F_\tau + F_n, v)_{\Gamma_h}| \le C(u, p)h \|v\|_{L^2(\Gamma_h)} \le C(u, p)h \|v\|_{V_h}.
	\end{equation*}
	Finally, the last term of $\mathrm{Res}(v)$ is estimated by
	\begin{align*}
		\big| \big( \tilde\lambda, (\Pi_h^\partial v - v)\cdot n_h \big)_{\Gamma_h} \big| &= \Big| \sum_{e\in\mathcal E_h^\partial} \big( \tilde\lambda - \Pi_h^\partial\tilde\lambda, (\Pi_h^\partial v - v)\cdot n_h \big) \Big| \\
			&\le \Big( \sum_{e\in\mathcal E_h^\partial} \|\tilde\lambda - \Pi_h^\partial\tilde\lambda\|_{L^2(e)}^2 \Big)^{1/2} \Big( \sum_{e\in\mathcal E_h^\partial} \|\Pi_h^\partial v - v\|_{L^2(e)}^2 \Big)^{1/2} \\
			&\le C \Big( \sum_{e\in\mathcal E_h^\partial} h_e\|\tilde\lambda\|_{H^{1/2}(e)}^2 \Big)^{1/2} \Big( \sum_{e\in\mathcal E_h^\partial} h_e\|v\|_{H^{1/2}(e)}^2 \Big)^{1/2} \\
			&\le C(u, p) h\|v\|_{V_h}.
	\end{align*}
	Collecting the estimates above concludes $|\mathrm{Res}(v)| \le C(u, p)h \vertiii{v}_{V_h}$.
\end{proof}

\subsection{Proof of \tref{thm: H1 error estimate}}
In view of the regularity property, stability of extension operators and interpolation estimates, it suffices to prove that
\begin{equation} \label{eq: goal of H1 error estimate}
	\vertiii{\Pi_h\tilde u - u_h}_{V_h} + \|R_h \tilde p - \mathring p_h\|_{Q_h} \le C(h+\epsilon) (\|\tilde u\|_{H^2(\tilde\Omega)} + \|\tilde p\|_{H^1(\tilde\Omega)}).
\end{equation}
In what follows, we abbreviate the quantity $C(\|\tilde u\|_{H^2(\tilde\Omega)} + \|\tilde p\|_{H^1(\tilde\Omega)})$ just as $C(u,p)$, and we set $v_h := \Pi_h\tilde u$, $q_h := R_h\tilde p + k_h = R_h(\tilde p + k_h)$, and $\mu_h := \Pi_h^\partial  \tilde\lambda + k_h = \Pi_h^\partial  (\tilde\lambda + k_h)$, where $k_h$ is given in \tref{thm: well-posedness}.

We start from Korn's inequality \eref{eq: discrete Korn's inequality} and \eref{eq: Galerkin orthogonality} to find that
\begin{align}
	C \vertiii{v_h - u_h}_{V_h}^2 &\le a_h(v_h - u_h, v_h - u_h) + j_h(v_h - u_h, v_h - u_h) \notag \\
	&= a_h(\tilde u - u_h, v_h - u_h) - j_h(u_h, v_h - u_h) + a_h(v_h - \tilde u, v_h - u_h) + j_h(v_h - \tilde u, v_h - u_h) \notag \\
	&= \mathrm{Res}(v_h - u_h) - b_h(q_h - p_h, v_h - u_h) - c_h(\mu_h - \lambda_h, (v_h - u_h)\cdot n_h) \notag \\
	&\hspace{2cm} + a_h(v_h - \tilde u, v_h - u_h) + j_h(v_h - \tilde u, v_h - u_h) \notag \\
	&=: I_1 + I_2 + I_3 + I_4 + I_5. \notag
\end{align}
By \lref{lem: asymptotic Galerkin orthogonality} and by the boundedness of $a_h$ and $j_h$, one has $|I_1 + I_4 + I_5| \le C(u,p)h \vertiii{v_h - u_h}_{V_h}$.
For $I_2$, since $\operatorname{div} u = 0$ in $\Omega$, it follows that
\begin{align*}
	I_2 &= b_h(q_h - p_h, v_h - \tilde u) + b_h(q_h - p_h, \tilde u) \\
		&\le \sum_{T \in \mathcal T_h} \|q_h - p_h\|_{L^2(T)} Ch_T \|\nabla \tilde u\|_{L^2(T)} + |(q_h - p_h, \operatorname{div}\tilde u)_{\Omega_h\setminus\Omega}| \\
		&\le C(u,p)h \|q_h - p_h\|_{L^2(\Omega_h)}.
\end{align*}
For $I_3$, it follows from \eref{eq2: L2 error of u cdot nh - g} that
\begin{align}
	I_3 &= -c_h(\mu_h - \lambda_h, \tilde u\cdot n_h - \tilde g) + \epsilon c_h(\mu_h - \lambda_h, \mu_h) - \epsilon \|\mu_h - \lambda_h\|_{L^2(\Gamma_h)}^2 \notag \\
		&\le \sum_{e\in\mathcal E_h^\partial} \|\mu_h - \lambda_h\|_{L^2(e)} \|\Pi_h^\partial (\tilde u\cdot n_h - \tilde g)\|_{L^2(e)} + \epsilon \|\mu_h - \lambda_h\|_{-1/2,\Lambda_h} \|\mu_h\|_{1/2,\Lambda_h} \notag \\
		&\le  C(u,p)h^{2\alpha - 1/2} \Big( \sum_{e\in\mathcal E_h^\partial} h_e \|\mu_h - \lambda_h\|_{L^2(e)}^2 \Big)^{1/2} + C(u,p)(\epsilon + h^2) \|\mu_h - \lambda_h\|_{-1/2,\Lambda_h}, \label{eq: H1 error proof 2}
\end{align}
where we have estimated $\mu_h$, using \lref{lem: compatibility with usual H1/2 norm} and \eref{eq: stability of kh}, by
\begin{equation*}
	\|\mu_h\|_{1/2, \Lambda_h} \le \|\Pi_h^\partial \tilde\lambda\|_{1/2, \Lambda_h} + C|k_h| \le C(u,p)(1 + h^2\epsilon^{-1}).
\end{equation*}
The errors for $\mu_h - \lambda_h$ in \eref{eq: H1 error proof 2} are bounded by the use of \eref{eq1: discrete inf-sup for c} and \eref{eq2: discrete inf-sup for c} as
\begin{align}
	&C \Big( \sum_{e\in\mathcal E_h^\partial} h_e \|\mu_h - \lambda_h\|_{L^2(e)}^2 \Big)^{1/2} + C \|\mu_h - \lambda_h\|_{-\frac12,\Lambda_h} \notag \\
		\le\; &\sup_{v_h \in V_h} \frac{c_h(\mu_h - \lambda_h, v_h \cdot n_h)}{\|v_h\|_{V_h}} 
		= \sup_{v_h \in V_h} \frac{\mathrm{Res}(v_h) - a_h(\tilde u - u_h, v_h) - b_h(q_h - p_h, v_h) + j_h(u_h, v_h)}{\|v_h\|_{V_h}} \notag \\
		\le\; &C(u,p) h + C\vertiii{v_h - u_h}_{V_h} + C \|q_h - p_h\|_{Q_h}. \label{eq: H1 error proof 3}
\end{align}

To estimate $\|q_h - p_h\|_{Q_h}$, we notice that
\begin{align*}
	q_h - p_h = R_h\tilde p - \mathring p_h = R_h\mathring{\tilde p} - \mathring p_h + \frac1{|\Omega_h|}(\tilde p, 1)_{\Omega_h},
\end{align*}
where the relation $(p, 1)_{\Omega} = 0$ combined with \eref{eq: boundary-skin estimate 3} gives
\begin{equation*}
	|(\tilde p, 1)_{\Omega_h}| = |(\tilde p, 1)_{\Omega_h\setminus\Omega} - (p, 1)_{\Omega\setminus\Omega_h}| \le \|\tilde p\|_{L^1(\Gamma(\delta))} \le Ch^2 \|\tilde p\|_{W^{1,1}(\tilde\Omega)}.
\end{equation*}
On the other hand, by the inf-sup condition \eref{eq: inf-sup for bh},
\begin{align}
	C\|R_h\mathring{\tilde p} - \mathring p_h\|_{Q_h} &\le \sup_{v_h\in\mathring V_h} \frac{b_h(R_h\mathring{\tilde p} - \mathring p_h, v_h)}{\|v_h\|_{V_h}} = \sup_{v_h\in\mathring V_h} \frac{b_h(\tilde p -  p_h, v_h)}{\|v_h\|_{V_h}} \notag \\
		&= \sup_{v_h\in\mathring V_h} \frac{\mathrm{Res}(v_h) - a_h(\tilde u - u_h, v_h) + j_h(u_h, v_h)}{\|v_h\|_{V_h}} \notag \\
		&\le C(u,p) h + C\vertiii{v_h - u_h}_{V_h}. \label{eq: H1 error proof 4}
\end{align}
Therefore, we obtain $\|q_h - p_h\|_{Q_h} \le C(u,p) h + C\vertiii{v_h - u_h}_{V_h}$, which concludes
\begin{align*}
	|I_2| + |I_3| &\le C(u, p) \big( h + h^{2\alpha - 1/2} + \epsilon \big) \big( C(u, p)h + \vertiii{v_h - u_h}_{V_h} \big) \\
		&\le C(u, p) \big( h^\alpha + \epsilon \big) \big( C(u, p)h + \vertiii{v_h - u_h}_{V_h} \big),
\end{align*}
where we note that $\max\{h, h^{2\alpha - 1/2}\} \le h^\alpha$ by definition of $\alpha$.

Combining the estimates above, we deduce that
\begin{equation*}
	\vertiii{v_h - u_h}_{V_h}^2 \le C(u,p)^2(h^\alpha + \epsilon)^2 + C(u,p)(h^\alpha + \epsilon)\vertiii{v_h - u_h}_{V_h},
\end{equation*}
from which \eref{eq: goal of H1 error estimate} follows.  This completes the proof of \tref{thm: H1 error estimate}.

\begin{rem}
	As for error estimation of the Lagrange multiplier, from \eref{eq: H1 error proof 3} we have
	\begin{equation} \label{eq: error estimate for Lagrange multiplier}
		\Big( \sum_{e\in\mathcal E_h^\partial} h_e \|\Pi_h^\partial \tilde\lambda + k_h - \lambda_h\|_{L^2(e)}^2 \Big)^{1/2} + \|\Pi_h^\partial \tilde\lambda + k_h - \lambda_h\|_{-1/2, \Lambda_h} \le C(u,p) (h^\alpha + \epsilon).
	\end{equation}
	This combined with \eref{eq: stability of kh} especially implies the stability
	\begin{equation} \label{eq: stability for Lagrange multiplier}
		\Big( \sum_{e\in\mathcal E_h^\partial} h_e \|\lambda_h\|_{L^2(e)}^2 \Big)^{1/2} + \|\lambda_h\|_{-1/2, \Lambda_h} \le C(u,p) (1 + \epsilon^{-1}h^2).
	\end{equation}
	From \rref{rem: stability}(i), the dependency of $\epsilon^{-1}$ may be omitted if $g = 0$.
\end{rem}

\section{$L^2$-error estimate} \label{sec6}
For the $L^2$-error analysis we need another consistency error estimate as follows:
\begin{lem} \label{lem: another estimate of Res(v)}
	In addition to the hypotheses of \lref{lem: asymptotic Galerkin orthogonality}, let $w \in H^2(\Omega)^N$ satisfy $\mathrm{div}\, w = 0$ in $\Omega$ and $w\cdot n = 0$ on $\Gamma$.
	Then we obtain
	\begin{equation*}
		|\mathrm{Res}(w)| \le Ch^{2\alpha}(\|u\|_{H^2(\Omega)} + \|p\|_{H^1(\Omega)}) \|w\|_{H^2(\Omega)}.
	\end{equation*}
\end{lem}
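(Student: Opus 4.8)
The plan is to locate inside $\mathrm{Res}(w)$ the one quantity whose order genuinely improves under the extra hypotheses on $w$ — the normal flux through $\Gamma_h$ — and to dispatch everything else with the boundary-skin machinery. First, since the extension $\tilde w$ lies in $H^2(\tilde\Omega)^N\subset C(\overline\Omega_h)^N$, we have $\jump{\tilde w}=0$ on every interior edge, so the jump sum in \eref{eq: definition of Res(v)} drops out. Next, for $N=3$ we have $h=h^{2\alpha}$ and $\vertiii{w}_{V_h}\le C\|w\|_{H^2(\Omega)}$, so \lref{lem: asymptotic Galerkin orthogonality}(ii) already delivers the estimate; I therefore only need to treat $N=2$, where the target is $O(h^2)$.

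The decisive step is a regrouping of the boundary terms. Expanding $-(\tilde\tau-\tilde\lambda n_h,w)_{\Gamma_h}$ and combining its $\tilde\lambda$-part with the final term of \eref{eq: definition of Res(v)}, I use $\Pi_h^\partial w\cdot n_h=\Pi_h^\partial(w\cdot n_h)$ together with the self-adjointness of $\Pi_h^\partial$ to collapse all $\tilde\lambda$-contributions into $c_h(\tilde\lambda,w\cdot n_h)=(\Pi_h^\partial\tilde\lambda,\Pi_h^\partial(w\cdot n_h))_{\Gamma_h}$. This is precisely where the hypotheses pay off: since $\operatorname{div}w=0$ and $w\cdot n=0$, \lref{lem: error estimate for flux} and \lref{lem: error estimate of u dot nh - g} apply verbatim with $u$ replaced by $w$ and $g=0$, yielding as in \eref{eq1: L2 error of u cdot nh - g} the super-convergent bound $\|\Pi_h^\partial(w\cdot n_h)\|_{L^2(\Gamma_h)}\le Ch^{2\alpha}\|w\|_{H^2(\Omega)}$; pairing this with $\|\Pi_h^\partial\tilde\lambda\|_{L^2(\Gamma_h)}\le C\|\lambda\|_{H^{1/2}(\Gamma)}\le C(\|u\|_{H^2(\Omega)}+\|p\|_{H^1(\Omega)})$ makes this term $O(h^{2\alpha})$. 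Replacing the crude $O(h)$ flux bound of \lref{lem: asymptotic Galerkin orthogonality} by this $O(h^{2\alpha})$ one is the single new ingredient.

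After the regrouping, $\mathrm{Res}(w)$ is reduced to the skin residual $(\tilde u-\nu\Delta\tilde u-\nu\nabla\operatorname{div}\tilde u+\nabla\tilde p-\tilde f,w)_{\Omega_h\setminus\Omega}$ and the traction residual $(\sigma(\tilde u,\tilde p)n_h-\tilde\tau,w)_{\Gamma_h}$. I would integrate the $\operatorname{div}\sigma(\tilde u,\tilde p)$-part of the first by parts over $\Omega_h\setminus\Omega$, turning it into $(\tilde u,w)_{\Omega_h\setminus\Omega}+(\sigma(\tilde u,\tilde p),\nabla w)_{\Omega_h\setminus\Omega}$ plus boundary integrals on $\Gamma_h$ and on $\Gamma$. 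In the thin layer $\delta_e=O(h^2)$ each of the two volume integrals is a product of two factors that are $O(h)$ in $L^2$ — by \eref{eq: corollary of boundary-skin estimate 3} applied to $\tilde u,\nabla\tilde u,\tilde p$ and to $w,\nabla w$ — hence $O(h^2)$. The $\Gamma_h$-boundary integral cancels the explicit $(\sigma(\tilde u,\tilde p)n_h,w)_{\Gamma_h}$, while on $\Gamma$ the identity $\sigma(u,p)n=\tau-\lambda n$ and $w\cdot n=0$ give $(\sigma n)\cdot w=\tau\cdot w$, which matches $-(\tilde\tau,w)_{\Gamma_h}$ up to the domain-perturbation remainder $\int_\Gamma\tau\cdot w-\int_{\Gamma_h}\tilde\tau\cdot w$; its $w$-part is $O(h^2)$ because $w\in H^2$ forces $\|\tilde w-w\circ\pi\|_{L^2(\Gamma_h)}\le Ch^2\|w\|_{H^2(\Omega)}$ via \eref{eq: boundary-skin estimate 2} and \eref{eq: boundary-skin estimate 3}, and its area-element part is $O(h^2)$ via \eref{eq: boundary-skin estimate 1}.

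The hard part will be this traction/data remainder. With only $u\in H^2(\Omega)$, $p\in H^1(\Omega)$ and $\tau\in H^{1/2}(\Gamma)$, the stress $\sigma(\tilde u,\tilde p)$ is merely $H^1$, so its boundary perturbation $\sigma(\tilde u,\tilde p)-\sigma(u,p)\circ\pi$ and the data perturbation $\tilde\tau-\tau\circ\pi$ are intrinsically only $O(h)$, and a term-by-term estimate therefore stalls at $O(h)$ — acceptable for $N=3$ but not for $N=2$. The resolution I would pursue is to ensure that these $O(h)$ contributions only ever appear inside the cancellations forced by the boundary condition $\sigma(u,p)n=\tau-\lambda n$ together with $w\cdot n=0$ and by the divergence-free flux super-convergence above, rewriting $\tilde f$ in the skin and $\tilde\tau$ on $\Gamma_h$ through the momentum operator and the stress of $(\tilde u,\tilde p)$ before estimating — consistently with the stated extension-independence of the rate. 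Executing this while tracking carefully the geometry of $\Omega_h\setminus\Omega$ and of its boundary pieces on $\Gamma$ and $\Gamma_h$ is where the real delicacy of the lemma lies.
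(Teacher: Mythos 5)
Your overall route coincides with the paper's: you collapse the two $\tilde\lambda$-contributions of \eref{eq: definition of Res(v)} into $c_h(\tilde\lambda, w\cdot n_h)$ via the self-adjointness of $\Pi_h^\partial$ and the edgewise constancy of $n_h$, exactly as in the paper's intermediate formula \eref{eq: another expression of Res(v)}, and you correctly single out the one genuinely new ingredient: the super-convergent flux bound \eref{eq1: L2 error of u cdot nh - g} applied with $g=0$ (legitimate because $\operatorname{div}w=0$ and $w\cdot n=0$), paired with $\|\Pi_h^\partial\tilde\lambda\|_{L^2(\Gamma_h)}\le C(\|u\|_{H^2(\Omega)}+\|p\|_{H^1(\Omega)})$. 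Your observation that \lref{lem: asymptotic Galerkin orthogonality}(ii) already settles $N=3$ because $h=h^{2\alpha}$ there is also correct, and mildly simplifies the paper's uniform treatment.

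The gap is the $N=2$ endgame, which you explicitly leave unexecuted and even suggest may be unattainable term by term. The paper does execute it, and the device you are missing is the \emph{signed} integration by parts over the whole symmetric difference: with $(f,g)'_{\Omega_h\triangle\Omega}:=(f,g)_{\Omega_h\setminus\Omega}-(f,g)_{\Omega\setminus\Omega_h}$ one has, for $v\in H^1(\tilde\Omega)^N$,
\begin{equation*}
(\sigma(\tilde u,\tilde p)n_h, v)_{\Gamma_h}-(\sigma(u,p)n, v)_{\Gamma}
=\tfrac{\nu}{2}(\mathbb E(\tilde u),\mathbb E(v))'_{\Omega_h\triangle\Omega}
-(\tilde p,\operatorname{div}v)'_{\Omega_h\triangle\Omega}
+(\nu\Delta\tilde u+\nu\nabla\operatorname{div}\tilde u-\nabla\tilde p, v)'_{\Omega_h\triangle\Omega},
\end{equation*}
which, inserted into \eref{eq: definition of Res(v)} together with the interior equation and $\sigma(u,p)n=\tau-\lambda n$ on $\Gamma$, turns $\mathrm{Res}(\tilde w)$ into signed skin-volume terms, plus $(\tau,\tilde w)_\Gamma-(\tilde\tau,\tilde w)_{\Gamma_h}$, minus $(\lambda,w\cdot n)_\Gamma$ (which vanishes since $w\cdot n=0$), plus $c_h(\tilde\lambda,\tilde w\cdot n_h)$. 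In this arrangement the quantity you fear, $\sigma(\tilde u,\tilde p)-\sigma(u,p)\circ\pi$ on $\Gamma_h$ (indeed only $O(h)$), never occurs: the stress enters only volume integrals over a layer of width $O(h^2)$ paired against $\mathbb E(\tilde w)$ or $\operatorname{div}\tilde w$, so \eref{eq: boundary-skin estimate 3} supplies a factor $O(h)$ from \emph{each} side, giving $O(h^2)$, while the $\tau$-difference is dispatched by \eref{eq: boundary-skin estimate 1}--\eref{eq: boundary-skin estimate 2}. Two further slips in your sketch: integrating by parts over $\Omega_h\setminus\Omega$ alone cannot cancel all of $(\sigma(\tilde u,\tilde p)n_h,\tilde w)_{\Gamma_h}$, since part of $\Gamma_h$ lies inside $\Omega$ --- this is precisely why both components of $\Omega_h\triangle\Omega$ must enter, with opposite signs; and you silently dropped $\tilde f$ from the skin residual, whereas it must be carried along and recombined (via the PDE on $\Omega\setminus\Omega_h$) into $(\tilde u-\tilde f,\tilde w)'_{\Omega_h\triangle\Omega}$. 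Your instinct that the data terms are delicate is not baseless --- the paper is terse exactly on this last term and on the $\nabla\tilde\tau$-part of the $\tau$-difference, where a naive Cauchy--Schwarz only yields $O(h)$ --- but a proof must either carry out the paper's term-wise bounds or make precise the cancellation you allude to; as submitted, the decisive estimate is missing.
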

\begin{proof}
	We introduce a signed integration over the boundary-skin layer by
	\begin{equation*}
		(f, g)_{\Omega_h\triangle\Omega}' := (f, g)_{\Omega_h\setminus\Omega} - (f, g)_{\Omega\setminus\Omega_h}.
	\end{equation*}
	Then it follows from integration by parts that for all $v \in H^1(\tilde\Omega)^N$
	\begin{equation*}
		(\sigma(\tilde u, \tilde p)n_h, v)_{\Gamma_h} - (\sigma(u, p)n, v)_{\Gamma} = \frac\nu2 (\mathbb E(\tilde u), \mathbb E(\tilde v))'_{\Omega_h \triangle \Omega} - (\tilde p, \mathrm{div}\, v)'_{\Omega_h \triangle \Omega} + (\nu\Delta\tilde u + \nu\nabla\operatorname{div}\tilde u - \nabla\tilde p, v)'_{\Omega_h \triangle \Omega}.
	\end{equation*}
	Substituting this formula into \eref{eq: definition of Res(v)}, recalling $\sigma(u, p)n = \tau - \lambda n$ on $\Gamma$, and noting that $\jump{v} = 0$ on each $e \in \mathring{\mathcal E}_h$, we obtain
	\begin{align}
		\mathrm{Res}(v) &= (\tilde u - \tilde f, v)'_{\Omega_h \triangle \Omega} + \frac\nu2 (\mathbb E(\tilde u), \mathbb E(v))'_{\Omega_h \triangle \Omega} - (\tilde p, \mathrm{div}\, v)'_{\Omega_h \triangle \Omega} \notag \\
		&\hspace{1cm} + (\tau, v)_\Gamma - (\tilde\tau, v)_{\Gamma_h} - (\lambda, v\cdot n)_\Gamma + c_h(\tilde\lambda, v\cdot n_h). \label{eq: another expression of Res(v)}
	\end{align}
	We now take $v = \tilde w$ and apply \eref{eq: boundary-skin estimate 1}--\eref{eq: boundary-skin estimate 3} to see that all the terms but the last one on the right-hand side of \eref{eq: another expression of Res(v)} can be bounded by $Ch^2 (\|u\|_{H^2(\Omega)} + \|p\|_{H^1(\Omega)}) \|w\|_{H^2(\Omega)}$.
	The last term is then estimated by \eref{eq1: L2 error of u cdot nh - g}, which completes the proof.
\end{proof}

\begin{proof}[Proof of \tref{thm: L2 error estimate}]
	In what follows, we abbreviate the quantity $C(\|\tilde u\|_{H^2(\tilde\Omega)} + \|\tilde p\|_{H^1(\tilde\Omega)})$ just as $C(u,p)$.
	Let $\varphi \in C_0^\infty(\Omega_h)^N$ be such that $\|\varphi\|_{L^2(\Omega_h)} = 1$ and estimate $(\tilde u - u_h, \varphi)_{\Omega_h}$.
	Let $(w, r) \in H^2(\Omega)^N \times H^1(\Omega)$ be the solution of the following dual problem ($\varphi$ is extended by 0 outside $\Omega_h$):
	\begin{equation*}
		\left\{
		\begin{aligned}
			w - \nu\Delta w + \nabla r &= \varphi \quad\text{in}\quad \Omega, \\
			\mathrm{div}\,w &= 0 \quad\text{in}\quad \Omega, \\
			w\cdot n &= 0 \quad\text{on}\quad \Gamma, \\
			(\mathbb I - n\otimes n) \sigma(w, r)n &= 0 \quad\text{on}\quad \Gamma.
		\end{aligned}
		\right.
	\end{equation*}
	Then we see that $\|w\|_{H^2(\Omega)} + \|r\|_{H^1(\Omega)} \le C$.
	Setting $w_h := \Pi_h \tilde w$, we find from integration by parts and from \eref{eq: Galerkin orthogonality} that
	{\allowdisplaybreaks
	\begin{align*}
		(\tilde u - u_h, \varphi)_{\Omega_h} &= (\tilde u - u_h, \varphi)_{\Omega_h \cap \Omega} + (\tilde u - u_h, \varphi)_{\Omega_h\setminus\Omega} \\
		&= (\tilde u - u_h, \tilde w - \nu\Delta\tilde w -\nu\nabla\operatorname{div}\tilde w + \nabla\tilde r)_{\Omega_h} - (\tilde u - u_h, \tilde w - \nu\Delta\tilde w -\nu\nabla\operatorname{div}\tilde w + \nabla\tilde r - \varphi)_{\Omega_h \setminus \Omega} \\
		&= a_h(\tilde u - u_h, \tilde w) - \sum_{e\in\mathring{\mathcal E}_h} (\jump{\tilde u - u_h}, \sigma(\tilde w, \tilde r)n_e)_e - (\tilde u - u_h, \sigma(\tilde w, \tilde r)n_h)_{\Gamma_h} \\
			&\hspace{1cm} - (\tilde u - u_h, \tilde w - \nu\Delta\tilde w -\nu\nabla\operatorname{div}\tilde w + \nabla\tilde r - \varphi)_{\Omega_h \setminus \Omega} \\	
		&= a_h(\tilde u - u_h, \tilde w - w_h) + \sum_{e\in\mathring{\mathcal E}_h} (\jump{u_h}, \sigma(\tilde w, \tilde r)n_e)_e - (\tilde u - u_h, \sigma(\tilde w, \tilde r)n_h)_{\Gamma_h} \\
			&\hspace{1cm} + \mathrm{Res}(w_h) - b_h(R_h\tilde p + k_h - p_h, w_h) - c_h(\tilde\lambda + k_h - \lambda_h, w_h\cdot n_h) + j_h(u_h, w_h) \\
			&\hspace{1cm} - (\tilde u - u_h, \tilde w - \nu\Delta\tilde w -\nu\nabla\operatorname{div}\tilde w + \nabla\tilde r - \varphi)_{\Omega_h \setminus \Omega} \\
		&= a_h(\tilde u - u_h, \tilde w - w_h) + \sum_{e\in\mathring{\mathcal E}_h} (\jump{u_h}, \sigma(\tilde w, \tilde r)n_e)_e  - (\tilde u - u_h, \sigma(\tilde w, \tilde r)n_h)_{\Gamma_h} \\
			&\hspace{1cm} - (\tilde u - u_h, \tilde w - \nu\Delta\tilde w -\nu\nabla\operatorname{div}\tilde w + \nabla\tilde r - \varphi)_{\Omega_h \setminus \Omega} - j_h(u_h, \tilde w - w_h) \\
			&\hspace{1cm} + \mathrm{Res}(w_h) - b_h(R_h\tilde p + k_h - p_h, \tilde w) - (\Pi_h^\partial \tilde\lambda + k_h - \lambda_h, \Pi_h^\partial \tilde w \cdot n_h)_{\Gamma_h} \\
		&=: \sum_{i=1}^{8} I_i,
	\end{align*}
	}where we made use of the fact that $b_h(q_h, \Pi_h\tilde w) = b_h(q_h, \tilde w)$ for $q_h \in Q_h$ in the fifth equality.
	
	Let us bound each term of $I_1, \dots, I_{9}$.
	By interpolation estimates and \lref{lem: a boundary-skin estimate for nonconforming approximation}, one has $|I_1 + I_2 + I_4 + I_5| \le C(u,p) h \vertiii{\tilde u - u_h}_{V_h}$.
	It follows from \lref{lem: another estimate of Res(v)} and \lref{lem: asymptotic Galerkin orthogonality}(ii) that
	\begin{align*}
		|I_6| \le |\mathrm{Res}(\tilde w)| + |\mathrm{Res}(\tilde w - w_h)| \le C(u,p)h^{2\alpha} \|w\|_{H^2(\Omega)} + C(u,p)h \vertiii{\tilde w - w_h}_{V_h} \le C(u,p)h^{2\alpha}.
	\end{align*}
	For $I_7$, the pressure error estimate obtained in \tref{thm: H1 error estimate} gives
	\begin{equation*}
		|I_7| = |(R_h\tilde p + k_h - p_h, \operatorname{div}\tilde w)_{\Omega_h\setminus\Omega}| \le \|R_h\tilde p + k_h - p_h\|_{Q_h} Ch \|\tilde w\|_{H^2(\Omega_h)} \le C(u,p)h^2 + Ch \vertiii{\tilde u - u_h}.
	\end{equation*}
	For $I_8$, as a result of \eref{eq: error estimate for Lagrange multiplier} and \eref{eq2: L2 error of u cdot nh - g} we have
	\begin{align*}
		|I_8| &\le \Big( \sum_{e\in\mathcal E_h^\partial} h_e \|\Pi_h^\partial \tilde\lambda + k_h - \lambda_h\|_{L^2(e)}^2 \Big)^{1/2} \times Ch^{2\alpha - 1/2} \|\tilde w\|_{H^2(\tilde\Omega)} \\
			&\le C(u,p) (h + \vertiii{\tilde u - u_h}_{V_h}) h^{2\alpha - 1/2}.
	\end{align*}
	
	It remains to estimate $I_3$.
	Setting $\tilde\mu := \sigma(\tilde w, \tilde r)n_h\cdot n_h$ and $\mu_h := \Pi_h^\partial \tilde\mu$, we obtain
	\begin{align*}
		-I_3 &= ( \tilde u - u_h, (\mathbb I - n_h\otimes n_h)\sigma(\tilde w, \tilde r)n_h )_{\Gamma_h} + ( (\tilde u - u_h)\cdot n_h, \tilde\mu )_{\Gamma_h} =: I_{31} + ( (\tilde u - u_h)\cdot n_h, \tilde\mu )_{\Gamma_h} \\
		&= I_{31} + ( (\tilde u - u_h)\cdot n_h, \tilde\mu - \mu_h )_{\Gamma_h} +  ( (\Pi_h^\partial \tilde u - u_h)\cdot n_h, \mu_h )_{\Gamma_h} \\
		&= I_{31} + ( (\tilde u - u_h)\cdot n_h, \tilde\mu - \mu_h )_{\Gamma_h} + ( \Pi_h^\partial (\tilde u\cdot n_h - \tilde g), \mu_h )_{\Gamma_h} - \epsilon c_h(\lambda_h, \mu_h) \\
		&=: I_{31} + I_{32} + I_{33} + I_{34}.
	\end{align*}
	Since $(\mathbb I - n\otimes n)\sigma(w, r)n = 0$ on $\Gamma$, $\|n\circ\pi - n_h\|_{L^\infty(\Gamma_h)} \le Ch$, and $\|\sigma(\tilde w, \tilde r) - \sigma(w, r)\circ\pi\|_{L^2(\Gamma_h)} \le C\delta^{1/2} \|\nabla\sigma(\tilde w, \tilde r)\|_{L^2(\Gamma(\delta))}$, we have
	\begin{equation*}
		|I_{31}| \le Ch \|\tilde u - u_h\|_{L^2(\Gamma_h)} \le Ch \|\tilde u - u_h\|_{L^2(\Omega_h)}^{1/2} \|\tilde u - u_h\|_{V_h}^{1/2}.
	\end{equation*}
	For $I_{32}$ we get
	\begin{equation*}
		|I_{32}| \le C \|\tilde u - u_h\|_{L^2(\Gamma_h)} \|\sigma(\tilde w, \tilde r) - \Pi_h^\partial \sigma (\tilde w, \tilde r)\|_{L^2(\Gamma_h)} \le Ch^{1/2} \|\tilde u - u_h\|_{L^2(\Omega_h)}^{1/2} \|\tilde u - u_h\|_{V_h}^{1/2}.
	\end{equation*}
	By \eref{eq1: L2 error of u cdot nh - g}, $|I_{33}| \le C(u,p) h^{2\alpha} \|\mu\|_{L^2(\Gamma_h)} \le C(u,p) h^{2\alpha}$.
	From \eref{eq: stability for Lagrange multiplier} and \lref{lem: compatibility with usual H1/2 norm when nh is involved} it follows that
	\begin{equation*}
		|I_{34}| \le \epsilon \|\lambda_h\|_{-1/2, \Lambda_h} \|\mu_h\|_{1/2, \Lambda_h} \le C(u,p)(\epsilon + h^2) \|\sigma(\tilde w, \tilde r)\|_{H^{1/2}(\Gamma_h)} \le C(u,p)(\epsilon + h^2).
	\end{equation*}
	Consequently,
	\begin{equation*}
		|I_3| \le Ch^{1/2} \|\tilde u - u_h\|_{L^2(\Omega_h)}^{1/2} \|\tilde u - u_h\|_{V_h}^{1/2} + C(u,p)(h^{2\alpha} + \epsilon).
	\end{equation*}
	
	Recalling $\varphi$ is arbitrary, collecting the estimates above, and substituting the result of the $H^1$-error estimate, we deduce that
	\begin{align*}
		\|\tilde u - u_h\|_{L^2(\Omega_h)} &\le Ch\vertiii{\tilde u - u_h}_{V_h} + (C(u,p)h + C\vertiii{\tilde u - u_h}_{V_h}) h^{2\alpha - 1/2} \\
			&\hspace{1cm} + Ch^{1/2} \|\tilde u - u_h\|_{L^2(\Omega_h)}^{1/2} \|\tilde u - u_h\|_{V_h}^{1/2} + C(u,p)(h^{2\alpha} + \epsilon) \\
			&\le \frac12 \|\tilde u - u_h\|_{L^2(\Omega_h)} + C(u,p)h(h^\alpha + \epsilon) + C(u,p)h^{2\alpha - 1/2} (h^\alpha + \epsilon) + C(u,p)(h^{2\alpha} + \epsilon),
	\end{align*}
	which concludes $\|\tilde u - u_h\|_{L^2(\Omega_h)} \le C(u,p)(h^{2\alpha} + \epsilon)$.
\end{proof}

\section{Numerical results} \label{sec7}
In this section, we present numerical results using the proposed scheme \eqref{eq: discrete 2-field formulation} in two- and three- dimensional cases to validate our theoretical results.
The same test problems as in \cite{KOZ16} are considered.
In the following, we set $\nu = 1$ and use unstructured meshes.
All computations here were done with FEniCS \cite{fenics}.

\subsection{Two-dimensional case}
 We consider the problem \eqref{eq: Stokes slip BC} where  the domain $\Omega$
is the unit disk, i.e., $\Omega = \{ (x,y) \in \mathbb{R}^2 : x^2+y^2 < 1\}$.
The data $f$, $g$, and $\tau$ are chosen so that the exact solution is
 \begin{align*}
  & u(x,y) = (-y(x^2+y^2), x(x^2+y^2))^\top, \\
  & p(x,y) = 8xy.
 \end{align*}
We set the parameters as $\epsilon = 0.1h^2$ and $\gamma = 2$.
Table \ref{tb1} shows the history of convergence for the velocity and pressure.
We observe that our method achieves optimal orders in all cases, which is
in full agreement with Theorem \ref{thm: H1 error estimate} with $\alpha = 1$.

\begin{table}[ht]
	\caption{Convergence history in the two-dimensional case}
	\label{tb1}
	\centering
	\begin{tabular}{cccccccc} \hline
		& \multicolumn{2}{c}{$\|u-u_h\|_{L^2(\Omega_h)}$} & \multicolumn{2}{c}{$\|u-u_h\|_{H^1(\Omega_h)}$} & \multicolumn{2}{c}{$\|p-p_h\|_{L^2(\Omega_h)}$} \\ \hline
		$h$ & Error & Order & Error & Order & Error & Order \\ \hline
		0.1734 & 3.85E-02 & -- & 2.49E-01 & -- & 2.48E-01 & -- \\
		0.0857 & 9.59E-03 & 1.97  & 1.17E-01 & 1.07 & 1.21E-01 & 1.02 \\
		0.0459 & 2.53E-03 & 2.13  & 5.94E-02 & 1.09 & 6.21E-02 & 1.06 \\
		0.0232 & 6.46E-04 & 2.00  & 2.98E-02 & 1.01 & 3.13E-02 & 1.00 \\ \hline
	\end{tabular}
\end{table}

\subsection{Three-dimensional case}
In this example, the problem with $\Omega = \{(x,y,z)\in\mathbb{R}^3: x^2+y^2+z^2<1\}$ is considered.
The data $f$, $g$ and $\tau$ are chosen so that the exact
solution becomes
\begin{align*}
 & u(x,y,z) = (10x^2yz(y-z), 10xy^2z(z-x), 10xyz^2(x-y))^\top, \\
 & p(x,y,z) = 10xyz(z+y+z).
\end{align*}
We set $\epsilon=0.1h$ and $\gamma=5$.
The history of convergence is displayed in Table \ref{tb2}.
From the result, we see that all the orders seem to be one.
The order of the $L^2$ error of velocity coincides with Theorem \ref{thm: L2 error estimate} where $\alpha=1/2$.
 On the other hand, the $H^1$ and $L^2$ errors of velocity and pressure, respectively,
 converge with the optimal order, which is faster than expected
 in Theorem \ref{thm: L2 error estimate}.
\begin{table}[hb]
	\caption{Convergence history in the three-dimensional case}
	\label{tb2}
	\centering
	\begin{tabular}{ccccccc} \hline
		& \multicolumn{2}{c}{$\|u-u_h\|_{L^2(\Omega_h)}$} & \multicolumn{2}{c}{$\|u-u_h\|_{H^1(\Omega_h)}$} & \multicolumn{2}{c}{$\|p-p_h\|_{L^2(\Omega_h)}$} \\ \hline
		$h$ & Error & Order & Error & Order & Error & Order \\ \hline
		0.1853 & 8.62E-02 &  -- & 7.88E-01 &  -- & 5.39E-01 & -- \\
		0.0959 & 4.72E-02 & 0.87  & 4.08E-01 & 0.95  & 3.02E-01 & 0.84  \\
		0.0679 & 3.42E-02 & 0.79  & 2.86E-01 & 0.87  & 2.19E-01 & 0.79  \\
		0.0500 & 2.56E-02 & 1.01  & 2.12E-01 & 1.04  & 1.65E-01 & 1.00  \\ \hline
	\end{tabular}
\end{table}

It is noted that Krylov linear solvers, such as GMRES and BiCGSTAB methods, 
fail to solve the resulting system of linear equations when $\epsilon$ is very small.
We do not here present the numerical result because it is similar to that shown in \cite[Table 3]{KOZ16}.

\appendix
\section{Proofs of Lemmas in \sref{sec3}}
\subsection{Proof of \lref{lem: compatibility with usual H1/2 norm}} \label{sec: A.1}
In view of the definition of $\|\cdot\|_{1/2,\Lambda_h}$, the lemma is reduced to:
\begin{align}
	\|E_h^\partial \Pi_h^\partial \mu\|_{H^{1/2}(\Gamma_h)} &\le C \|\mu\|_{H^{1/2}(\Gamma_h)}, \label{eq: compatibility of discrete H1/2 Step 1} \\
	\sum_{e \in \mathcal E_h^\partial} \sum_{e' \in \mathcal E_h^\partial(e)} h_e^{N-2} |\Pi_h^\partial \mu(m_{e'}) - \Pi_h^\partial \mu(m_e)|^2& \le C \|\mu\|_{H^{1/2}(\Gamma_h)}, \label{eq: compatibility of discrete H1/2 Step 2}
\end{align}
which will be established in the following Steps 1 and 2 respectively.

\textbf{Step 1.} It is sufficient to show that the operator $E_h^\partial \Pi_h^\partial$ is stable in $L^2(\Gamma_h)$ and $H^1(\Gamma_h)$.
Then \eref{eq: compatibility of discrete H1/2 Step 1} follows by interpolation.
To this end, for $x \in e \in \mathcal E_h^\partial$ we calculate
\begin{equation*}
	E_h^\partial \Pi_h^\partial \mu(x) = \sum_{p\in\mathcal V_h(e)} \bigg( \frac1{\#\mathcal E_h^\partial(p)} \sum_{e'\in\mathcal E_h^\partial(p)} \frac1{|e'|} \int_{e'} \mu \, ds \bigg) \bar\phi_p(x) =: \sum_{p\in\mathcal V_h(e)} A_p \bar\phi_p(x).
\end{equation*}
The H\"older and Cauchy--Schwarz inequalities give
\begin{equation*}
	|A_p| \le C \sum_{e'\in\mathcal E_h^\partial(p)} |e'|^{-1/2} \|\mu\|_{L^2(e')} \le C h_e^{-(N-1)/2} \|\mu\|_{L^2(\triangle_e)},
\end{equation*}
where $\triangle_e := \bigcup\mathcal E_h^\partial(e)$ stands for a macro element of $e$.
Hence we obtain
\begin{equation*}
	\|E_h^\partial \Pi_h^\partial \mu\|_{L^2(e)}^2 = \int_e \big| \sum_{p\in\mathcal V_h(e)} A_p \bar\phi_p \big|^2 \, ds \le C \max_{p\in\mathcal V_h(e)} |A_p|^2 \max_{p\in\mathcal V_h(e)} \|\bar\phi_p\|_{L^2(e)}^2 \le C\|\mu\|_{L^2(\triangle_e)}^2,
\end{equation*}
which, after the summation for $e\in\mathcal E_h^\partial$, implies the $L^2$-stability.

For the $H^1$-stability, noting that $\sum_{p\in\mathcal V_h(e)} \bar\phi_p(x) = 1$ for $x \in e \in \mathcal E_h^\partial$, we have
\begin{equation*}
	\nabla_e E_h^\partial \Pi_h^\partial \mu(x) = \sum_{p\in\mathcal V_h(e)} \bigg( \frac1{\#\mathcal E_h^\partial(p)} \sum_{e'\in\mathcal E_h^\partial(p)} \frac1{|e'|} \int_{e'} (\mu - \theta) \, ds \bigg) \nabla_e \bar\phi_p(x) \qquad \forall \theta \in P_0(\triangle_e).
\end{equation*}
where the $\nabla_e$ means the surface gradient along $e$.
By a calculation similar to the one above, we get
\begin{equation} \label{eq3: compatibility of discrete H1/2 norm}
	\|\nabla_e E_h^\partial \Pi_h^\partial \mu\|_{L^2(e)}^2 \le Ch_e^{-2} \|\mu - \theta\|_{L^2(\triangle_e)}^2 \qquad \forall \theta \in P_0(\triangle_e).
\end{equation}
Now the Bramble--Hilbert theorem yields $\inf_{\theta\in P_0(\triangle_e)} \|\mu - \theta\|_{L^2(\triangle_e)} \le Ch_e |\mu|_{H^1(\triangle_e)}$ (see the remark below for more details).
Therefore, the $H^1$-stability is obtained, and, as we noticed earlier, this proves \eref{eq: compatibility of discrete H1/2 Step 1}.

\textbf{Step 2.} We notice that
\begin{align*}
	|\Pi_h^\partial \mu(m_{e}) - \Pi_h^\partial \mu(m_{e'})|^2
		&= \left| \frac1{|e| |e'|} \int_{e\times e'} \big(\mu(x) - \mu(y)\big) \, ds(x) ds(y) \right|^2 \\
		&\le \frac1{|e| |e'|} \int_{e\times e'} |\mu(x) - \mu(y)|^2 \, ds(x) ds(y) \\
		&\le Ch_e^{-2(N-1)} \times Ch_e^N \int_{e\times e'} \frac{|\mu(x) - \mu(y)|^2}{|x - y|^N} \, ds(x) ds(y) \\
		&\le Ch_e^{2-N} \|\mu\|_{H^{1/2}(\triangle_e)}^2,
\end{align*}
where we have used $|x - y| \le Ch_e$ for $x\in e \in\mathcal E_h^\partial$ and $y\in e'\in\mathcal E_h^\partial(e)$.
Then \eref{eq: compatibility of discrete H1/2 Step 2} follows by taking summation for $e$.

\begin{rem}
	The Bramble--Hilbert theorem used after \eref{eq3: compatibility of discrete H1/2 norm} may be justified as follows.
	Adopting the notation of local coordinates introduced in \cite[Section 8]{KOZ16}, we may assume that $\Delta_e$ is contained in some local coordinate neighborhood $U$ such that $\Gamma_h \cap U$ admits a graph representation $(y', \phi_h(y'))$.
	Let $B: \mathbb R^N \to \mathbb R^{N-1}; \; (y', y_N) \mapsto y'$ denote the projection to the base set and $\Delta_e' := B(\Delta_e)$.
	We find that the norms $\|f\|_{L^2(\Delta_e)}$ and $\|\nabla_{\Gamma_h} f\|_{L^2(\Delta_e)}$ are equivalent to $\|f'\|_{L^2(\Delta_e')}$ and $\|\nabla_{y'}f'\|_{L^2(\Delta_e')}$ respectively,
	where $f' = f'(y')$ refers to the local coordinate representation of a function $f$ given on $\Gamma_h$, and $\nabla_{\Gamma_h}$ is the surface gradient along $\Gamma_h$.
	Then the desired inequality is reduced to show
	\begin{equation*}
		\inf_{\theta' \in P_0(\Delta_e')} \|\mu' - \theta'\|_{L^2(\Delta_e')} \le Ch_e \|\nabla_{y'} \mu'\|_{L^2(\Delta_e')},
	\end{equation*}
	which indeed follows from \cite[Lemma 4.3.8]{BrSc07} together with the regularity of the meshes (note that $\operatorname{diam} \Delta_e' \le Ch_e$ and that $\Delta_e'$ is star-shaped with respect to the inscribed ball of $e'$, whose radius is greater than $\rho_{T_e}$).
\end{rem}

\subsection{Proof of \lref{lem: compatibility with usual H1/2 norm when nh is involved}} \label{sec: A.2}
Below we only prove the scalar case, since the other two cases may be treated similarly.
We first notice, from \lref{lem: compatibility with usual H1/2 norm}, that $\| \Pi_h^\partial(\mu\, n\circ\pi) \|_{1/2, \Lambda_h} \le C \|\mu\, n\circ\pi\|_{H^{1/2}(\Gamma_h)} \le C \|\mu\|_{H^{1/2}(\Gamma_h)}$.
Hence it remains to deal with $\|\Pi_h^\partial (\mu(n_h - n\circ\pi))\|_{1/2, \Lambda_h}$, and, in view of the definition of $\|\cdot\|_{1/2, \Lambda_h}$, it suffices to show the following:
\begin{align}
	\|E_h^\partial \Pi_h^\partial (\mu(n_h - n\circ\pi))\|_{H^{1/2}(\Gamma_h)} &\le Ch^{1/2} \|\mu\|_{L^2(\Gamma_h)}, \label{eq: stability estimate 1 when nh appears} \\
	\sum_{e \in \mathcal E_h^\partial} \sum_{e' \in \mathcal E_h^\partial(e)} h_e^{N-2} \bigg| \frac1{|e|} \int_e \mu(n_h - n\circ\pi)\, ds - \frac1{|e'|} \int_{e'} \mu(n_h - n\circ\pi)\, ds \bigg|^2 &\le Ch \|\mu\|_{L^2(\Gamma_h)}^2. \label{eq: stability estimate 2 when nh appears}
\end{align}

Estimate \eref{eq: stability estimate 1 when nh appears} follows by interpolation if we establish
\begin{align*}
	\|E_h^\partial \Pi_h^\partial (\mu(n_h - n\circ\pi))\|_{L^2(\Gamma_h)} &\le Ch \|\mu\|_{L^2(\Gamma_h)}, \\
	\|E_h^\partial \Pi_h^\partial (\mu(n_h - n\circ\pi))\|_{H^1(\Gamma_h)} &\le C \|\mu\|_{L^2(\Gamma_h)}.
\end{align*}
In fact, for $x\in e\in\mathcal E_h^\partial$ we have
\begin{align*}
	E_h^\partial \Pi_h^\partial [\mu (n_h - n\circ\pi)](x) = \sum_{p \in \mathcal V_h(e)} \frac1{\#\mathcal E_h^\partial(p)} \Big( \sum_{e' \in \mathcal E_h^\partial(p)} \frac1{|e|} \int_e \mu(n_h - n\circ\pi)\, ds \Big) \bar\phi_p(x).
\end{align*}
Noting that $\|n_h - n\circ\pi\|_{L^\infty(e)} \le Ch_e$, we obtain
\begin{align*}
	\|E_h^\partial \Pi_h^\partial (\mu (n_h - n\circ\pi))\|_{L^2(e)}^2 &\le C \sum_{e'\in\mathcal E_h^\partial(e)} |e|^{-1} \|\mu\|_{L^2(e)}^2 h_e^2 \times \sup_{p\in\mathcal V_h(e)} \|\bar\phi_p\|_{L^2(e)}^2 \\
		&\le C \sum_{e'\in\mathcal E_h^\partial(e)} h_e^{3-N} \|\mu\|_{L^2(e)}^2 \times h_e^{N-1} \le C h^2 \|\mu\|_{L^2(\triangle_e)}^2,
\end{align*}
which, after the summation for $e \in \mathcal E_h^\partial$, implies the $L^2$-estimate.
One can obtain the $H^1$-estimate in a similar way, and thus \eref{eq: stability estimate 1 when nh appears} is proved.

Finally, a direct computation shows that the left-hand side of \eref{eq: stability estimate 2 when nh appears} is bounded by
\begin{equation*}
	C\sum_{e \in \mathcal E_h^\partial} \sum_{e'\in\mathcal E_h^\partial(e)} h_e^{N-2} \big( |e|^{-1} \|\mu\|_{L^2(e)}^2 h_e^2 + |e'|^{-1} \|\mu\|_{L^2(e')}^2 h_{e'}^2 \big) \le Ch \|\mu\|_{L^2(\Gamma_h)}^2.
\end{equation*}
This completes the proof of \lref{lem: compatibility with usual H1/2 norm when nh is involved}.

\subsection{Proof of \lref{lem: discrete lifting theorem}} \label{sec: A.3}
By the standard lifting theorem, there exists a linear operator $L_h : H^{1/2}(\Gamma_h)^N \to H^1(\Omega_h)^N$ such that $(L_h\psi)|_{\Gamma_h} = \psi$ and $\|L_h\psi\|_{H^1(\Omega_h)} \le C\|\psi\|_{H^{1/2}(\Gamma_h)}$.
We then define $v_h \in V_h$ by
\begin{equation*}
	v_h(m_e) =
	\begin{cases}
		\left[ \Pi_h L_h E_h^\partial (\mu_h n_h) \right](m_e) &\text{for } e\in\mathring{\mathcal E}_h, \\
		(\mu_h n_h)(m_e) &\text{for } e\in\mathcal E_h^\partial.
	\end{cases}
\end{equation*}
It is clear that $v_h \cdot n_h = \mu_h$ at all $m_e$'s lying on $\Gamma_h$.
We prove \eref{eq: discrete lift stability} in the following three steps.

\textbf{Step 1.} Let us show
\begin{equation} \label{eq: error between vh and naive lift}
	\|v_h - \Pi_h L_h E_h^\partial (\mu_h n_h)\|_{V_h} \le C\|\mu_h\|_{1/2,\Lambda_h}.
\end{equation}
Observe that
\begin{equation*}
	v_h - \Pi_h L_h E_h^\partial (\mu_h n_h) = \sum_{e\in\mathcal E_h^\partial} \left[ \mu_hn_h - \Pi_h L_h E_h^\partial (\mu_h n_h) \right](m_e) \phi_e.
\end{equation*}
By the definitions of $\Pi_h$, $L_h$, and $E_h^\partial$, for $e \in \mathcal E_h^\partial$ we obtain
\begin{align*}
	\left[ \Pi_h L_h E_h^\partial (\mu_h n_h) \right](m_e) &= \frac1{|e|} \int_e L_hE_h^\partial(\mu_hn_h) \, ds = \frac1{|e|} \int_e E_h^\partial(\mu_hn_h) \, ds \\
		&= \frac1{|e|} \int_e \sum_{p\in\mathcal V_h(e)} \frac1{\#\mathcal E_h^\partial(p)} \sum_{e'\in\mathcal E_h^\partial(p)} (\mu_hn_h)(m_{e'}) \bar\phi_p \, ds.
\end{align*}
Therefore, noting that $\sum_{p\in\mathcal V_h(e)} \bar\phi_p \equiv 1$, we deduce
\begin{align*}
	v_h - \Pi_h L_h E_h^\partial (\mu_h n_h) &= \sum_{e\in\mathcal E_h^\partial} \left[ \frac1{|e|} \sum_{p\in\mathcal V_h(e)} \frac1{\#\mathcal E_h^\partial(p)} \sum_{e'\in\mathcal E_h^\partial(p)} \Big( (\mu_hn_h)(m_{e}) - (\mu_hn_h)(m_{e'}) \Big) \int_e \bar\phi_p\, ds \right] \phi_e \\
		&=: \sum_{e\in\mathcal E_h^\partial} A_e \phi_e,
\end{align*}
where the coefficient $A_e$ can be estimated, using $|n_h(m_e) - n_h(m_{e'})| \le Ch_e$, by
\begin{equation*}
	|A_e| \le C \sum_{e'\in\mathcal E_h^\partial(e)} |\mu_h(m_e) - \mu_h(m_{e'})| + Ch_e \sum_{e'\in\mathcal E_h^\partial(e)} |\mu_h(m_{e'})|.
\end{equation*}
Then we conclude that
\begin{align*}
	\| v_h - \Pi_h L_h E_h^\partial (\mu_h n_h) \|_{V_h}^2 &= \sum_{T \in \mathcal T_h} \Big\| \sum_{e\in\mathcal E_h^\partial} A_e\phi_e \Big\|_{H^1(T)}^2 = \sum_{e\in\mathcal E_h^\partial} \|A_e \phi_e\|_{H^1(T_e)}^2 \le C \sum_{e\in\mathcal E_h^\partial} |A_e|^2 h_e^{N-2} \\
		&\le C \sum_{e\in\mathcal E_h^\partial} \sum_{e'\in\mathcal E_h^\partial(e)} h_e^{N-2} |\mu_h(m_e) - \mu_h(m_{e'})|^2 + C \sum_{e\in\mathcal E_h^\partial} \sum_{e'\in\mathcal E_h^\partial(e)} h_e^{N} |\mu_h(m_{e'})|^2.
\end{align*}
The last term on the right-hand side can be bounded by $h\|\mu_h\|_{L^2(\Gamma_h)}^2$ and this proves \eref{eq: error between vh and naive lift}.

\textbf{Step 2.} The stability properties of $\Pi_h$ and $L_h$ imply
\begin{equation*}
	\|\Pi_h L_h E_h^\partial (\mu_h n_h)\|_{V_h} \le C \|L_h E_h^\partial (\mu_h n_h)\|_{H^1(\Omega_h)} \le C\|E_h^\partial (\mu_h n_h)\|_{H^{1/2}(\Gamma_h)}.
\end{equation*}
Furthermore, by $n\circ\pi \in W^{1,\infty}(\Gamma_h)$ and by the definition of $\|\cdot\|_{1/2, \Lambda_h}$, one has
\begin{equation*}
	\|(E_h^\partial \mu_h) n\circ\pi\|_{H^{1/2}(\Gamma_h)} \le C \|E_h^\partial\mu_h\|_{H^{1/2}(\Gamma_h)} \le C\|\mu_h\|_{1/2,\Lambda_h}.
\end{equation*}
Therefore, to establish \eref{eq: discrete lift stability} it remains to prove
\begin{equation*}
	\|E_h^\partial (\mu_h n_h) - (E_h^\partial \mu_h) n\circ\pi\|_{H^{1/2}(\Gamma_h)} \le C h^{1/2} \|\mu_h\|_{L^2(\Gamma_h)}.
\end{equation*}
This estimate follows from interpolation between $L^2(\Gamma_h)$ and $H^1(\Gamma_h)$ if we prove 
\begin{align}
	\|E_h^\partial (\mu_h n_h) - (E_h^\partial \mu_h) n\circ\pi\|_{L^2(\Gamma_h)} &\le C h \|\mu_h\|_{L^2(\Gamma_h)}, \label{eq: L2 estimate for Eh muh nh} \\
	\|E_h^\partial (\mu_h n_h) - (E_h^\partial \mu_h) n\circ\pi\|_{H^1(\Gamma_h)} &\le C \|\mu_h\|_{L^2(\Gamma_h)}. \label{eq: H1 estimate for Eh muh nh}
\end{align}

\textbf{Step 3.} Let us prove \eref{eq: L2 estimate for Eh muh nh} and \eref{eq: H1 estimate for Eh muh nh}.
By the definition of $E_h^\partial$, for $x \in e \in \mathcal E_h^\partial$ we calculate
\begin{equation*}
	[E_h^\partial (\mu_h n_h) - (E_h^\partial \mu_h) n\circ\pi](x) = \sum_{p\in\mathcal V_h(e)} \frac1{\#\mathcal E_h^\partial(p)} \sum_{e'\in\mathcal E_h^\partial(p)} \mu_h(m_{e'}) \big( n_h(m_{e'}) - n\circ\pi(x) \big) \bar\phi_p(x).
\end{equation*}
Therefore,
\begin{align*}
	\| E_h^\partial (\mu_h n_h) - (E_h^\partial \mu_h) n\circ\pi \|_{L^2(e)}^2 &\le C \sum_{e'\in\mathcal E_h^\partial(e)} |\mu_h(m_{e'})|^2 \sup_{x\in e}|n_h(m_{e'}) - n\circ\pi(x)|^2  \sup_{p\in\mathcal V_h(e)} \|\bar\phi_p\|_{L^2(e)}^2 \\
		&\le C \sum_{e'\in\mathcal E_h^\partial(e)} h_e^{N+1} |\mu_h(m_{e'})|^2 \le Ch^2 \sum_{e'\in\mathcal E_h^\partial(e)} \|\mu_h\|_{L^2(e')}^2,
\end{align*}
where we have used the fact $|n_h(m_{e'}) - n\circ\pi(x)| \le Ch_e$.
Adding the above estimates for $e \in \mathcal E_h^\partial$ yields \eref{eq: L2 estimate for Eh muh nh}.
Estimate \eref{eq: H1 estimate for Eh muh nh} can be proved similarly, and this completes the proof of \lref{lem: discrete lifting theorem}.

\subsection{Proof of \lref{lem: a boundary-skin estimate for nonconforming approximation}} \label{sec: A.4}
It suffices to prove $\|v + v_h\|_{L^2(\Omega_h\setminus\Omega)} \le Ch \vertiii{v + v_h}_{V_h}$ for all $v \in H^1(\Omega_h)^N$ and $v_h \in V_h$.
We define an enriching operator $E_h: V_h \to \overline V_h$ by
\begin{equation*}
	E_hv_h = \sum_{p \in \mathcal V_h} \Big( \frac1{\#\mathcal T_h(p)} \sum_{T \in \mathcal T_h(p)} v_h|_T(p) \Big) \bar\phi_p,
\end{equation*}
where $\mathcal T_h(p) := \{ T\in\mathcal T_h \,:\, p \in T \}$ means the elements that share the vertex $p$.
In view of \eref{eq: corollary of boundary-skin estimate 3} we have
\begin{align}
	\|v + v_h\|_{L^2(\Omega_h\setminus\Omega)} &\le \|v + E_hv_h\|_{L^2(\Omega_h\setminus\Omega)} + \|v_h - E_hv_h\|_{L^2(\Omega_h\setminus\Omega)} \notag \\
		&\le Ch \|v + E_hv_h\|_{H^1(\Omega_h)} + \|v_h - E_hv_h\|_{L^2(\Omega_h\setminus\Omega)} \notag \\
		&\le Ch \|v + v_h\|_{V_h} + Ch \|v_h - E_hv_h\|_{V_h} + \|v_h - E_hv_h\|_{L^2(\Omega_h\setminus\Omega)}. \label{eq: estimate 1 for v - Ev}
\end{align}
Below we estimate the second and third terms in the right-hand side.

Since $v_h$ and $E_hv_h$ are linear for $x \in T \in \mathcal T_h$ we obtain the expression
\begin{equation*}
	v_h(x) - E_hv_h(x) = \sum_{p \in \mathcal V_h(T)} \Big( \frac1{\#\mathcal T_h(p)} \sum_{T' \in \mathcal T_h(p)} (v_h|_T - v_h|_{T'})(p) \Big) \bar\phi_p(x),
\end{equation*}
where $\mathcal V_h(T) := \mathcal V_h \cap T$ means the vertices of $T$.
Here, discontinuity at $p$ can be estimated by that across edges near $p$, that is, $\big| (v_h|_T - v_h|_{T'})(p) \big| \le \sum_{e \in \mathring{\mathcal E}_h(p)} \|\jump{v_h}\|_{L^\infty(e)}$ where $\mathring{\mathcal E}_h(p) = \{e \in \mathring{\mathcal E}_h \,:\, p \in e\}$ stands for the interior edges sharing the vertex $p$ (cf.\ \cite[p.\ 1073]{Bre03}).
Therefore,
\begin{align*}
	\|\nabla(v_h - E_hv_h)\|_{L^2(T)}^2 &\le C \sum_{e \in \mathring{\mathcal E}_h(T)} \|\jump{v_h}\|_{L^\infty(e)}^2 \sup_{p\in\mathcal V_h(T)} \|\nabla\bar\phi_p\|_{L^2(T)}^2 \\
		&\le C \sum_{e \in \mathring{\mathcal E}_h(T)} h_e^{-N+1} \|\jump{v_h}\|_{L^2(e)}^2 \times Ch_T^{N-2} \\
		&\le C \sum_{e \in \mathring{\mathcal E}_h(T)} h_e^{-1} \|\jump{v_h}\|_{L^2(e)}^2,
\end{align*}
where $\mathring{\mathcal E}_h(T) = \{ e \in \mathring{\mathcal E}_h \,:\, e \subset T \}$ means the faces of $T$ that are inside $\Omega_h$.
$\|v_h - E_hv_h\|_{L^2(T)}$ can be estimated in a similar manner, and adding these estimates for $T \in \mathcal T_h$ yields
\begin{equation} \label{eq: estimate 2 for v - Ev}
	\|v_h - E_hv_h\|_{V_h} \le C \Big( \sum_{e \in \mathring{\mathcal E}_h} h_e^{-1} \|\jump{v_h}\|_{L^2(e)}^2 \Big)^{1/2}.
\end{equation}

For the third term one has
\begin{align*}
	\|v_h - E_hv_h\|_{L^2(\Omega_h\setminus\Omega)}^2 \le \sum_{e \in \mathcal E_h^\partial} |T_e\setminus\Omega| \|v_h - E_hv_h\|_{L^\infty(T_e)}^2,
\end{align*}
where $|T_e\setminus\Omega|$ denotes the $N$-dimensional measure of $T_e\setminus\Omega$ and is bounded by $C h_e^{N-1}\delta_e$.  It follows that
\begin{equation*}
	\|v_h - E_hv_h\|_{L^\infty(T_e)}^2 \le C \sum_{e' \in \mathring{\mathcal E}_h(T_e)} \|\jump{v_h}\|_{L^\infty(e')}^2 \sup_{p\in\mathcal V_h(T_{e'})} \|\bar\phi_p\|_{L^\infty(T_e)}^2 \le C \sum_{e' \in \mathring{\mathcal E}_h(T_e)} h_{e'}^{-N+1} \|\jump{v_h}\|_{L^2(e')}^2.
\end{equation*}
We thus obtain
\begin{equation} \label{eq: estimate 3 for v - Ev}
	\|v_h - E_hv_h\|_{L^2(\Omega_h\setminus\Omega)} \le C \Big( h\delta \sum_{e \in \mathring{\mathcal E}_h} h_e^{-1} \|\jump{v_h}\|_{L^2(e)}^2 \Big)^{1/2} \le Ch^{3/2} \Big( \sum_{e \in \mathring{\mathcal E}_h} h_e^{-1} \|\jump{v_h}\|_{L^2(e)}^2 \Big)^{1/2}.
\end{equation}
Combining \eref{eq: estimate 1 for v - Ev}--\eref{eq: estimate 3 for v - Ev} and noting that $[v] = 0$ on each $e \in \mathring{\mathcal E}_h$, we conclude the desired estimate.

\begin{rem}
	\lref{lem: a boundary-skin estimate for nonconforming approximation} holds for general discontinuous P1 functions as well, because we did not use the continuity at midpoints in the proof.
\end{rem}

\section*{Acknowledgements}
We would like to thank Professor Masahisa Tabata for his valuable comments which prompted us to initiate the present study.


\begin{thebibliography}{10}

\bibitem{BaDe99}
{\sc E.~B\"ansch and K.~Deckelnick}, {\em Optimal error estimates for the
  {S}tokes and {N}avier-{S}tokes equations with slip-boundary condition},
  Math.\ Mod.\ Numer.\ Anal., 33 (1999), pp.~923--938.

\bibitem{BdV04}
{\sc H.~{B}eir\~ao~da Veiga}, {\em Regularity for {S}tokes and generalized
  {S}tokes systems under nonhomogeneous slip-type boundary conditions}, Adv.
  Differential Equations, 9 (2004), pp.~1079--1114.

\bibitem{BHP05}
{\sc C.~Bernardi, F.~Hecht, and O.~Pironneau}, {\em Coupling {D}arcy and
  {S}tokes equations for porus media with cracks}, ESAIM: Math.\ Mod.\ Numer.\
  Anal., 39 (2005), pp.~7--35.

\bibitem{BBF13}
{\sc D.~Boffi, F.~Brezzi, and M.~Fortin}, {\em Mixed Finite Element Methods and
  Applications}, Springer, 2013.

\bibitem{Bre03}
{\sc S.~C. Brenner}, {\em Korn's inequalities for piecewise ${H}^1$ vecor
  fields}, Math.\ Comp., 73 (2003), pp.~1067--1087.

\bibitem{BrSc07}
{\sc S.~C. Brenner and L.~R. Scott}, {\em The mathematical theory of finite
  element methods}, Springer, 3rd~ed., 2007.

\bibitem{BSZ14}
{\sc S.~C. Brenner, L.-Y. Sung, and Y.~Zhang}, {\em A quadratic ${C}^0$
  interior penalty method for an elliptic optimal control problem with state
  constraints}, in Recent Developments in Discontinuous Galerkin Finite Element
  Methods for Partial Differential Equations, X.~F. et~al., ed., 2014,
  pp.~97--132.

\bibitem{BuHa05}
{\sc E.~Burman and P.~Hansbo}, {\em Stabilized {C}rouzeix--{R}aviart element
  for the {D}arcy--{S}tokes problem}, Numer.\ Meth.\ PDE, 21 (2005),
  pp.~986--997.

\bibitem{CaLi09}
{\sc A.~\c{C}a\u{g}lar and A.~Liakos}, {\em Weak imposition of boundary
  conditions for the {N}avier-{S}tokes equations by a penalty method}, Int.\ J.
  Numer.\ Meth.\ Fluids, 61 (2009), pp.~411--431.

\bibitem{Cia78}
{\sc P.~G. Ciarlet}, {\em The Finite Element Method for Elliptic Problems},
  North-Holland, 1978.

\bibitem{CrRa73}
{\sc M.~Crouzeix and P.-A. Raviart}, {\em Conforming and nonconforming finite
  element methods for solving the stationary {S}tokes equations {I}},
  R.A.I.R.O. Numer.\ Anal., 7 (1973), pp.~33--76.

\bibitem{DiUr15}
{\sc I.~Dione and J.~Urquiza}, {\em Penalty: finite element approximation of
  {S}tokes equations with slip boundary conditions}, Numer.\ Math., 129 (2015),
  pp.~587--610.

\bibitem{GiTr98}
{\sc D.~Gilbarg and N.~S. Trudinger}, {\em Elliptic Partial Differential
  Equations of Second Order}, Springer, 1998.

\bibitem{KaKe18}
{\sc T.~Kashiwabara and T.~Kemmochi}, {\em ${L}^\infty$- and
  ${W}^{1,\infty}$-error estimates of linear finite element method for
  {N}eumann boundary value problem in a smooth domain}, arXiv:1804.00390.

\bibitem{KOZ16}
{\sc T.~Kashiwabara, I.~Oikawa, and G.~Zhou}, {\em Penalty method with
  {P}1/{P}1 finite element approximation for the {S}tokes equations under the
  slip boundary condition}, Numer.\ Math., 134 (2016), pp.~705--740.

\bibitem{Kno99}
{\sc P.~Knobloch}, {\em Variational crimes in a finite element discretization
  of 3{D} {S}tokes equations with nonstandard boundary conditions}, East-West
  J. Numer.\ Math., 7 (1999), pp.~133--158.

\bibitem{LSY03}
{\sc W.~Layton, F.~Schieweck, and I.~Yotov}, {\em Coupling fluid flow with
  porous media flow}, SIAM J. Numer.\ Anal., 40 (2003), pp.~2195--2218.

\bibitem{fenics}
{\sc A.~Logg, K.-A. Mardal, G.~N. Wells, et~al.}, {\em Automated Solution of
  Differential Equations by the Finite Element Method}, Springer, 2012.

\bibitem{Ver87}
{\sc R.~Verf\"urth}, {\em Finite element approximation of incompressible
  {N}avier-{S}tokes equations with slip boundary condition}, Numer.\ Math., 50
  (1987), pp.~697--721.

\bibitem{ZKO16}
{\sc G.~Zhou, T.~Kashiwabara, and I.~Oikawa}, {\em Penalty method for the
  stationary {N}avier--{S}tokes problems under the slip boundary condition}, J.
  Sci.\ Comput., 68 (2016), pp.~339--374.

\bibitem{ZKO17}
\leavevmode\vrule height 2pt depth -1.6pt width 23pt, {\em A penalty method for
  the time-dependent {S}tokes problem with the slip boundary condition and its
  finite element approximation}, Appl.\ Math., 62 (2017), pp.~377--403.

\end{thebibliography}

\end{document}